\newtheorem{thm}{Theorem}[section]
\newtheorem{cor}[thm]{Corollary}
\newtheorem{lem}[thm]{Lemma}
\newtheorem{defi}[thm]{Definition}
\theoremstyle{remark}
\numberwithin{equation}{section}
\def\diver{\mathop{\text{\normalfont div}}}
\def\Dom{\mathop{\text{\normalfont Dom}}}
\newcommand{\R}{\mathbb{R}}
\newcommand{\N}{\mathbb{N}}
\newcommand{\G}{\mathcal{G}}
\newcommand{\M}{\mathcal{M}}
\newcommand{\ve}{\varepsilon}
\newcommand{\LL}{\mathcal{L}}
\newcommand{\average}{{\mathchoice {\kern1ex\vcenter{\hrule height.4pt
width 6pt depth0pt} \kern-9.7pt} {\kern1ex\vcenter{\hrule
height.4pt width 4.3pt depth0pt} \kern-7pt} {} {} }}
\def\R{\mathbb{R}}
\begin{document}

\title[$m-$Laplacian without the $\Delta_2$ condition]{Ljusternik-Schnirelmann eigenvalues for the fractional $m-$Laplacian without the $\Delta_2$ condition}

\author{Juli\'an Fern\'andez Bonder}

\address[JFB]{Instituto de C\'alculo -- CONICET and
Departamento de Matem\'atica, FCEN -- Universidad de Buenos Aires. 
Ciudad Universitaria, Pabell\'on I, C1428EGA, Av. Cantilo s/n
Buenos Aires, Argentina}

\email{jfbonder@dm.uba.ar}

\author{Juan F. Spedaletti}

\address[JS]{Departamento de Matem\'atica, FCFMyN, Universidad Nacional de San
Luis \hfill\break\indent Instituto DE Matem\'atica Aplicada San
Luis, IMASL, CONICET. \hfill\break\indent Italia avenue 1556,office
5, San Luis (5700), San Luis, Argentina.}

\email{jfspedaletti@unsl.edu.ar}

\subjclass[2020]{35J62; 35P30; 46E30}

\keywords{Orlicz spaces, nonlinear eigenvalues, asymptotic behavior}

\begin{abstract}
In this work we analyze the eigenvalue problem associated to the fractional $m-$Laplacian, defined as
$$
(-\Delta_m)^s u(x):=2\text{p.v.}\int_{\R^n} m\left(\frac{|u(x)-u(y)|}{|x-y|^s}\right)\frac{(u(x)-u(y))}{|u(x)-u(y)|}\frac{dy}{|x-y|^{n+s}},
$$
This operator serves as a model for nonlocal, nonstandard growth diffusion problems. In contrast to previous analyses, we explore the eigenvalue problem without presuming the $\Delta_2$ condition on $M$ -- the primitive function of $m$. Our results show the existence of a sequence of eigenvalues $\lambda_k\to\infty$. This research contributes to advancing our understanding of nonlocal diffusion models, specifically those characterized by the fractional $m-$Laplacian, by relaxing the constraints imposed by the $\Delta_2$ condition.
\end{abstract}

\maketitle
\tableofcontents

\section{Introduction and main results}\label{Intro}

Eigenvalue problems stand as some of the most extensively investigated challenges within Partial Differential Equations. This interest arises both from their innate relevance to a wide array of natural phenomena, spanning vibrating membranes, quantum physics, and signal processing, among others, and from their intrinsic significance. Consider an open and bounded domain $\Omega\subset \R^n$. The classical Courant minimax principle ensures the existence of an infinite sequence of eigenvalues $\{\lambda_k\}_{k\in \N}$ for the classical Dirichlet eigenvalue problem:
$$
\begin{cases}
-\Delta u = \lambda u & \text{in }\Omega\\
u=0 & \text{in } \partial\Omega,
\end{cases}
$$
where $\lambda_k\to\infty$ (refer to \cite{TaLa} and \cite{Zei1}). The applications of this problem extend across various branches of mathematics and natural sciences.

In recent times, attention has shifted to nonlinear extensions and variations of the eigenvalue problem associated with the Laplacian. One of the most extensively studied is the eigenvalue problem for the $p-$Laplacian:
$$
\begin{cases}
-\Delta_p u = \lambda |u|^{p-2}u & \text{in }\Omega\\
u=0 & \text{in } \partial\Omega,
\end{cases}
$$
introduced by \cite{L} (also discussed in \cite{L2, L3}). For this problem,  the classical Ljusternik--Schnirelmann theory applied to the functionals $F$ and $G$ defined on $W_0^{1,p}(\Omega)$:
$$
F,\,G\colon W_0^{1,p}(\Omega)\to\R
$$ 
$$
F(u)=\int_\Omega |\nabla u|^p\,dx\qquad  \text{ and }\qquad G(u)=\int_\Omega|u|^p\,dx
$$
yields a sequence of eigenvalues $\{\lambda_k\}_{k\in\N}$ with $\lambda_k\to\infty$. Crucially, in this case, the space $W_0^{1,p}(\Omega)$ must be reflexive and separable, and the corresponding functionals $F$ and $G$ must be differentiable (see \cite{FuNe,FuNeSoSo,Zei1}).

Another interesting nonlinear eigenvalue problem arises with the $m-$Laplacian operator, defined as
$$
\Delta_m u = \diver\left(\frac{m(|\nabla u|)}{|\nabla u|}\nabla u\right),
$$
where $m\colon \R_+\to\R_+$ is a nondecreasing function. This operator generalizes the $p-$Laplacian operator when $m(t)=t^{p-1}$.

Consequently, the eigenvalue problem is given by
\begin{equation}\label{intro.1}
\begin{cases}
-\Delta_m u=\lambda g(u)& \text{in }\Omega\\
u=0 & \text{in }\partial\Omega.
\end{cases}
\end{equation}
Here, the function $g\colon \R\to \R$ satisfies certain growth conditions. What makes these operators particularly appealing for applications is the potential for distinct behaviors in diffusivity when $|\nabla u|\ll 1$ and $|\nabla u|\gg 1$, a phenomenon known in the literature as {\em nonstandard growth} elliptic operators. See \cite{Lieberman}.

A key factor in addressing such problems is the primitive function of $m$, denoted as $M(t)=\int_0^t m(s)\,ds$. When $M$ satisfies the so--called $\Delta_2-$condition, that is
$$
M(2t)\leq C M(t),
$$
for some constant $C>1$ and all $t\geq T_0$, then the eigenvalue problem \eqref{intro.1} inherits many properties from the $p-$Laplacian case. In such instances, the Ljusternik--Schnirelmann theory can be applied seamlessly to establish the existence of a sequence of eigenvalues for \eqref{intro.1}.

However, when $M$ does not satisfy the $\Delta_2-$condition, the situation becomes significantly more intrincate. In \cite{Ti}, the author analyzed problem \eqref{intro.1} and using ideas from \cite{Gossez} and employing a Galerkin--based approximation method, the author successfully overcome the absence of the $\Delta_2-$condition. Subsequently, the Ljusternik--Schinrelmann theory was applied, resulting in the identification of an infinite sequence of eigenvalues for \eqref{intro.1}.

\medskip

In recent years, nonlocal diffusion models have garnered considerable attention due to their diverse and novel applications in the natural sciences. These operators naturally arise in the context of stochastic L\'evy processes with jumps and have been extensively investigated from both probabilistic and analytical perspectives. Its applications range from physics, where it describes nonlocal interactions in materials, to finance, where it captures the memory effect in stochastic processes, and to image processing and ecology, where it accounts for spatial interactions over long distances, as documented in works such as \cite{Ap04, CT16, ST94} and the references therein. For the mathematical background from the partial differential equation (PDE) perspective adopted in this paper, readers can refer to \cite{BV, Garofalo}.

Arguably, one of the most significant nonlocal operators is the fractional Laplacian, defined as
$$
(-\Delta)^s u(x)=\text{p.v.}\int_{\R^n}\frac{u(x)-u(y)}{|x-y|^{n+2s}}\,dy, 
$$
where $s\in (0,1)$ is the fractional parameter. The associated eigenvalue problem takes the form
$$
\begin{cases}
(-\Delta)^s u = \lambda u & \text{in }\Omega\\
u=0 & \text{in } \R^n\setminus\Omega,
\end{cases}
$$
which can be analyzed using standard methods of functional analysis. This fractional Laplacian has proven to be a powerful tool in capturing nonlocal interactions and long-range dependencies, making it an invaluable tool for modeling phenomena characterized by anomalous diffusion.

In the realm of nonlocal diffusion models, numerous nonlinear generalizations of the fractional Laplacian eigenvalue problem have been explored in the literature. One particularly well-studied extension is encapsulated by the fractional $p-$Laplacian operator, defined as
$$
(-\Delta_p)^s u(x)=\text{p.v.}\int_{\R^n}\frac{|u(x)-u(y)|^{p-2}(u(x)-u(y))}{|x-y|^{n+2s}}\,dy.
$$
It is noteworthy that when $p=2$, the fractional $p-$Laplacian reduces to the standard fractional Laplacian.

The eigenvalue problem associated with the fractional $p-$Laplacian has been a subject of investigation by various authors in recent years. Notable contributions include works by \cite{BPS, FBSS, FP}, among others. This line of research delves into understanding the spectral properties and the behavior of solutions for this nonlinear nonlocal eigenvalue problem. The fractional $p-$Laplacian offers a versatile framework that extends the capabilities of the standard fractional Laplacian by incorporating additional nonlinearity through the power $p$. These developments hold promise for applications in modeling complex phenomena where both nonlocal interactions and nonlinear effects play crucial roles. The exploration of such nonlocal operators enriches the mathematical tools available for describing a wide range of phenomena in different scientific disciplines.

In \cite{FBS}, the authors introduced a fractional counterpart of the $m-$Laplacian, offering a model for nonlocal, nonstandard growth diffusion problems. Specifically, for an increasing and continuous function $m(t)$, the fractional $m-$Laplacian operator is defined as
$$
(-\Delta_m)^s u(x) = \text{p.v.} \int_{\R^n} m\left(\frac{|u(x)-u(y)|}{|x-y|^s}\right)\frac{(u(x)-u(y))}{|u(x)-u(y)|}\frac{dy}{|x-y|^{n+s}}.
$$
It is worth noting that when $m(t)=t^{p-1}$, this fractional $m-$Laplacian reduces to the fractional $p-$Laplacian. Subsequent to the pioneering work of \cite{FBS}, numerous studies exploring this operator have emerged, as evidenced by works such as \cite{ABC, FBSV2, MSV}, and references therein.

The associated eigenvalue problem for the fractional $m-$Laplacian is given by
\begin{equation}\label{intro.2}
\begin{cases}
(-\Delta_m)^s u = \lambda g(u) & \text{in }\Omega\\
u=0 & \text{in } \R^n\setminus\Omega.
\end{cases}
\end{equation}
For an alternative eigenvalue problem associated with this operator, readers are directed to \cite{FBSV1}.

Previous studies of Problem \eqref{intro.2}, such as those found in \cite{BOS, S}, assumed the $\Delta_2-$condition on $M(t)$. Notably, \cite{SV} stands as the sole work known to address \eqref{intro.2} without imposing the $\Delta_2-$condition, demonstrating the existence of a first eigenvalue for this problem.

Thus, the primary focus of this article lies in the investigation of Problem \eqref{intro.2} without relying on the $\Delta_2-$condition for the function $M(t)$. Our main result can be succinctly summarized as follows: 
\begin{thm}
Under suitable assumptions on $\Omega$, $m(t)$, and $g(t)$ {\em without necessitating the $\Delta_2-$condition on $M(t)$} there exists a sequence $\{\lambda_k\}_{k\in\mathbb{N}}$ of eigenvalues for \eqref{intro.2}. Moreover, $\lambda_k\to\infty$ as $k\to\infty$.
\end{thm}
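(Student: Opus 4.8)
The plan is to transplant to the nonlocal, non-differentiable setting the Galerkin--based Ljusternik--Schnirelmann scheme of \cite{Ti} (itself resting on \cite{Gossez}), since the fractional Orlicz--Sobolev framework of \cite{FBS} parallels the local one closely enough that the obstruction produced by the failure of the $\Delta_2$ condition --- and the device that circumvents it --- are the same. Write $M(t)=\int_0^t m(r)\,dr$, let
$$
J(u)=\intr M\!\left(\frac{|u(x)-u(y)|}{|x-y|^s}\right)\frac{dx\,dy}{|x-y|^{n}}
$$
be the Gagliardo modular on the fractional Orlicz--Sobolev space $W^{s,M}_0(\Omega)$ of \cite{FBS}, let $G(u)=\int_\Omega\Phi(u)\,dx$ with $\Phi'=g$ and $\Phi(0)=0$, and put $\mathcal{C}=\{u:G(u)=1\}$, a set that avoids the origin. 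Then \eqref{intro.2} is the Euler--Lagrange problem for $J$ constrained to $\mathcal{C}$, its eigenvalues being, as usual, the critical values of $J|_{\mathcal{C}}$. The reason the Ljusternik--Schnirelmann machinery cannot be run on $\mathcal{C}$ directly is precisely the failure of $\Delta_2$: then $W^{s,M}_0(\Omega)$ is neither reflexive nor separable, $J$ is not everywhere finite on it and not norm-continuous, and $\mathcal{C}$ need not be a $C^1$ Finsler manifold, so deformation lemmas and the Palais--Smale apparatus are unavailable.

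\emph{Step 1: Galerkin approximation.} I would fix the separable subspace $E\subset W^{s,M}_0(\Omega)$ given by the closure of the bounded functions --- on which $J$ is finite, convex and continuous --- together with an increasing chain of finite-dimensional subspaces $X_1\subset X_2\subset\cdots$ with $\overline{\bigcup_n X_n}=E$. On each $X_n$ all norms are equivalent, so $J|_{X_n}$ and $G|_{X_n}$ are continuous, $\mathcal{C}_n:=\mathcal{C}\cap X_n$ is a compact symmetric $C^1$ manifold, and the finite-dimensional Ljusternik--Schnirelmann theorem applies: for each $k\le\dim X_n$ the minimax value
$$
\lambda_{k,n}=\inf_{A\in\Gamma_{k,n}}\ \sup_{u\in A} J(u),\qquad \Gamma_{k,n}=\{A\subset\mathcal{C}_n:\ A=-A,\ A\ \text{compact},\ \mathrm{genus}(A)\ge k\},
$$
is a critical value of $J|_{\mathcal{C}_n}$, attained at some $u_{k,n}\in\mathcal{C}_n$ solving the Galerkin form of \eqref{intro.2}. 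Since $X_n\subset X_{n+1}$ forces $\Gamma_{k,n}\subset\Gamma_{k,n+1}$, the map $n\mapsto\lambda_{k,n}$ is nonincreasing, while the embedding inequalities of \cite{FBS} ensure $\lambda_{k,n}\ge\lambda_{1,n}\ge\inf_{\mathcal{C}}J=:\lambda_1>0$ uniformly in $k$ and $n$; hence $\lambda_{k,n}\downarrow\lambda_k\ge\lambda_1$ as $n\to\infty$.

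\emph{Step 2: passage to the limit.} Fix $k$. From $J(u_{k,n})=\lambda_{k,n}$ (nonincreasing in $n$) the modular of $u_{k,n}$ is bounded, hence --- using only the convexity of $M$, not $\Delta_2$ --- so is its Luxemburg norm; the compact embedding of \cite{FBS} then yields a subsequence with $u_{k,n}\to u_k$ in a subcritical Orlicz space $L^{B}(\Omega)$ and a.e., so that, by the subcritical growth of $g$, $G(u_{k,n})\to G(u_k)=1$ and the limit $u_k\in\mathcal{C}$ does \emph{not} degenerate. To identify the limit equation I would test the Galerkin equation against a fixed $v\in X_m$ and pass to the limit exploiting the convexity of $J$ and a Minty--Browder monotonicity argument in the spirit of \cite{Gossez,Ti}, obtaining that $u_k$ solves the weak form of \eqref{intro.2} with $\lambda=\lambda_k$, first for all $v\in\bigcup_m X_m$ and then, by density, for all admissible $v$; thus $u_k$ is an eigenfunction of \eqref{intro.2} with eigenvalue $\lambda_k$. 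I expect this identification step --- passing a nonlinear operator that lacks $\Delta_2$ to the limit while keeping the constraint non-degenerate --- to be the main obstacle, precisely because modular convergence does not control the Luxemburg norm; working inside $E$ and substituting convexity and monotonicity for differentiability is what makes it go through.

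\emph{Step 3: divergence of the eigenvalues.} Suppose, for contradiction, that $\lambda_k\le\Lambda$ for all $k$, and fix $C>\Lambda$. Then for each $k$ there are $n$ and $A\in\Gamma_{k,n}$ with $\sup_A J<C$, so the sublevel set $\mathcal{S}:=\{u\in\mathcal{C}:J(u)\le C\}$ contains, for every $k$, a symmetric compact subset of genus $\ge k$, whence $\mathrm{genus}(\mathcal{S})=\infty$. On the other hand $\mathcal{S}$ is modular-bounded, hence bounded in $W^{s,M}_0(\Omega)$ and precompact in $L^{B}(\Omega)$; its closure in $L^{B}(\Omega)$ is symmetric, compact and bounded away from $0$ (because $G\equiv 1$ on $\mathcal{S}$ and $G$ is continuous on $L^{B}(\Omega)$), hence of finite genus, and a map continuous for the $L^{B}$-topology is continuous for the finer $W^{s,M}_0(\Omega)$-norm, so $\mathrm{genus}(\mathcal{S})<\infty$ --- a contradiction. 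Therefore $\lambda_k\to\infty$, which completes the proof. Beyond the identification step of Step 2, the one point that genuinely departs from the $p$-Laplacian and fractional $p$-Laplacian treatments of \cite{FBSS,BPS} is securing, without the $\Delta_2$ condition, the compactness and lower semicontinuity invoked above.
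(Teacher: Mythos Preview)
Your overall strategy --- Galerkin approximation, finite-dimensional Ljusternik--Schnirelmann, then a monotonicity-based passage to the limit --- is the one the paper follows (adapted from \cite{Ti}), and your genus/compactness argument in Step~3 is a reasonable alternative to the paper's use of Tienari's abstract projector theorem. But there is a real gap, and it originates in the sentence ``its eigenvalues being, as usual, the critical values of $J|_{\mathcal C}$.''

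That identification holds for homogeneous functionals (the $p$-Laplacian and fractional $p$-Laplacian cases you cite) because there $J'(u)\cdot u=pJ(u)$ and $G'(u)\cdot u=pG(u)$, so the Lagrange multiplier equals $J(u)/G(u)$. In the Orlicz setting it is false: the Euler--Lagrange equation on $\mathcal C_n$ reads $J'(u_{k,n})=\mu_{k,n}\,G'(u_{k,n})$ for a Lagrange multiplier $\mu_{k,n}$ that bears no algebraic relation to the critical value $\lambda_{k,n}=J(u_{k,n})$. Your Step~2 never names or bounds $\mu_{k,n}$ (the paper devotes a separate contradiction argument, via monotonicity, to exactly this point), and your Step~3 proves only that the \emph{critical values} $\lambda_k\to\infty$, i.e.\ that $\|u_k\|\to\infty$. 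To conclude that the \emph{eigenvalues}
\[
\mu_k=\frac{\displaystyle\iint m(D^su_k)\,D^su_k\,d\nu_n}{\displaystyle\int_\Omega g(u_k)\,u_k\,dx}
\]
diverge you would need a uniform bound on the denominator, and the constraint $G(u_k)=1$ does not provide one without a $\Delta_2$-type hypothesis on $\Phi$ --- precisely what you set out to avoid. (There is also a secondary issue: with only the one-sided growth bound \eqref{g} on $g$, nothing forces $\Phi$ to be coercive, so $\mathcal C$ and $\mathcal C_n$ may fail to be bounded, or even nonempty.)

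The paper sidesteps all of this by choosing the \emph{opposite} normalization: it constrains the modular $\mathcal M_s(u)=1$ --- so the eigenfunctions are automatically bounded in $W^s_0L_M(\Omega)$ and hence precompact in $E_M(\Omega)$ --- and takes the sup--inf of $\mathcal G$ over sets of prescribed genus. Using Tienari's projectors it shows the critical levels $c_i\to 0$, hence $\bar u_i\to 0$ in $E_M(\Omega)$, hence $\int_\Omega g(\bar u_i)\bar u_i\,dx\to 0$; the eigenvalue identity then gives
\[
\bar\lambda_i=\frac{\displaystyle\iint m(D^s\bar u_i)\,D^s\bar u_i\,d\nu_n}{\displaystyle\int_\Omega g(\bar u_i)\,\bar u_i\,dx}\ \ge\ \frac{\mathcal M_s(\bar u_i)}{\displaystyle\int_\Omega g(\bar u_i)\,\bar u_i\,dx}=\frac{1}{\displaystyle\int_\Omega g(\bar u_i)\,\bar u_i\,dx}\ \longrightarrow\ \infty.
\]
Which functional you constrain is not cosmetic here: the paper's choice is what makes the link between critical levels and Lagrange multipliers go through without homogeneity.
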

For a precise and detailed statement of this result, please refer to Theorem \ref{teo.main} in Section \ref{section.4}. This contribution marks a significant advancement in our understanding of nonlocal eigenvalue problems, specifically those associated with the fractional $m-$Laplacian, by extending the analysis beyond the constraints of the $\Delta_2-$condition.

\section{Preliminaries}\label{prel}

In this section we present some preliminary definitions needed for the rest of the paper. The first subsection is well know and does not contain any new result being the book \cite{KR} the standard reference for the subject. The second subsection contains the definitions and basic results regarding fractional Orlicz-Sobolev spaces. See for instance \cite{BS} where these spaces were introduced and \cite{ACPS,ACPS2} where several properties of these spaces were analyzed whitout requiring the $\Delta_2-$condition. The third subsection recall the definition of complemantary pairs intruduced in \cite{Gossez} and construct a complementary pair in the context of fractional Orlicz-Sobolev spaces. Finally in the last subsection we recall an abstract result due to \cite{Ti} that will be helpful in the sequel.

\subsection{Young functions and Orlicz spaces}

Let $M\colon \R\to\R$ be a function, such that $M$ is even, convex and continuous,  $M(t)>0$ for $t>0$, $M(t)/t\to 0$ as $t\to 0$ and $M(t)/t\to\infty$ as $t\to\infty$. Such a function $M$ is called a {\em Young function} if it can be written as
$$
M(t) = \int_0^{|t|} m(s)\, ds,
$$
for $m\colon [0,\infty)\to [0,\infty)$ increasing, right continuous, $m(t)=0$ if and only if $t=0$ and $m(t)\to\infty$ as $t\to\infty$.

It will be helpful to extend the function $m$ to the entire real line by oddness, that is
$$
m(t)=\frac{m(|t|)}{|t|}t.
$$
We recall now some basic definitions on Orlicz spaces that can be found, for instance, in \cite{KR}.

Let $U\subset \R^N$ be a bounded domain and let $\mu$ be a Borel measure in $U$. The Orlicz class $\LL_M(U, d\mu)$ is defined as
$$
\LL_M(U,d\mu) := \left\{u\colon U\to \R,\ \text{measurable}\colon \int_U M(u)\, d\mu < \infty\right\}.
$$
The Orlicz space $L_M(U,d\mu)$ is then define as the linear hull of $\LL_M(U,d\mu)$. It follows that $L_M(U,d\mu)$ can be characterized as
$$
L_M(U,d\mu) = \left\{ u\colon U\to \R,\ \text{$\mu-$measurable}\colon \int_U M\left (\frac{u}{k}\right )\, d\mu < \infty, \text{ for some } k>0\right\}.
$$
This space is a Banach space when it is equipped, for instance, with the {\em Luxemburg norm}, i.e.
$$
\|u\|_{L_M(U,d\mu)} = \|u\|_{M,U,d\mu} = \|u\|_{M,d\mu} := \inf\left\{ k>0\colon \int_U M\left(\frac{u}{k}\right)\, d\mu \leq 1\right\}.
$$
A well known and interesting fact is that $\LL_M(U,d\mu) = L_M(U,d\mu)$ if and only if $M$ satisfies the so-called $\Delta_2-$condition, i.e.
\begin{equation}\label{Delta2}
M(2t)\le CM(t),\quad \text{for } t\ge T.
\end{equation}
Also, the Orlicz space $L_M(U,d\mu)$ is separable, if and only if $M$ satisfies \eqref{Delta2}.

Next, we define the space $E_M(U,d\mu)$ as the closure of bounded $\mu-$measurable functions in $L_M(U,d\mu)$, in the case $\mu(U)=\infty$ the space $E_M(U,d\mu)$ is the closure in $L_M(U,d\mu)$ of bounded $\mu-$measurable functions with bounded support. Again, $E_M(U,d\mu)=L_M(U,d\mu)$ if and only if $M$ satisfies \eqref{Delta2}.

So, in general, we have
$$
E_M(U,d\mu)\subset \LL_M(U,d\mu)\subset L_M(U,d\mu),
$$
with equalities if and only if $M$ satisfies \eqref{Delta2}.

Observe that $E_M(U,d\mu)$ and $L_M(U,d\mu)$ are Banach spaces and $\LL_M(U,d\mu)$ is a convex set.

\bigskip

Given a Young function $M$, we define its complementary function $\bar M$ as
$$
\bar M(t) := \sup\{ \tau|t|-M(\tau)\colon \tau\geq 0\}.
$$
Observe that $\bar M$ is also a Young function and is the optimal function in the Young inequality
\begin{equation}\label{young}
\tau t\le M(t) + \bar M(\tau),
\end{equation}
for all $\tau,t$. Observe that equality in \eqref{young} is achieved if and only if $\tau=m(t)$ sign $t$ or $t=\bar m(\tau)$ sign $\tau$ where $\bar{m}(t)$ is the derivative of $\bar{M}(t)$. 

It follows directly from \eqref{young} that if $u\in L_M(U,d\mu)$ and $v\in L_{\bar M}(U,d\mu)$, then $uv\in L^1(U)$ and
$$
\int_U |uv|\, d\mu \le 2 \|u\|_M \|v\|_{\bar M}.
$$
This fact allows one to define in $L_M(U,d\mu)$ the topology $\sigma(L_M,L_{\bar M})$ and it follows that $E_M(U,d\mu)$ is dense in $L_M(U,d\mu)$ in this topology.

It is easy to check that $\bar{\bar M} = M$. The Orlicz space $L_{\bar M}(U,d\mu)$ is the dual space of $E_M(U,d\mu)$ and so $L_M(U,d\mu)$ is reflexive if and only if $M$ and $\bar M$ satisfy \eqref{Delta2}.

Finally, given $M$ a Young function, we define
\begin{equation}\label{Domm}
\Dom(m) := \{u\in L_M(U,d\mu)\colon m(|u|)\in L_{\bar M}(U,d\mu)\}.
\end{equation}
It can be checked that $E_M(U,d\mu)\subset \Dom(m)\subset \LL_M(U,d\mu)$ and hence, $\Dom(m)=L_M(U,d\mu)$ if and only if $M$ satisfies \eqref{Delta2}. Moreover, the map $u\mapsto m(u)$ from $E_M(U,d\mu)$ to $L_{\bar M}(U,d\mu)$ is continuous if and only if $\bar M$ satisfies \eqref{Delta2}.

\subsection{Fractional Orlicz-Sobolev spaces}
In the product space $\R^n\times\R^n = \R^{2n}$ we define de measure
$$
d\nu_n := \frac{dxdy}{|x-y|^n}.
$$
Observe that this is a Borel measure and that if $K\subset \R^{2n}\setminus \Delta$ is compact, then $\nu_n(K)<\infty$, where $\Delta\subset \R^n\times\R^n$ is the diagonal $
\Delta := \{(x,x)\colon x\in\R^n\}$.

We will consider two Orlicz spaces $L_M(U,d\mu)$. One with $U=\R^n$ and $d\mu=dx$ (the Lebesgue measure) and other with $U=\R^{2n}$ and $d\mu = d\nu_n$. 

We will use the notations
$$
\LL_M = \LL_M(\R^n, dx), \qquad L_M = L_M(\R^n, dx),\qquad E_M = E_M(\R^n, dx);
$$
$$
\LL_M(\nu_n) = \LL_M(\R^{2n}, d\nu_n), \quad L_M(\nu_n) = L_M(\R^{2n}, d\nu_n),\quad E_M(\nu_n) = E_M(\R^{2n}, d\nu_n).
$$
Now, given a fractional parameter $s\in (0,1)$, we introduce the notation for the H\"older quotient of a function $u\colon\R^n\to \R$.
$$
D^su(x,y) := \frac{u(x)-u(y)}{|x-y|^s}.
$$
Then $D^su\colon \R^{2n}\setminus \Delta\to \R$.

Now, with all the notation introduced, the fractional Orlicz-Sobolev spaces are defined as
$$
W^sL_M := \{u\in L_M\colon D^s u\in L_M(\nu_n)\}
$$
and
$$
W^sE_M :=\{u\in E_M\colon D^su\in E_M(\nu_n)\}.
$$
These spaces are naturally equipped with the norms
$$
\|u\|_{s,M} = \|u\|_M + \|D^su\|_{M,\nu_n}.
$$
Also, these spaces can be isometrically identified as closed subspaces of $L_M\times L_M(\nu_n)$ and $E_M\times E_M(\nu_n)$ respectively using the map
$$
u\mapsto (u, D^su).
$$
Now, given $\Omega\subset\R^n$ a bounded open set, the space $W^s_0L_M(\Omega)$ is then defined as the closure of ${\mathcal D}(\Omega)$ in $W^sL_M$ with respect to the topology $\sigma(L_M\times L_M(\nu_n), E_{\bar M}\times E_{\bar M}(\nu_n))$. The space $W_0^sL_M(\Omega)$ is equipped with the norm $\|u\|_{W_0^sL_M(\Omega)}=\|u\|_{M,\Omega,dx}+\|D^su\|_{M,\nu_n}$ and by Poincar\'e's inequality (see \cite[Corollary 6.2]{FBS}) we can consider the space $W_0^sL_M(\Omega)$ with the equivalent norm $\|D^su\|_{M,\nu_n}$.

The space $W^s_0E_M$ is defined as the closure of ${\mathcal D}(\Omega)$ in $W^sE_M$ in norm topology.

In order to define the dual spaces, we need to introduce the notion of fractional divergence. See \cite{FBSaRi}.

Given $F\in L_{\bar{M}}(\nu_n)$, the fractional divergence of $F$ is defined as
\begin{align*}
{\mathrm{div}}^s F(x) &:= \text{p.v.} \int_{\R^n} \frac{F(y,x)-F(x,y)}{|x-y|^{n+s}}\, dy \\
&= \lim_{\ve\to 0} \int_{\R^n\setminus B_\ve(x)}\frac{F(y,x)-F(x,y)}{|x-y|^{n+s}}\, dy.
\end{align*}
In \cite{FBPLS} it is shown that for $F\in L_{\bar M}(\nu_n)$, then ${\mathrm{div}}^s F\in (W^s_0L_M(\Omega))^*$ and the following {\em fractional integration by parts formula} holds
$$
\langle {\mathrm{div}}^s F, u\rangle = \iint_{\R^{2n}} F D^s u\, d\nu_n.
$$
So, we define the following spaces of distributions
$$
W^{-s}L_{\bar M}(\Omega) := \{\phi\in {\mathcal D}'(\Omega)\colon \phi = f + \mathrm{div}^sF \text{ with } f\in L_{\bar M},\ F\in L_{\bar M}(\nu_n)\}
$$
$$
W^{-s}E_{\bar M}(\Omega) := \{\phi\in {\mathcal D}'(\Omega)\colon \phi = f + \mathrm{div}^sF \text{ with } f\in E_{\bar M},\ F\in E_{\bar M}(\nu_n)\}.
$$
Recall that since $E_{\bar{M}}$ and $E_{\bar{M}}(\nu_n)$ are separable then $W^{-s}E_{\bar M}(\Omega)$ is also separable.

These spaces are endowed with the usual quotient norms,
$$
\|\phi\|_{-s, \bar M} := \inf\{\|f\|_{\bar M} + \|F\|_{\bar M, \nu_v}\colon \phi=f+\mathrm{div}^s F\}.
$$

\subsection{Complementary systems}\label{complementary}

In \cite{Do, DoTru} the authors introduce the notion of complementary systems in order to work in spaces without the usual reflexivity assumption.

Let $Y$ and $Z$ be real Banach spaces with a duality pairing $\langle\cdot,\cdot\rangle$. Let $Y_0\subset Y$ and $Z_0\subset Z$ be closed and separable subspaces. We say $(Y, Y_0; Z, Z_0)$ is a complementary system if $Y_0^*=Z$ and $Z_0^*=Y$ (where equality is understood in the sense of a natural isometry via the duality pairing).

The first natural example of a complementary system is $Y=L_M$, $Y_0=E_M$, $Z=L_{\bar M}$ and $Z_0=E_{\bar M}$.

We use the notation $(Y, Y_0; Z, Z_0)$ for a complementary system. Observe that it is immediate to see that
\begin{equation}\label{LMEM}
\left (L_M\times L_M(\nu_n),E_M\times E_M(\nu_n); L_{\bar M}\times L_{\bar M}(\nu_n),E_{\bar M}\times E_{\bar M}(\nu_n)
\right)
\end{equation}
is also a complementary system.

In \cite{Gossez} the author provides with a general method to generate complementary systems from a previous one. More precisely
\begin{lem}[\cite{Gossez}, Lemma 1.2]\label{lemaGossez}
Given a complementary system $(Y, Y_0; Z, Z_0)$ and a closed subspace $E$ of $Y$, define $E_0= E \cap Y_0$, $F = Z/E_0^\perp$ and $F_0 = Z_0/E_0^\perp$.

Then, the pairing $\langle\cdot,\cdot\rangle$ between $Y$ and $Z$ induces a pairing between $E$ and $F$ if and only if $E_0$ is $\sigma(Y,Z)$ dense in $E$. In this case, $(E, E_0; F, F_0)$ is a complementary system if $E$ is $\sigma(Y, Z_0)$ closed, and conversely, when $Z_0$ is complete, $E$ is $\sigma(Y, Z_0)$ closed if $(E, E_0; F, F_0)$ is a complementary system.
\end{lem}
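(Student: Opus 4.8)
The plan is to reduce everything to the elementary theory of annihilators and the bipolar theorem, used in the dual pairs $(Y,Z)$ and $(Y,Z_0)$ together with the Banach-space dualities $Y_0^*=Z$ and $Z_0^*=Y$; each of the relevant pairings is separating precisely because of these two isometries. For a subspace $A$ of one partner of such a pair I write $A^\perp$ for its annihilator in the other partner, context fixing which pair is meant.

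\emph{The induced pairing.} For $e\in E$ and a class $f\in F=Z/E_0^\perp$ the only candidate is $\langle e,f\rangle:=\langle e,z\rangle$ with $z$ any representative of $f$, and this is unambiguous exactly when $\langle e,w\rangle=0$ for all $e\in E$, $w\in E_0^\perp$, i.e. when $E_0^\perp\subseteq E^\perp$ (annihilators in $Z$); since $E_0\subseteq E$ always gives $E^\perp\subseteq E_0^\perp$, this amounts to $E^\perp=E_0^\perp$. Taking pre-annihilators in $(Y,Z)$ and using the bipolar theorem, $E^\perp=E_0^\perp$ is equivalent to $\overline{E_0}^{\,\sigma(Y,Z)}=\overline{E}^{\,\sigma(Y,Z)}$, and as $E_0\subseteq E\subseteq\overline{E}^{\,\sigma(Y,Z)}$ this in turn means $E\subseteq\overline{E_0}^{\,\sigma(Y,Z)}$, i.e. $E_0$ is $\sigma(Y,Z)$-dense in $E$. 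When this holds the induced pairing separates $E$ and $F$ as well: $Z$ separates $Y$, and $\langle e,z\rangle=0$ for all $e\in E$ forces $z\in E^\perp=E_0^\perp$.

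\emph{The complementary system.} Assume $E_0$ is $\sigma(Y,Z)$-dense in $E$. Since $E$ is norm-closed in $Y$, $E_0=E\cap Y_0$ is closed in $Y_0$ and hence separable, and $F_0$, a quotient of $Z_0$, is separable; so the content is the two duality identities (and the closedness of $F_0$ in $F$). As $E_0$ is a closed subspace of the Banach space $Y_0$ with $Y_0^*=Z$, restriction of functionals gives the isometric identification $E_0^*=Z/E_0^\perp=F$, implemented by $\langle\cdot,\cdot\rangle$; this is unconditional. For $F_0^*=E$ one identifies $F_0$ with $Z_0/(Z_0\cap E_0^\perp)$ and notes that $Z_0\cap E_0^\perp$ is the annihilator $N$ of $E$ inside $Z_0$: a functional $z\in Z_0$ vanishing on $E_0$ is $\sigma(Y,Z_0)$-continuous on $Y$, and since $\sigma(Y,Z_0)$ is coarser than $\sigma(Y,Z)$ the hypothesis upgrades to $\sigma(Y,Z_0)$-density of $E_0$ in $E$, so $z$ vanishes on $E$. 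Then $Z_0^*=Y$ and quotient duality give $F_0^*=N^\perp$ (annihilator in $Y$), while the bipolar theorem in $(Y,Z_0)$ gives $N^\perp=\overline{E}^{\,\sigma(Y,Z_0)}$; hence $F_0^*=E$ \emph{through the pairing} precisely when $E$ is $\sigma(Y,Z_0)$-closed. This gives the ``if'' clause; the converse is the same chain read backwards, and completeness of $Z_0$ is used exactly here, since it is what licenses the open mapping theorem for the continuous surjection $Z_0\to F_0$ and hence the realization of $F_0$ as the Banach quotient $Z_0/N$ (also subsuming the closedness of $F_0$ in $F$).

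I expect the difficulty to be bookkeeping rather than depth: scrupulously tracking which dual pair each annihilator and each bipolar is computed in, exploiting $\sigma(Y,Z_0)\subseteq\sigma(Y,Z)$ so that $\sigma(Y,Z)$-density of $E_0$ automatically yields $\sigma(Y,Z_0)$-density (hence no separate density hypothesis is needed), and pinning down the norm and closedness of $F_0$, where completeness of $Z_0$ becomes indispensable. Confusing $E^\perp$ computed in $Z$ with $E^\perp$ computed in $Z_0$, or losing sight of the requirement that every identification be realized by $\langle\cdot,\cdot\rangle$ itself, are the ways to go wrong.
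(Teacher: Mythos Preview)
The paper does not give its own proof of this lemma; it is quoted from \cite{Gossez} (Lemma~1.2 there) and used as a black box in Subsection~\ref{complementary} to generate the complementary system~\eqref{CS}. Your argument, reducing everything to annihilator and bipolar bookkeeping in the two dual pairs $(Y,Z)$ and $(Y,Z_0)$ together with the quotient-duality identifications $E_0^*=Z/E_0^\perp$ and $(Z_0/N)^*=N^\perp$, is a correct and standard route to the result; there is simply nothing in the present paper to compare it against.
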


Using this Lemma, in \cite{Gossez} it is shown that 
$$
(W^1_0L_M(\Omega),W^1_0E_M(\Omega); W^{-1}L_{\bar M}(\Omega), W^{-1}E_{\bar M}(\Omega))
$$
is a complementary system when the domain $\Omega$ satisfies the {\em segment property} (See Definition \ref{segment} for a precise statement). 

Let us now see that
\begin{equation}\label{CS}
(W^s_0L_M(\Omega),W^s_0E_M(\Omega); W^{-s}L_{\bar M}(\Omega), W^{-s}E_{\bar M}(\Omega))
\end{equation}
is also a complementary system under the same assumptions on $\Omega$.

In fact, since \eqref{LMEM} is a complementary system, we use Lemma \ref{lemaGossez} to generate \eqref{CS} from \eqref{LMEM}.

So we take $E=W^s_0L_M(\Omega)$ and $E_0=E\cap Y_0=W^s_0E_M(\Omega)$. It is also easy to see that
$F=Z/E_0^\perp = W^{-s}L_{\bar M}(\Omega)$ and $F_0 =W^{-s}E_{\bar M}(\Omega)$. So in order to see that \eqref{CS} is a complementary system it remains to check that $W^s_0E_M(\Omega)$ is $\sigma(L_M\times L_M(\nu_n),L_{\bar{M}}\times L_{\bar{M}}(\nu_n))$ dense in $W^s_0L_M(\Omega)$ and that $W^s_0L_M(\Omega)$ is $\sigma(W^s_0L_M(\Omega), W^{-s}E_{\bar M}(\Omega))$ closed.

Now, $W^s_0L_M(\Omega)$ is $\sigma(W^s_0L_M(\Omega), W^{-s}E_{\bar M}(\Omega))$ closed by definition. The proof of the density of $W^s_0E_M(\Omega)$ in $W^s_0L_M(\Omega)$ with respect to the $\sigma(L_M\times L_M(\nu_n),L_{\bar{M}}\times L_{\bar{M}}(\nu_n))$ topology follows similarly as in \cite[Theorem 1.3]{Gossez}. All these details are collected in Appendix \ref{ap.density} for the reader convenience. See Theorem \ref{teo.density}.

\subsection{An abstract result}
In this subsection, we recall an abstract result from \cite{Ti} where the author construct in a complementary system a sequence of projector operators converging to the identity. This abstract result will be of critical importance in the application of the Ljusternik-Schnirelmann method.

\begin{thm}[\cite{Ti}, Theorem 3.1]\label{teo.abstracto}
Assume $(E,E_0;F,F_0)$ is a complementary system, the norm $\|\cdot\|_F$ is dual to $\|\cdot\|_{E_0}$, the norm $\|\cdot\|_E$ is dual to $\|\cdot\|_{F_0}$ and $V\subset E_0$ is a norm-dense linear subspace. Then there exists a sequence of mappings $P_k\colon E_0 \to E_0, k=1,2,\dots$ satisfying
\begin{itemize}
\item $P_k$ is odd and norm-continuous for all $k=1,2,\dots$
\item $P_k(E_0)$ is contained in a finite-dimensional subespace of $V$ for all $k=1,2,\dots$
\item If $\{u_k\}\in E_0$ and $u_k\to u\in E$ for $\sigma(E,F_0)$, then $P_k(u_k)\to u$ for $\sigma(E,F_0)$.
\item If $\{u_k\}\in E_0$ and $u_k\to u\in E$ strongly, then $\|P_k(u_k)\|_E\to \|u\|_E$.  
\end{itemize}	
\end{thm}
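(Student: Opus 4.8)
The plan is to realize each $P_k$ as a composition $P_k=S_k\circ T_k$ of a fixed linear ``coordinate map'' with a nonlinear ``near-isometric reconstruction map''. Since $F_0$ is separable, fix a linearly independent sequence $\{g_i\}_{i\ge 1}\subset F_0$ with $\|g_i\|_{F_0}\le 1$ and dense linear span, and let $T_k\colon E\to\R^k$, $T_k u=(\langle u,g_1\rangle,\dots,\langle u,g_k\rangle)$. This $T_k$ is linear, norm- and $\sigma(E,F_0)$-continuous, satisfies $\|T_k u\|_\infty\le\|u\|_E$, and is surjective (by linear independence of the $g_i$, since $E$ separates $F_0$); I equip $\R^k$ with the quotient norm $\mu_k(\xi):=\inf\{\|u\|_E\colon u\in E,\ T_k u=\xi\}$, so $(\R^k,\mu_k)$ is isometric to $E/\ker T_k$ and $\|\xi\|_\infty\le\mu_k(\xi)$. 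The goal is to produce a continuous, odd, positively homogeneous map $S_k\colon\R^k\to V$ with finite-dimensional range contained in $V$ and with $\|S_k(\xi)\|_E\le(1+\varepsilon_k)\mu_k(\xi)$ and $\|T_kS_k(\xi)-\xi\|_\infty\le\varepsilon_k\,\mu_k(\xi)$ for some $\varepsilon_k\to 0$; then $P_k:=S_k\circ T_k$ should satisfy all four requirements.

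The key preparatory step is an approximate interpolation lemma: for every $\xi\in\R^k$ and $\delta>0$ there is $v\in V$ with $\|T_kv-\xi\|_\infty<\delta$ and $\|v\|_E<\mu_k(\xi)+\delta$. Here the complementary-system structure enters. A near-optimal interpolant exists in $E$ by definition of $\mu_k$; it can be pushed into $E_0$ keeping almost the same norm because $E_0$ is $\sigma(E,F_0)$-dense in $E$ with norm control, i.e. $\overline{B_{E_0}(0,r)}^{\,\sigma(E,F_0)}=B_E(0,r)$ (a Goldstine-type statement, classical in the concrete fractional Orlicz--Sobolev setting and what is really being used here); and then into $V$ by norm density of $V$ in $E_0$, the coordinate error remaining small. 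Granting this, I would construct $S_k$ by working on the $\mu_k$-unit sphere $\Sigma_k$, which is \emph{compact}: cover $\Sigma_k$ by finitely many $\mu_k$-balls of radius $\le\varepsilon_k$ centered at a symmetric set $\zeta_1,\dots,\zeta_{m_k}\in\Sigma_k$; pick, via the interpolation lemma, $v^k_l\in V$ that interpolate the $\zeta_l$ up to error $\le\varepsilon_k$ with $\|v^k_l\|_E\le1+\varepsilon_k$; take a symmetric partition of unity $\{\lambda_l\}$ on $\Sigma_k$ subordinate to the cover; and set $S_k(0):=0$, $S_k(\xi):=\mu_k(\xi)\sum_l\lambda_l(\xi/\mu_k(\xi))\,v^k_l$ for $\xi\ne0$. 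After a routine symmetrization, $S_k$ is odd; it is continuous (also at $0$, since $\|S_k(\xi)\|_E\le(1+\varepsilon_k)\mu_k(\xi)\to0$ as $\xi\to0$), positively homogeneous, and its range lies in the finite-dimensional space $\mathrm{span}\{v^k_1,\dots,v^k_{m_k}\}\subset V$. The norm bound follows from $\|S_k(\xi)\|_E\le\mu_k(\xi)\max_l\|v^k_l\|_E$, and the coordinate estimate because $T_kS_k(\xi)$ is, up to the scalar $\mu_k(\xi)$, a convex combination of the $T_kv^k_l$ over indices with $\lambda_l(\xi/\mu_k(\xi))>0$, each within $O(\varepsilon_k)$ of $\xi/\mu_k(\xi)$.

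One then checks the four properties. Oddness, norm-continuity and finite-dimensional range in $V$ are built in. If $u_k\to u$ in $\sigma(E,F_0)$, the uniform boundedness principle gives $R:=\sup_k\|u_k\|_E<\infty$, so $\|P_k(u_k)\|_E\le(1+\varepsilon_k)\mu_k(T_ku_k)\le(1+\varepsilon_k)\|u_k\|_E\le(1+\varepsilon_k)R$ is bounded, while for each fixed $i$, $|\langle P_k(u_k),g_i\rangle-\langle u_k,g_i\rangle|\le\|T_kP_k(u_k)-T_ku_k\|_\infty\le\varepsilon_k\mu_k(T_ku_k)\le\varepsilon_kR\to0$; hence $\langle P_k(u_k),g_i\rangle\to\langle u,g_i\rangle$ for all $i$, and since $\{P_k(u_k)\}$ is bounded and $\mathrm{span}\{g_i\}$ is dense in $F_0$, $P_k(u_k)\to u$ in $\sigma(E,F_0)$. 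If $u_k\to u$ strongly, then also in $\sigma(E,F_0)$, so $P_k(u_k)\to u$ in $\sigma(E,F_0)$ and thus $\|u\|_E\le\liminf_k\|P_k(u_k)\|_E$ by the $\sigma(E,F_0)$-lower semicontinuity of the dual norm; together with $\|P_k(u_k)\|_E\le(1+\varepsilon_k)\|u_k\|_E\to\|u\|_E$ this gives $\|P_k(u_k)\|_E\to\|u\|_E$.

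I expect the main obstacle to be the tension between two demands on $P_k$: its range must be finite-dimensional, \emph{and} the constant in $\|P_k(u_k)\|_E\le(1+o(1))\|u_k\|_E$ must tend to $1$ (a generic Bartle--Graves right inverse, with only a fixed bound, would not suffice, since precisely this $\limsup$ estimate powers the fourth property). Finite-dimensionality is rescued by making $S_k$ positively homogeneous, so that it is determined on the compact sphere $\Sigma_k$, where a finite partition of unity works; and the near-isometry constant $1+o(1)$ is exactly what forces the near-optimal interpolation inside $V$, which rests on the Goldstine-type density of $E_0$ in $E$ with norm control. That density statement and the approximate interpolation lemma are, I expect, the technical heart of the argument; the remaining verifications are bookkeeping.
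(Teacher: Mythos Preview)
The paper does not prove this theorem at all: it is quoted verbatim from Tienari \cite{Ti} as an abstract tool, with no argument given. So there is no ``paper's own proof'' to compare against; your sketch is in effect a reconstruction of Tienari's original proof.

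Your outline is correct and is essentially Tienari's construction. A few remarks. First, the Goldstine-type step $\overline{B_{E_0}}^{\,\sigma(E,F_0)}\supset B_E$ holds in the abstract complementary-system setting, not just in the concrete Orlicz case: since $\|\cdot\|_E$ is the dual norm of $\|\cdot\|_{F_0}$ and $\|\cdot\|_F$ is the dual norm of $\|\cdot\|_{E_0}$, a Hahn--Banach separation argument (any $\sigma(E,F_0)$-continuous functional is given by some $g\in F_0$, and $\sup_{v\in B_{E_0}}\langle v,g\rangle=\|g\|_F=\|g\|_{F_0}\ge\langle u,g\rangle$ for $u\in B_E$) gives it directly; you should state it that way rather than hedging. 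Second, your appeal to the uniform boundedness principle for the third property is legitimate precisely because $E=F_0^*$ with the dual norm, so $\sigma(E,F_0)$-convergent sequences are norm-bounded. Third, the symmetrization on $\Sigma_k$ (choosing the centers $\zeta_l$ in $\pm$ pairs and the interpolants $v^k_l$ accordingly, with an even partition of unity) deserves one explicit sentence rather than ``routine''. With these clarifications the argument is complete.
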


\section{The fractional $m-$laplacian $(-\Delta_m)^s$}
In this section we introduce the integro--differential operator appearing in our eigenvalue problem \eqref{intro.2}. This operator was first introduced in \cite{BS} and was analyze in the case where the Young function $M$ satisfies the $\Delta_2-$condition.

Let $M$ be a Young function and $\Omega\subset \R^n$ be a bounded, open set with the segment property. Recall that for $0<s<1$ the fractional $m-$Laplacian of a function $u$ is defined as
\begin{align*}
(-\Delta_m)^s u(x)&:=2\,\text{p.v}.\int_{\R^{n}}m(D^su)\,\frac{dy}{|x-y|^{n+s}}
\\
&=2\lim_{\varepsilon\downarrow 0} \int_{|x-y|\geq \varepsilon} m(D^su)\,\frac{dy}{|x-y|^{n+s}}.
\end{align*}
Let us see now that this operator is well defined between the spaces $\Dom((-\Delta_m)^s)$ and $W^{-s}L_{\bar{M}}(\Omega)$ respectively where 
$$
\Dom((-\Delta_m)^s) := \{u\in W^s_0L_M(\Omega)\colon m(D^su)\in L_{\bar M}(\nu_n)\}. 
$$
To do this we consider for $\varepsilon >0$
$$
(-\Delta_m)_\varepsilon^s u(x):=2\int_{|x-y|\geq \varepsilon}m(D^su)\,\frac{dy}{|x-y|^{n+s}}.
$$
\begin{thm}
Let $0<s<1$ be fixed. For $u\in \Dom((-\Delta_m)^s)$ the limit $(-\Delta_m)^s u:=\lim_{\varepsilon\downarrow 0}(-\Delta_m)_\varepsilon^s u$ exists in $W^{-s}L_{\bar{M}}(\Omega)$, that is 
$$
\langle (-\Delta_m)^s u,v \rangle:=\lim_{\varepsilon \downarrow 0}\langle (-\Delta_m)_\varepsilon^s u,v\rangle<\infty,
$$
forall $v\in W_0^sE_M(\Omega)$.

Moreover the following representation formula holds
$$
\langle (-\Delta_m)^su, v\rangle=\iint_{\R^{2n}}m(D^su) D^sv\,d \nu_n,
$$ 	
for all $v\in W_0^sE_{M}(\Omega)$.
\end{thm}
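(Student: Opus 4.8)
The plan is to identify the limit of the truncations with the functional $v \mapsto \iint_{\R^{2n}} m(D^s u)\, D^s v\, d\nu_n$ and to obtain its convergence from the dominated convergence theorem. The hypothesis $u \in \Dom((-\Delta_m)^s)$ says precisely that $u \in W^s_0 L_M(\Omega)$ and that $G := m(D^s u) \in L_{\bar M}(\nu_n)$, while $D^s v \in L_M(\nu_n)$ for every $v \in W^s_0 L_M(\Omega)$. Hence, by the H\"older-type inequality of Section~\ref{prel}, the product $G\, D^s v$ lies in $L^1(\nu_n)$ with
$$
\iint_{\R^{2n}} |G\, D^s v|\, d\nu_n \le 2\,\|G\|_{\bar M,\nu_n}\,\|D^s v\|_{M,\nu_n},
$$
so the right-hand side of the representation formula is well defined on $\Dom((-\Delta_m)^s)\times W^s_0 L_M(\Omega)$ and depends continuously on $v$. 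The statement then reduces to two assertions: that $\langle (-\Delta_m)_\varepsilon^s u, v\rangle$ equals its truncated version $\iint_{\{|x-y|\ge\varepsilon\}} m(D^s u)\, D^s v\, d\nu_n$, and that this tends, as $\varepsilon\downarrow 0$, to $\iint_{\R^{2n}} m(D^s u)\, D^s v\, d\nu_n$.

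First I would prove the truncated identity for $v \in \mathcal{D}(\Omega)$. From the pointwise definition,
$$
\langle (-\Delta_m)_\varepsilon^s u, v\rangle = 2\int_{\R^n} v(x)\left(\int_{|x-y|\ge\varepsilon} m(D^s u(x,y))\,\frac{dy}{|x-y|^{n+s}}\right)dx .
$$
The double integral is absolutely convergent: the measure $|v(x)|\,\chi_{\{|x-y|\ge\varepsilon\}}\,\frac{dx\,dy}{|x-y|^{n+s}}$ is finite (the kernel $|x-y|^{-n-s}$ is integrable away from the diagonal and, weighted by the compactly supported $v$, at infinity) and has bounded density with respect to $\nu_n$, so $G \in L_{\bar M}(\nu_n)$ is automatically integrable against it. Fubini's theorem then applies, and symmetrizing in $x\leftrightarrow y$ — using that the odd extension of $m$ makes $|x-y|^{-n-s} m(D^s u(x,y))$ antisymmetric while $|x-y|$ is symmetric — turns $2v(x)$ into $v(x)-v(y) = |x-y|^s D^s v(x,y)$ and yields $\langle (-\Delta_m)_\varepsilon^s u, v\rangle = \iint_{\{|x-y|\ge\varepsilon\}} m(D^s u)\, D^s v\, d\nu_n$. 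This incidentally shows that $(-\Delta_m)_\varepsilon^s u$ defines an element of $W^{-s}L_{\bar M}(\Omega)$, namely the fractional divergence of $G\,\chi_{\{|x-y|\ge\varepsilon\}}\in L_{\bar M}(\nu_n)$. The identity then extends to all $v\in W^s_0 E_M(\Omega)$ by the norm density of $\mathcal{D}(\Omega)$ in $W^s_0 E_M(\Omega)$ and the continuity in $v$ of both sides.

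Next I would let $\varepsilon\downarrow 0$. The functions $\chi_{\{|x-y|\ge\varepsilon\}}\, m(D^s u)\, D^s v$ converge $\nu_n$-a.e. to $m(D^s u)\, D^s v$ and are dominated, uniformly in $\varepsilon$, by $|m(D^s u)\, D^s v|\in L^1(\nu_n)$; dominated convergence gives
$$
\lim_{\varepsilon\downarrow 0}\langle (-\Delta_m)_\varepsilon^s u, v\rangle = \iint_{\R^{2n}} m(D^s u)\, D^s v\, d\nu_n < \infty
$$
for every $v\in W^s_0 E_M(\Omega)$, which is at once the existence of the limit and the claimed representation formula. Finally, by the fractional integration-by-parts formula the functional $v\mapsto \iint_{\R^{2n}} m(D^s u)\, D^s v\, d\nu_n$ is the action of $\mathrm{div}^s G$; since $G = m(D^s u)\in L_{\bar M}(\nu_n)$, we have $\mathrm{div}^s G\in W^{-s}L_{\bar M}(\Omega)$, so $(-\Delta_m)^s u$ indeed defines an element of $W^{-s}L_{\bar M}(\Omega)$.

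The only step requiring genuine care is the well-definedness and the Fubini argument in the truncated identity: without the $\Delta_2$-condition one cannot take absolute convergence of iterated integrals for granted, and the key point is the observation above that the $v$-weighted measure is finite with bounded $\nu_n$-density, so that membership of $m(D^s u)$ in $L_{\bar M}(\nu_n)$ — the defining property of $\Dom((-\Delta_m)^s)$ — suffices. Everything downstream (the dominated convergence and the identification with a fractional divergence) is a routine application of the Orlicz duality and the fractional divergence calculus recalled in Section~\ref{prel}.
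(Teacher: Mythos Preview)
Your proof is correct and follows the same overall strategy as the paper's: symmetrize the truncated pairing to obtain $\iint_{\{|x-y|\ge\varepsilon\}} m(D^su)\,D^sv\,d\nu_n$, then pass to the limit by dominated convergence using $m(D^su)\,D^sv\in L^1(\nu_n)$ from the Orlicz H\"older inequality.

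The one noteworthy difference is in how the Fubini step is justified. The paper first proves, via Jensen's inequality applied to the probability measure $\tfrac{s\varepsilon^s}{\omega_{n-1}}\chi_{\{|x-y|\ge\varepsilon\}}\,\tfrac{dy}{|x-y|^{n+s}}$, that $(-\Delta_m)^s_\varepsilon u$ is an honest function in $L_{\bar M}(\R^n,dx)$; this gives the pairing with $v\in E_M$ a concrete meaning as an $L_{\bar M}$--$E_M$ duality and makes the iterated integral manipulation immediate. You instead bypass this pointwise estimate and argue directly at the level of the double integral, observing that the weighted measure $|v(x)|\chi_{\{|x-y|\ge\varepsilon\}}\,|x-y|^{-n-s}\,dx\,dy$ is finite with bounded density relative to $\nu_n$, so that $m(D^su)\in L_{\bar M}(\nu_n)$ is $L^1$ against it (this follows from Young's inequality: $|G|\le k\bar M(G/k)+kM(1)$ and both terms integrate finitely). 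Your route is slightly more economical and yields directly that $(-\Delta_m)^s_\varepsilon u = \mathrm{div}^s\big(m(D^su)\chi_{\{|x-y|\ge\varepsilon\}}\big)\in W^{-s}L_{\bar M}(\Omega)$; the paper's route gives the somewhat stronger intermediate information that the truncated operator lands in the function space $L_{\bar M}$ rather than merely the distribution space.
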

\begin{proof}
Let $0<\varepsilon<1$. We begin by proving that $(-\Delta_m)_\varepsilon^s u\in L_{\bar{M}}$ for $u\in \Dom((-\Delta_m)^s)$. If $u\in \Dom((-\Delta_m)^s)$ then there exists a constant $k>0$ such that 
$$
\iint_{\R^{2n}}\bar{M}\left (\frac{m(D^su)}{k}\right)d\nu_n<\infty,
$$
therefore by Jensen's inequality
\begin{align*}
\int_{\R^n}\bar{M}\left (\frac{(-\Delta_m)_\varepsilon^s u}{\left ({\frac{2k\varepsilon^{-s}\omega_{n-1}}{s}}\right )} \right )\,dx&=\int_{\R^n}\bar{M}\left (\frac{\int_{|x-y|\geq \varepsilon}\frac{m\left (\frac{u(x)-u(y)}{|x-y|^s} \right )}{k}\,\frac{dy}{|x-y|^{n+s}}}{\frac{\varepsilon^{-s}\omega_{n-1}}{s}} \right ) \,dx
\\
&\leq\frac{\varepsilon^{s}s}{\omega_{n-1}}\int_{\R^n}\int_{|x-y|\geq\varepsilon} \bar{M}\left (\frac{m\left (\frac{u(x)-u(y)}{|x-y|^s} \right )}{k}\right )\,\frac{dy}{|x-y|^{n+s}}\,dx
\\
&\leq \frac{\varepsilon^s s}{\omega_{n-1}}\iint_{\R^{2n}}\bar{M}\left( \frac{m(D^s u)}{k}\right)\,d\nu_n
\\
&<\infty,
\end{align*}
where $\omega_{n-1}$ denotes the measure of the $(n-1)-$dimensional sphere $S^{n-1}$. The above inequality implies $(-\Delta_m)_\varepsilon^s u\in L_{\bar{M}}$ .

Let $v\in W_0^sE_M(\Omega)$, using Fubini's theorem and change variables we obtain
\begin{align*}
\langle (-\Delta_m)_\varepsilon^s u,v\rangle &=2\int_{\R^n}\int_{|x-y|\geq \varepsilon}m\left ( \frac{u(x)-u(y)}{|x-y|^s}\right ) v(x)\,\frac{dy}{|x-y|^{n+s}} \,dx	
\\
&=2\int_{\R^n}\int_{|x-y|\geq \varepsilon}m\left ( \frac{u(y)-u(x)}{|x-y|^s}\right ) v(y)\,\frac{dy}{|x-y|^{n+s}} \,dx,	
\end{align*}
and so 	
\begin{align*}
\langle (-\Delta_m)_\varepsilon^s u,v\rangle &=\int_{\R^n}\int_{\R^n}m\left ( \frac{u(x)-u(y)}{|x-y|^s}\right ) \left (\frac{v(x)-v(y)}{|x-y|^s}\right )\,\chi_{\{|x-y|\geq\varepsilon\}}(x,y)\,\frac{dxdy}{|x-y|^n}
\\
&=\iint_{\R^{2n}}m(D^s u)D^sv \chi_{\{|x-y|\geq \varepsilon\}}\,d\nu_n,
\end{align*}
since $u\in \Dom((-\Delta_m)^s)$ and $v\in W_0^s E_M(\Omega)$ we have $m(D^s u)\in L_{\bar{M}}(\nu_n)$ and $D^sv\in L_M(\nu_n)$ respectively, therefore by the dominated convergence theorem we conclude the proof.
\end{proof}
By our remarks after the definition of $\Dom(m)$, \eqref{Domm}, it follows that 
$$
W^s_0E_M(\Omega)\subset \Dom((-\Delta_m)^s)\subset W^s_0L_M(\Omega).
$$
Recall now that the monotonicity of $m$ implies that, for any $a,b\in\R$,
$$
(m(a)-m(b))(a-b)\ge (m(|a|)-m(|b|))(|a|-|b|)\ge 0,
$$
from where it follows that the operator $(-\Delta_m)^s$ is monotone. That is
$$
\langle (-\Delta_m)^s u - (-\Delta_m)^s v, u-v\rangle \ge 0,\quad \text{for } u,v\in \Dom((-\Delta_m)^s).
$$
Another key property of the fractional $m-$laplacian is that it is {\em pseudomonotone}, this is the content of the following theorem.
\begin{thm}\label{pseudo}
Let $\Omega\subset\R^n$ be a bounded domain that satisfies the segment property. If $\{u_i\}_{i\in\N}\subset \Dom((-\Delta_m)^s)$ is a sequence such that fulfills the conditions
$$
\begin{cases}	
&u_i\to u\text{ for }\sigma(W_0^sL_M(\Omega),W^{-s}E_{\bar{M}}(\Omega))\\
&(-\Delta_m)^su_i\to f\in W^{-s}L_{\bar{M}}(\Omega)\text{ for }\sigma(W^{-s}L_{\bar{M}}(\Omega),W_0^sE_M(\Omega))\\
&\limsup_{i\to \infty}\langle (-\Delta_m)^s u_i,u_i\rangle\leq \langle f,u\rangle
\end{cases}
$$
then 
$$
\begin{cases}
&u\in \Dom((-\Delta_m)^s)
\\
&(-\Delta_m)^s u=f
\\
&\langle (-\Delta_m)^s u_i,u_i\rangle\to  \langle f,u\rangle\text{ if }i\to \infty.
\end{cases}
$$
\end{thm}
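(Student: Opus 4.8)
The plan is to mimic the classical argument for monotone operators, with two modifications forced by the absence of the $\Delta_2$-condition: since the Nemytskii map $u\mapsto m(u)$ need not be continuous (so $(-\Delta_m)^s$ is not hemicontinuous and Minty's device is unavailable), I would replace it by an almost-everywhere convergence argument based on a compact embedding, and the decisive a priori estimate would come from the equality case in Young's inequality. Throughout I would use that $\big(W^s_0L_M(\Omega),W^s_0E_M(\Omega);W^{-s}L_{\bar M}(\Omega),W^{-s}E_{\bar M}(\Omega)\big)$ is a complementary system (Subsection~\ref{complementary}): thus $W^{-s}L_{\bar M}(\Omega)=(W^s_0E_M(\Omega))^*$, $W^{-s}E_{\bar M}(\Omega)$ is complete, and any $\sigma(W^s_0L_M(\Omega),W^{-s}E_{\bar M}(\Omega))$-convergent sequence is bounded in $W^s_0L_M(\Omega)$. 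I would also use that, for $u\in\Dom((-\Delta_m)^s)$, the representation $\langle(-\Delta_m)^su,v\rangle=\iint_{\R^{2n}}m(D^su)D^sv\,d\nu_n$ of the previous theorem extends to every $v\in W^s_0L_M(\Omega)$, by the fractional integration by parts formula, since then $m(D^su)\in L_{\bar M}(\nu_n)$.

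\emph{A priori bound and extraction of limits.} First I would note that $\langle(-\Delta_m)^su_i,u_i\rangle=\iint_{\R^{2n}}m(D^su_i)D^su_i\,d\nu_n\ge0$ (the integrand is nonnegative since $m$ is odd and increasing), so the third hypothesis yields $\sup_i\iint_{\R^{2n}}m(D^su_i)D^su_i\,d\nu_n<\infty$. Since equality holds in \eqref{young} for the pair $(m(t),t)$, one has $\bar M(m(t))=tm(t)-M(t)\le tm(t)$, whence $\sup_i\iint_{\R^{2n}}\bar M(m(D^su_i))\,d\nu_n<\infty$ and, by convexity, $\{m(D^su_i)\}$ is bounded in $L_{\bar M}(\nu_n)$. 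As $E_M(\nu_n)$ is separable and $L_{\bar M}(\nu_n)=(E_M(\nu_n))^*$, a subsequence satisfies $m(D^su_i)\rightharpoonup\xi$ for $\sigma(L_{\bar M}(\nu_n),E_M(\nu_n))$ with $\xi\in L_{\bar M}(\nu_n)$. On the other hand $\{u_i\}$ is bounded in $W^s_0L_M(\Omega)$, so the compact embedding $W^s_0L_M(\Omega)\hookrightarrow L^1(\Omega)$ (available in the fractional Orlicz--Sobolev setting, cf.\ \cite{ACPS,BS}) gives, along a further subsequence, $u_i\to u$ a.e.\ in $\R^n$; hence $D^su_i\to D^su$ a.e.\ in $\R^{2n}$ and, by continuity of $m$, $m(D^su_i)\to m(D^su)$ a.e.

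\emph{Identification of the limits.} The core step is to show $\xi=m(D^su)$. For a compact $K\subset\R^{2n}\setminus\Delta$ one has $\nu_n(K)<\infty$, and the bound $\sup_i\int_K\bar M(m(D^su_i))\,d\nu_n<\infty$ together with $\bar M(t)/t\to\infty$ gives, by de la Vall\'ee-Poussin, uniform $\nu_n$-integrability of $\{m(D^su_i)\}$ on $K$; with the a.e.\ convergence, Vitali's theorem yields $m(D^su_i)\to m(D^su)$ in $L^1(K,\nu_n)$. Testing the weak-$*$ convergence against bounded functions supported in such sets $K$ (which belong to $E_M(\nu_n)$) then forces $\xi=m(D^su)$ a.e., so $m(D^su)=\xi\in L_{\bar M}(\nu_n)$, i.e.\ $u\in\Dom((-\Delta_m)^s)$. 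Next, for $w\in W^s_0E_M(\Omega)$ we have $D^sw\in E_M(\nu_n)$, hence $\langle(-\Delta_m)^su_i,w\rangle=\iint_{\R^{2n}}m(D^su_i)D^sw\,d\nu_n\to\iint_{\R^{2n}}m(D^su)D^sw\,d\nu_n=\langle(-\Delta_m)^su,w\rangle$, while $\langle(-\Delta_m)^su_i,w\rangle\to\langle f,w\rangle$ by the second hypothesis; since $W^{-s}L_{\bar M}(\Omega)=(W^s_0E_M(\Omega))^*$, this gives $(-\Delta_m)^su=f$. Finally $m(D^su_i)D^su_i\ge0$ and converges a.e.\ to $m(D^su)D^su$, so Fatou's lemma gives $\liminf_i\langle(-\Delta_m)^su_i,u_i\rangle\ge\iint_{\R^{2n}}m(D^su)D^su\,d\nu_n=\langle(-\Delta_m)^su,u\rangle=\langle f,u\rangle$, which combined with the third hypothesis yields $\langle(-\Delta_m)^su_i,u_i\rangle\to\langle f,u\rangle$. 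None of the three conclusions depends on the chosen subsequence (the first two concern only $u$ and $f$; for the third a standard subsequence argument promotes the convergence to the full sequence), completing the proof.

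I expect the main obstacle to be exactly the identification $\xi=m(D^su)$: without the $\Delta_2$-condition $m$ is not continuous from $E_M(\nu_n)$ into $E_{\bar M}(\nu_n)$, so one cannot pass to the limit in a Minty-type inequality, and the substitute is the almost-everywhere convergence of $D^su_i$ — this is why the compact embedding is the crucial external ingredient — upgraded to local $L^1$-convergence of $m(D^su_i)$ via the de la Vall\'ee-Poussin and Vitali argument, all resting on the a priori bound extracted from the equality case of Young's inequality. A secondary, routine point to be careful about is the justification that $\langle(-\Delta_m)^su_i,u_i\rangle$ and $\langle(-\Delta_m)^su,u\rangle$ are given by the integral formula for an element of $\Dom((-\Delta_m)^s)$ paired with itself, which follows from the fractional integration by parts formula.
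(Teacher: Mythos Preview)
Your proof is correct and follows a genuinely different route from the paper's. The paper identifies the weak-$*$ limit $\phi$ of $m(D^su_i)$ by a Minty-type monotonicity argument: it tests the inequality $\iint_{\R^{2n}}(m(D^su_i)-m(W))(D^su_i-W)\,d\nu_n\ge 0$ against bounded compactly supported $W$, passes to the limit, and then invokes two preparatory lemmas (Lemmas~\ref{paraMinty} and~\ref{parapseudo}) based on truncation to the sets $R_j(u)=\{|(x,y)|\le j,\ |D^su|\le j\}$ to conclude $m(D^su)=\phi$ in duality. Your remark that ``Minty's device is unavailable'' is therefore not quite accurate---the paper does use it, at the cost of this truncation machinery. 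For the $\liminf$ estimate the paper again uses monotonicity with truncated test functions $D^{s,j}u$ rather than Fatou.

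Your approach trades monotonicity for compactness: you obtain $D^su_i\to D^su$ $\nu_n$-a.e.\ from the compact embedding $W^s_0L_M(\Omega)\hookrightarrow E_M(\Omega)$ (Theorem~\ref{teo.compact}), upgrade to local $L^1$-convergence of $m(D^su_i)$ via de la Vall\'ee-Poussin/Vitali using the uniform modular bound on $\bar M(m(D^su_i))$, and identify $\xi=m(D^su)$ pointwise by testing against bounded functions with compact support in $\R^{2n}\setminus\Delta$. This is shorter, yields a pointwise (not merely duality) identification, and makes the $\liminf$ step immediate by Fatou. The price is that the argument is no longer purely monotone-operator theoretic and relies on the compact embedding---which the paper proves separately and in fact uses in exactly the same a.e.\ fashion later, in the proof of Theorem~\ref{teouilambdai}. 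Both approaches rest on the same a priori bound $\sup_i\iint_{\R^{2n}}\bar M(m(D^su_i))\,d\nu_n<\infty$ coming from the equality case of Young's inequality.
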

It will be convenient to introduce the following notation. Given a function $u$, we denote the sets $\{R_j(u)\}_{j\in\N}$ as
$$
R_j(u):=\{(x,y)\in\R^{2n}\colon |(x,y)|\leq j\text{ and }|D^su(x,y)|\leq j\}.
$$
To prove Theorem \ref{pseudo} we need first the following two lemmas.
\begin{lem}\label{paraMinty} 
Let $u$ be a function in $W_0^sL_M(\Omega)$ and $v\in W_0^sE_M(\Omega)$ respectively. If $0<|\lambda|<1$ then for each $j\in\N$
$$
m(D^su+\lambda D^sv) D^sv,\quad m(D^s u) D^sv\in L^1(R_j(u),\nu_n).
$$ 
Moreover 
$$
 \lim_{\lambda\to 0}\iint_{R_j(u)}m(D^su+\lambda D^sv) D^sv\,d\nu_n=\iint_{R_j(u)} m(D^su) D^sv\,d\nu_n.
 $$	
\end{lem}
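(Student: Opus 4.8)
The plan is to reduce everything to the set $R_j(u)$, where all quantities are bounded, and then apply the dominated convergence theorem. First I would observe that on $R_j(u)$ the H\"older quotient $D^su$ is bounded by $j$, and since $0<|\lambda|<1$ and $v\in W^s_0E_M(\Omega)\subset W^s_0L_M(\Omega)$ we still need to control $D^sv$; however $D^sv$ need not be bounded pointwise on $R_j(u)$, so the key point is to exploit that the \emph{measure} $\nu_n$ restricted to $R_j(u)$ is finite (indeed $R_j(u)$ is contained in the ball of radius $j$ in $\R^{2n}$, which stays at positive distance from the diagonal only away from it, but the bound $|(x,y)|\le j$ together with the structure of $d\nu_n=dxdy/|x-y|^n$ gives $\nu_n(R_j(u)\cap\{|x-y|>\delta\})<\infty$ for each $\delta$; for the part near the diagonal one uses $|D^su|\le j$ to see $|u(x)-u(y)|\le j|x-y|^s$, which does not immediately bound the measure, so instead I would argue integrability directly). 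The cleanest route: on $R_j(u)$ we have $|D^su+\lambda D^sv|\le j + |D^sv|$, hence by monotonicity of $m$,
$$
|m(D^su+\lambda D^sv)\,D^sv|\le m(j+|D^sv|)\,|D^sv|.
$$
By the Young inequality \eqref{young} applied with $\tau=m(j+|D^sv|)$ and $t$ chosen appropriately, together with convexity of $\bar M$ and the fact that $D^sv\in L_M(\nu_n)$ (so $m(|D^sv|)$, and by monotonicity also $m(j+|D^sv|)$ up to splitting, lies in $L_{\bar M}(\nu_n)$ on the region where $|D^sv|\ge j$, while on $\{|D^sv|<j\}\cap R_j(u)$ everything is bounded by constants and $\nu_n$ is finite there), one obtains an $L^1(R_j(u),\nu_n)$ bound uniform in $\lambda$. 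The same argument with $\lambda=0$ gives $m(D^su)D^sv\in L^1(R_j(u),\nu_n)$.

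For the limit, I would apply the dominated convergence theorem on $R_j(u)$: as $\lambda\to 0$, by continuity of $m$ (it is right continuous and monotone, hence continuous except possibly at countably many points, but in the setting of the paper $m$ is taken continuous) we have $m(D^su+\lambda D^sv)\to m(D^su)$ pointwise $\nu_n$-a.e.\ on $R_j(u)$, so $m(D^su+\lambda D^sv)D^sv\to m(D^su)D^sv$ pointwise, and the uniform-in-$\lambda$ dominating function constructed above is in $L^1(R_j(u),\nu_n)$. Hence
$$
\lim_{\lambda\to 0}\iint_{R_j(u)}m(D^su+\lambda D^sv)\,D^sv\,d\nu_n=\iint_{R_j(u)}m(D^su)\,D^sv\,d\nu_n.
$$

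The main obstacle is establishing the uniform-in-$\lambda$ $L^1$ domination cleanly: the subtlety is that $D^sv$ is not bounded on $R_j(u)$ (the cutoff is on $D^su$, not $D^sv$) and $\nu_n(R_j(u))$ may be infinite near the diagonal, so one cannot simply say "bounded integrand on a finite measure set." The resolution is to split $R_j(u)$ according to whether $|D^sv|\le j$ or $|D^sv|> j$: on the first piece the integrand is dominated by $m(2j)\,j$, which is integrable over $R_j(u)\cap\{|D^sv|\le j\}$ because there $|D^sv|\le j$ forces, together with $|(x,y)|\le j$, a genuinely finite $\nu_n$-measure (this uses that $|u(x)-u(y)|\le j|x-y|^s$ and $|v(x)-v(y)|\le j|x-y|^s$ simultaneously is not what bounds it — rather one notes $D^sv\in L_M(\nu_n)$ so $\{|D^sv|\le j\}\cap R_j(u)$ has finite measure only if we also know $v$ decays; more safely, dominate by $m(2j)|D^sv|\in L^1$ via Young again); on the second piece $j<|D^sv|$ so $|D^su+\lambda D^sv|\le j+|D^sv|<2|D^sv|$, giving $|m(D^su+\lambda D^sv)D^sv|\le m(2|D^sv|)|D^sv|$, and since $D^sv\in L_M(\nu_n)$ implies $2D^sv\in L_M(\nu_n)$ (no $\Delta_2$ needed because $v\in E_M$-type space, or directly because $L_M$ is a linear space) and hence $m(2|D^sv|)\in L_{\bar M}(\nu_n)$ by the characterization of $\Dom(m)$ for elements of $E_M(\nu_n)$, the product is in $L^1$ by the Young/H\"older estimate $\iint|uv|\,d\nu_n\le 2\|u\|_M\|v\|_{\bar M}$. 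Both dominating functions are independent of $\lambda$, completing the argument.
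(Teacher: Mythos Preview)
Your approach is essentially the same as the paper's: on $R_j(u)$ bound $|m(D^su+\lambda D^sv)\,D^sv|$ via Young's inequality by (a constant times) $\bar M\!\big(m(j+|D^sv|)/k\big)+M\!\big(|D^sv|/\tilde k\big)$, split $R_j(u)$ according to $\{|D^sv|\le j\}$ versus $\{|D^sv|>j\}$, on the second piece use that $D^sv\in E_M(\nu_n)$ forces $m(2|D^sv|)\in L_{\bar M}(\nu_n)$, and finish with dominated convergence. The only difference is presentational: where you hesitate about the region $\{|D^sv|\le j\}\cap R_j(u)$, the paper simply bounds by the constant $\bar M(m(2j)/k)$ times $\nu_n(R_j(u))$ and moves on.
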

\begin{proof}
Let $v\in W_0^sE_M(\Omega)$ and $u\in W_0^sL_M(\Omega)$, then $m(2|D^sv|)\in L_{\bar{M}}(\nu_n)$ and $D^sv\in L_M(\nu_n)$ then there exists constants $k,\tilde{k}>0$ such that
\begin{equation}\label{c1}
\iint_{\R^{2n}}\bar{M}\left(\frac{m(2|D^sv|)}{k} \right)\,d\nu_n<\infty 
\end{equation}
and
\begin{equation}\label{c2}
\iint_{\R^{2n}}M\left ( \frac{|D^sv|}{\tilde{k}}\right )\,d\nu_n<\infty.
\end{equation}
We observe by using Young's inequality that
\begin{equation}\label{c3}
\begin{split}
|m(D^su+\lambda D^sv)D^sv|&\leq k\tilde{k}\left \{\bar{M}\left (\frac{m(|D^su|+|D^sv|)}{k}\right )+M\left(\frac{|D^sv|}{\tilde{k}}\right) \right\}
\\
&\leq k\tilde{k}\left \{\bar{M}\left (\frac{m(j+|D^sv|)}{k}\right )+M\left(\frac{|D^sv|}{\tilde{k}}\right) \right\}
\end{split}
\end{equation}
in $R_j(u)$.
Therefore for each $j$ 
\begin{align*}
&\iint_{R_j(u)}\left |\bar{M}\left ( \frac{m(j+|D^sv|)}{k}\right ) \right| \,d\nu_n=
\\
&\left\{ \iint_{\{|D^sv|\leq j\}\cap R_j(u)}+\iint_{\{|D^sv|>j\}\cap R_j(u)}\right\}\left |\bar{M}\left ( \frac{m(j+|D^sv|)}{k}\right ) \right| \,d\nu_n\leq 
\\
&\bar{M}(m(2j/k))|R_j(u)|+\iint_{\{|D^sv|>j\}\cap R_j(u)}\bar{M}\left(\frac{m(2|D^sv|)}{k} \right )\,d\nu_n\leq
\\
&\bar{M}(m(2j/k))|R_j(u)|+\iint_{\R^{2n}}\bar{M}\left ( \frac{m(2|D^sv|)}{k}\right )\,d\nu_n<\infty,
\end{align*}
from this inequality together with \eqref{c1}-\eqref{c3} it follows that $m(D^su+\lambda D^sv)D^sv \in L^1(R_j(u),d\nu_n)$. 

Moreover observe that in $R_j(u)$,
$$
|m(D^su)D^sv|\leq m(j)|D^sv|\leq \bar{M}(\tilde{k}m(j))+M\left ( \frac{|D^sv|}{\tilde{k}}\right ).
$$
So using \eqref{c2} it follows that $m(D^su)D^sv\in L^1(R_j(u),\nu_n)$. 

Finally, by using 
$$
|m(D^su+\lambda D^sv)D^sv|\leq k\tilde{k}\left \{\bar{M}\left (\frac{m(j+|D^sv|)}{k}\right )+M\left(\frac{|D^sv|}{\tilde{k}}\right) \right\}\in L^1(R_j(u),\nu_n),
$$
the fact
$$
m(D^su+\lambda D^sv)  D^sv\to m(D^s u) D^sv
$$
if $\lambda\to 0$ a.e. in $R_j(u)$
and the dominated convergence theorem we conclude the proof.
\end{proof}
\begin{lem}\label{parapseudo}
If there exists $u\in W_0^sL_M(\Omega)$ and $\phi \in L_{\bar{M}}(\nu_n)$ such that
\begin{equation}\label{cpseudo}
\iint_{R^{2n}}\left (m(W)-\phi\right)(W-D^s u)\,d\nu_n\geq 0,
\end{equation}
for all $W\in L^{\infty}(\R^{2n},d\nu_n)$ with compact support then $m(D^su)=\phi$ in $(W_0^sL_{M}(\Omega))^*$ that is
$$
\iint_{\R^{2n}}m(D^su)D^sv\,d \nu_n=\iint_{\R^{2n}}\phi D^sv\,d \nu_n\qquad \forall v\in W_0^sL_M(\Omega).
$$ 
\end{lem}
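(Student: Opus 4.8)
The plan is to run a Minty-type argument whose real content is to upgrade the variational inequality \eqref{cpseudo} to the pointwise identity $m(D^su)=\phi$ holding $\nu_n$-almost everywhere on $\R^{2n}$; once this is established, the weak identity claimed in the lemma is immediate from the generalized Hölder inequality, since $\phi\in L_{\bar M}(\nu_n)$ and $D^sv\in L_M(\nu_n)$ for every $v\in W^s_0L_M(\Omega)$. To obtain the pointwise identity, I would test \eqref{cpseudo} against the fields
\[
W=D^su\,\chi_{R_j(u)}+\theta\,\chi_A ,
\]
where $\theta\in\R$ is a parameter, $j\in\N$, and $A\subset\R^{2n}$ is any bounded Borel set with $\nu_n(A)<\infty$ on which $|D^su|\le N_0$ for some constant $N_0$. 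Such a $W$ is admissible in \eqref{cpseudo} because it lies in $L^\infty(\R^{2n},d\nu_n)$ and vanishes outside a bounded set, hence has compact support; and for $j\ge N_0$ large enough that $A\subset R_j(u)$ one computes $m(W)=m(D^su)$ on $R_j(u)\setminus A$, $m(W)=m(D^su+\theta)$ on $A$, $m(W)=0$ on $R_j(u)^c$, while $W-D^su=\theta$ on $A$, $=0$ on $R_j(u)\setminus A$, and $=-D^su$ on $R_j(u)^c$.

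Substituting into \eqref{cpseudo} and splitting the integral over these three regions, everything collapses to
\[
\iint_{R_j(u)^c}\phi\,D^su\,d\nu_n\ +\ \theta\iint_A\bigl(m(D^su+\theta)-\phi\bigr)\,d\nu_n\ \ge\ 0 .
\]
All terms here are finite: $\phi D^su\in L^1(\nu_n)$ by the generalized Hölder inequality, while on $A$ the integrand $m(D^su+\theta)-\phi$ is integrable because $m(D^su+\theta)$ is bounded there (using $|D^su|\le N_0$ and the continuity of $m$) and $\phi\in L^1(A)$ as $\nu_n(A)<\infty$. Since $R_j(u)\uparrow$ a set of full $\nu_n$-measure, $R_j(u)^c$ decreases to a $\nu_n$-null set, so the first integral tends to $0$ as $j\to\infty$ by dominated convergence, while the $A$-integral does not depend on $j$. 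Hence, letting $j\to\infty$,
\[
\theta\iint_A\bigl(m(D^su+\theta)-\phi\bigr)\,d\nu_n\ \ge\ 0\qquad\text{for all }\theta\in\R .
\]
Now I would take $\theta\downarrow 0$ and $\theta\uparrow 0$ separately: dividing by $\theta$ (which flips the inequality in the second case) and using dominated convergence — with $|m(D^su+\theta)|\le m(N_0+1)$ on $A$ for $|\theta|\le 1$, exactly as in the limiting step of Lemma \ref{paraMinty} — yields both $\iint_A(m(D^su)-\phi)\,d\nu_n\ge 0$ and $\le 0$, i.e. $\iint_A(m(D^su)-\phi)\,d\nu_n=0$. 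Since every such $A$, and every Borel subset of it, is admissible, and since $\R^{2n}\setminus\Delta$ is a countable union of compact sets each exhausted by the sets $\{|D^su|\le N\}$, this forces $m(D^su)=\phi$ $\nu_n$-a.e.

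With $m(D^su)=\phi\in L_{\bar M}(\nu_n)$ in hand, the conclusion follows: for every $v\in W^s_0L_M(\Omega)$ one has $D^sv\in L_M(\nu_n)$, hence $m(D^su)D^sv=\phi D^sv\in L^1(\nu_n)$ and $\iint_{\R^{2n}}m(D^su)D^sv\,d\nu_n=\iint_{\R^{2n}}\phi D^sv\,d\nu_n$; equivalently $\mathrm{div}^s(m(D^su))=\mathrm{div}^s\phi$ in $W^{-s}L_{\bar M}(\Omega)$. I expect the only delicate point to be the bookkeeping in the displayed computation — checking that each piece is integrable so the splitting over $R_j(u)^c$, $R_j(u)\setminus A$ and $A$ is legitimate even though $m(D^su)$ is not known to belong to $L_{\bar M}(\nu_n)$ globally — and the observation that the $j\to\infty$ limit is harmless precisely because $D^su$ is truncated by $R_j(u)$ whereas the perturbation $\theta\chi_A$ lives on the fixed set $A$, so no $j$-dependent quantity is ever divided by $\theta$.
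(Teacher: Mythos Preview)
Your argument is correct and runs a genuine Minty-type scheme, but the choice of perturbation differs from the paper's. The paper first tests \eqref{cpseudo} with $W=D^{s,j}w-D^{s,j}u+D^{s,l}u$ (truncations on $R_j(u)$ and $R_l(u)$), lets $l\to\infty$ to obtain $\iint_{R_j(u)}(m(D^sw)-\phi)(D^sw-D^su)\,d\nu_n\ge 0$ for every $w$ with $D^sw\in L^\infty(R_j(u))$, and then runs the Minty trick with $w=u\pm\lambda v$, $v\in\mathcal D(\Omega)$, invoking Lemma \ref{paraMinty} for the passage $\lambda\to 0$; this yields $\iint_{R_j(u)}(m(D^su)-\phi)D^sv\,d\nu_n=0$, after which $j\to\infty$ and two layers of density (norm density of $\mathcal D(\Omega)$ in $W^s_0E_M(\Omega)$, then $\sigma$-density of $W^s_0E_M(\Omega)$ in $W^s_0L_M(\Omega)$) finish the job. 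You instead perturb by $\theta\chi_A$ rather than by $\lambda D^sv$, which lets you test against arbitrary indicator directions and conclude the \emph{pointwise} identity $m(D^su)=\phi$ $\nu_n$-a.e.\ directly --- a stronger statement that makes the duality equality immediate and bypasses both Lemma \ref{paraMinty} and the density steps. The paper's route, on the other hand, stays entirely within the function-space framework (test directions of the form $D^sv$) and only asserts equality in $(W^s_0L_M(\Omega))^*$, which is all that is strictly needed downstream. Your integrability bookkeeping is clean: the splitting over $A$, $R_j(u)\setminus A$, $R_j(u)^c$ is legitimate since $m(W)$ is bounded with compact support and $\phi D^su\in L^1(\nu_n)$ by H\"older, and the $j\to\infty$ limit is harmless for exactly the reason you identify.
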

\begin{proof}
Let $w\in W_0^sL_M(\Omega)$ be such that $D^sw\in L^{\infty}(R_j(u),\nu_n)$. For $l\geq j$ we take 

$$
W\equiv D^sw \chi_{R_j(u)}-D^s u\chi_{R_j(u)}+D^su \chi_{R_l(u)}=D^{s,j}w-D^{s,j}u+D^{s,l}u,
$$ 
now using $W$ in \eqref{cpseudo} we obtain
\begin{equation}\label{des3.5}
\iint_{\R^{2n}}(m(D^{s,j}w-D^{s,j}u+D^{s,l}u)-\phi)((D^{s,j}w-D^{s,j}u+D^{s,l}u)- D^su	)d\nu_n\geq 0
\end{equation}
The left hand side in the above inequality can be written as
\begin{align*}
&\iint_{\R^{2n}}(m(D^{s,j}w-D^{s,j}u+D^{s,l}u)-\phi)(D^{s,j}w-D^{s,j}u)\,d\nu_n+
\\
&\iint_{\R^{2n}}m(D^{s,j}w-D^{s,j}u+D^{s,l}u)(D^{s,l}u- D^su)\,d\nu_n-
\\
&\iint_{\R^{2n}}\phi (D^{s,l}u- D^su)\,d\nu_n=
\\
& I+II+III.
\end{align*}
The first integral $I$ is zero outside $R_j(u)$, therefore
\begin{align*}
I=&\iint_{R_j(u)}(m(D^{s,j}w-D^{s,j}u+D^{s,l}u)-\phi)(D^{s,j}w-D^{s,j}u)\,d\nu_n
\\
=&\iint_{R_j(u)}(m(D^sw)-\phi)(D^sw-D^su)\,d\nu_n.
\end{align*}
For the second integral $II$ we observe that $D^{s,l}u-D^su$ is zero inside $R_l(u)$, and outside $R_l(u)$ we have $m(D^{s,j}w-D^{s,j}u+D^{s,l}u)=m(0)=0$ and so $II=0$. The third integral $III$ goes to zero as $l\to+\infty$. Hence, letting $l\to\infty$ in \eqref{des3.5}, we obtain
\begin{equation}\label{des3.6}
\iint_{R_j(u)}(m(D^s w)-\phi)(D^sw-D^su)\,d\nu_n\geq 0,
\end{equation}
$\forall w\in W_0^sL_M(\Omega)$ with $D^sw\in L^\infty(R_j(u),\nu_n)$.

Now let $v\in \mathcal{D}(\Omega)$, using \eqref{des3.6} and Lemma \ref{paraMinty} with $\lambda>0$ first with $w=u+\lambda v$ and then $w=u-\lambda v$ we have
$$
\iint_{R_j(u)}\left (m(D^su)-\phi \right ) D^sv\,d\nu_n=0\qquad \forall v\in\mathcal{D}(\Omega),
$$
taking limit $j\to\infty$ we have 
$$
\iint_{\R^{2n}}\left (m(D^su)-\phi \right ) D^sv\,d\nu_n=0\qquad \forall v\in\mathcal{D}(\Omega),
$$
and by density $\forall v\in W_0^sE_M(\Omega)$. Using the density of $W^s_0E_M(\Omega)$ in $W^s_0L_M(\Omega)$ with respect to the $\sigma(L_M\times L_M(\nu_n),L_{\bar{M}}\times L_{\bar{M}}(\nu_n))$ topology we conclude the proof.
\end{proof}
With this preliminaries we are ready to prove the pseudomonotonicity.
\begin{proof}[Proof of Theorem \ref{pseudo}]
Given a sequence $\{u_i\}_{i\in\N}\subset \Dom((-\Delta_m)^s)$ such that $u_i\to u\in W_0^sL_M(\Omega)$ for $\sigma(W_0^sL_M(\Omega),W^{-s}E_{\bar{M}}(\Omega))$, $(-\Delta_m)^su_i\to f\in W^{-s}L_{\bar{M}}(\Omega)$ for $\sigma(W^{-s}L_{\bar{M}}(\Omega),W_0^sE_M(\Omega))$ and 
\begin{equation}\label{limsup}
\limsup_{i\to \infty}\langle (-\Delta_m)^s u_i,u_i\rangle\leq \langle f,u\rangle.
\end{equation}
We must prove that $u\in \Dom((-\Delta_m)^s),(-\Delta_m)^s u=f$ and $\langle (-\Delta_m)^s u_i,u_i\rangle\to  \langle f,u\rangle$ if $i\to \infty$.

We prove first that the sequence $\{m(D^su_i)\}_{i\in\N}$ remains bounded in $L_{\bar{M}}(\nu_n)$. 

Using Young's inequality we have
$$
m(D^su_i)D^su_i=\bar{M}(m(D^su_i))+M(D^su_i), 
$$
then
\begin{align*}
\iint_{\R^{2n}}\bar{M}(m(D^su_i))\,d\nu_n &\leq \iint_{\R^{2n}}m(D^su_i)D^su_i\,d\nu_n 
\\
&=\langle (-\Delta_m)^s u_i,u_i\rangle
\\
&\leq|\langle (-\Delta_m)^s u_i,u_i\rangle|\leq C,
\end{align*}
by \eqref{limsup}. Therefore the sequence $\{m(D^su_i)\}_{i\in\N}$ is bounded in $L_{\bar{M}}(\nu_n)$ hence there exists a subsequence that we still denote $\{m(D^s u_i)\}_{i\in\N}$ and $\phi\in L_{\bar{M}}(\nu_n)$ such that $m(D^su_i)\to \phi$ in $\sigma(L_{\bar{M}}(\nu_n),E_M(\nu_n))$ combining this fact with the convergence $(-\Delta_m)^su_i\to f\in W^{-s}L_{\bar{M}}(\Omega)$ for $\sigma(W^{-s}L_{\bar{M}}(\Omega),W_0^sE_M(\Omega))$ implies that for any $v\in W_0^sE_M(\Omega)$,
\begin{align*}
\langle f,v \rangle	&=\lim_{i\to\infty}\iint_{\R^{2n}}m(D^su_i) D^sv\,d\nu_n
\\
&=\iint_{\R^{2n}}\phi D^sv\,d\nu_n.
\end{align*}
This formula togheter with the density of $W^s_0E_M(\Omega)$ in $W^s_0L_M(\Omega)$ with respect to the $\sigma(L_M\times L_M(\nu_n),L_{\bar{M}}\times L_{\bar{M}}(\nu_n))$ topology allow us to extend $f$ to the space $W_0^sL_M(\Omega)$.

Let $W\in L^{\infty}(\R^{2n},d\nu_n)$ with compact support, using the monotonicity property of $(-\Delta_m)^s$ 
\begin{equation}\label{mono}
\iint_{\R^{2n}}(m(D^su_i)-m(W))(D^su_i-W)\,d\nu_n\geq 0.	
\end{equation}
All the above discussion allow us to pass to the limit in \eqref{mono} and we obtain
$$
\iint_{\R^{2n}}(\phi-m(W))(D^su-W)\,d\nu_n\geq 0.
$$
It then follows from Lemma \ref{parapseudo} that $m(D^su)=\phi$ in $(W_0^sL_{M}(\Omega))^*$ and so by Young's inequality
\begin{align*}
\iint_{\R^{2n}}\phi D^su\,d\nu_n
&=\iint_{\R^{2n}}m(D^su)D^su\,d\nu_n
\\
&=\iint_{\R^{2n}}\bar{M}(m(D^su))\,d\nu_n+\iint_{\R^{2n}}M(D^su)\,d\nu_n
\\
&\geq \iint_{\R^{2n}}\bar{M}(m(D^su))\,d\nu_n
\end{align*}
the above says that $m(D^su)\in L_{\bar{M}}(\nu_n)$ and $u\in \Dom((-\Delta_m)^s)$. Also
$$
\langle f,v \rangle=\iint_{\R^{2n}}\phi D^sv\,d\nu_n=\iint_{\R^{2n}}m(D^su)D^sv\,d\nu_n=\langle (-\Delta_m )^s u,v \rangle,  
$$
for all $v\in W_0^sL_M(\Omega)$, then $(-\Delta_m)^su=f$ in $(W_0^sL_M(\Omega))^*$.

Finally, we prove that $\langle (-\Delta_m)^s u_i,u_i\rangle\to  \langle f,u\rangle$ for $i\to \infty$. Let 
$$
L=\liminf_{i\to \infty}\iint_{\R^{2n}}m(D^su_i)D^su_i\,d\nu_n,
$$
we only need to prove that $L\geq \langle f,u \rangle$. Using the monotonicity property again we have
$$
\iint_{R^{2n}}(m(D^su_i)-m(D^{s,j}u))(D^su_i-D^{s,j}u)\,d\nu_n\geq 0,
$$
where $D^{s,j}u=D^su\chi_{R_j(u)}$ taking limit in $i$ and rewriting we obtain
$$
L\geq \iint_{\R^{2n}}m(D^{s,j}u)(D^su-D^{s,j}u)\,d\nu_n+\iint_{\R^{2n}}m(D^su)D^{s,j}u\,d\nu_n.
$$
In $R_j(u)$ the factor $D^su-D^{s,j}u=0$, and in $(R_j(u))^c$ the factor $m(D^{s,j}u)=m(0)=0$ so the first integral in the above inequality is zero, then 
$$
L\geq \iint_{R_j(u)}m(D^su)D^su\,d\nu_n
$$
for arbitrary $j$ therefore
$$
L\geq \iint_{\R^{2n}}m(D^su)D^su\,d\nu_n=\langle f,u\rangle.
$$
This concludes the proof of the theorem.
\end{proof}

\section{The eigenvalue problem}\label{section.4}
In this section we study the main result of the paper, namely the existence of a sequence $\{(\lambda_k, u_k)\}_{k\in\N}$ of eigenpairs of the equation \eqref{intro.2} and, moreover, $\lambda_k\to\infty$ as $k\to\infty$. 

We say that $(\lambda,u)$ is an eigenpair of \eqref{intro.2} if 
\begin{equation}\label{soldebil}
\iint_{\R^{2n}}	m(D^su)D^s \phi\,d\nu_n=\lambda \int_{\Omega} g(u)\phi\,dx,
\end{equation}
for every $\phi\in \mathcal{D}(\Omega)$, provided that both integrals are defined.

The strategy of the proof is to apply the Ljusternik-Schnirelmann method that have been proved to be succesful in previous works. However, the lack of reflexivity of the spaces involved prevent us to apply directly the Ljusternik-Schnirelmann method. This fact was already observe by \cite{Ti} where the author is able to reduce the problem to a finite dimensional one and then pass to the limit. Here we apply the same idea to the context of fractional order spaces.

In fact after the work performed in the previous sections the ideas of \cite{Ti} can be applied almost straightforward. Howewer we include the details for the reader convenience and to make the paper selfcontained.  

Therefore we look for the existence of a sequence $(\lambda_k, u_k)\subset \R\times W^s_0E_M(\Omega)$ of eigenpairs for the problem \eqref{intro.2}. Observe that, since $u_k\in W^s_0E_M(\Omega)\subset \Dom((-\Delta_m)^s)$, $m(D^su)\in L_{\bar{M}}(\nu_n)$. Hence the left hand side of \eqref{soldebil} is well defined forall $\phi\in W_0^sE_M(\Omega)$.

On the function $g\colon \R\to\R$ we assume that is an odd and continuous function satisfying $g(t)t>0$ for all $t\neq 0$ and
\begin{equation}\label{g}
|g(t)|\le a_1 + a_2 m(a_3t)\qquad \text{ for all }t\geq 0,
\end{equation}
where $a_1,a_2$ and $a_3$ are positive constants. Observe that if $u\in W_0^s E_M(\Omega)$ the right hand side of \eqref{soldebil} is well defined.

Therefore, the main result of this paper reads as follows:
\begin{thm}\label{teo.main}
Let $\Omega\subset \R^n$ be an open and bounded domain with the segment property. Then there exists a sequence of eigenpairs $\{(\lambda_k, u_k)\}_{k\in\N}\subset \R_+\times W_0^s L_M(\Omega)$ of \eqref{intro.2}. Moreover, $\lambda_k \to +\infty$ and  $u_k\to 0$ in the topology $\sigma(W_0^sL_M(\Omega), W^{-s}E_{\bar{M}}(\Omega))$ as $k\to\infty$.
\end{thm}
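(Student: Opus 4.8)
The plan is to run the classical Ljusternik--Schnirelmann scheme on the level set of the ``energy'' constraint, but carried out in the non-reflexive complementary system \eqref{CS} by passing through finite-dimensional approximations à la \cite{Ti}. First I would set up the variational framework: define $J,G\colon W^s_0E_M(\Omega)\to\R$ by $J(u)=\iint_{\R^{2n}}M(D^su)\,d\nu_n$ and $G(u)=\int_\Omega \tilde G(u)\,dx$ where $\tilde G(t)=\int_0^t g(\tau)\,d\tau$; the growth hypothesis \eqref{g} together with $g(t)t>0$ makes $G$ well-defined, $C^1$, even, and positive away from $0$ on $W^s_0E_M(\Omega)$, while $J$ is convex, even, weakly-$*$ lower semicontinuous, and Gâteaux differentiable with $J'(u)=(-\Delta_m)^s u$ on $\Dom((-\Delta_m)^s)$ by the representation formula of the first theorem of Section 3. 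Eigenpairs of \eqref{intro.2} are exactly the constrained critical points of $J$ on the manifold $\mathcal{M}_c=\{u\in W^s_0E_M(\Omega)\colon G(u)=c\}$ for a fixed $c>0$, the Lagrange multiplier giving $1/\lambda$ (or $\lambda$, after rescaling).

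Next I would invoke Theorem \ref{teo.abstracto} applied to the complementary system \eqref{CS} with $V={\mathcal D}(\Omega)$: since the Poincaré inequality from \cite[Corollary 6.2]{FBS} makes $\|D^su\|_{M,\nu_n}$ an equivalent norm, and the duality of norms in the pairing holds, we obtain odd, continuous projections $P_k\colon W^s_0E_M(\Omega)\to W^s_0E_M(\Omega)$ with finite-dimensional ranges $V_k\subset{\mathcal D}(\Omega)$, satisfying the convergence properties $u_k\to u$ for $\sigma(W^s_0L_M,W^{-s}E_{\bar M})$ $\Rightarrow$ $P_k(u_k)\to u$ for the same topology, and $u_k\to u$ strongly $\Rightarrow$ $\|P_k(u_k)\|\to\|u\|$. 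On each finite-dimensional slice I would work with the compact set $\mathcal{M}_c\cap V_k$ (which is genuinely a finite-dimensional $C^1$-manifold, compact because $G$ is coercive on bounded sets and $J$ confines it) and define the standard min-max values
\begin{equation}\label{minmax}
\lambda_{j}^{(k)}:=\inf_{A\in\mathcal{A}_j^{(k)}}\sup_{u\in A} \frac{1}{J(u)},
\end{equation}
where $\mathcal{A}_j^{(k)}$ is the family of compact symmetric subsets $A\subset \mathcal{M}_c\cap V_k$ with Krasnoselskii genus $\ge j$. For each fixed $k$, finite-dimensional Ljusternik--Schnirelmann theory gives $j$-many critical points $u_j^{(k)}$ of $J|_{\mathcal{M}_c\cap V_k}$, i.e. solutions of the Galerkin problem $\langle(-\Delta_m)^su_j^{(k)},v\rangle=\mu_j^{(k)}\int_\Omega g(u_j^{(k)})v\,dx$ for all $v\in V_k$, with $\mu_j^{(k)}$ ordered and bounded below, and with uniform (in $k$) bounds on $J(u_j^{(k)})$ and on $\langle(-\Delta_m)^su_j^{(k)},u_j^{(k)}\rangle$ coming from a comparison of \eqref{minmax} across $k$ using the $P_k$'s.

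The main obstacle --- and the heart of the argument --- is the passage to the limit $k\to\infty$ with $j$ fixed. From the uniform bounds one extracts (using the $\sigma(W^s_0L_M,W^{-s}E_{\bar M})$-compactness of norm-bounded sets, which holds because $W^{-s}E_{\bar M}(\Omega)$ is separable and the unit ball is weak-$*$ sequentially compact) a subsequence with $u_j^{(k)}\to u_j$ for $\sigma(W^s_0L_M,W^{-s}E_{\bar M})$, $(-\Delta_m)^su_j^{(k)}\to f_j$ for $\sigma(W^{-s}L_{\bar M},W^s_0E_M)$, and $\mu_j^{(k)}\to\mu_j$. The crux is to verify the inequality $\limsup_k\langle(-\Delta_m)^su_j^{(k)},u_j^{(k)}\rangle\le\langle f_j,u_j\rangle$: this follows by testing the Galerkin equation against $P_k$ of a suitable fixed test function, using the compactness of the Nemytskii operator $u\mapsto g(u)$ from $W^s_0L_M(\Omega)$ into $L_{\bar M}$ (compact Sobolev-type embedding, since $\Omega$ is bounded) to pass $\int_\Omega g(u_j^{(k)})P_k(v)\,dx\to\int_\Omega g(u_j)v\,dx$, and then the last two properties of the $P_k$. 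Once this is checked, the pseudomonotonicity Theorem \ref{pseudo} applies and yields $u_j\in\Dom((-\Delta_m)^s)$, $(-\Delta_m)^su_j=f_j$, and $\langle(-\Delta_m)^su_j^{(k)},u_j^{(k)}\rangle\to\langle f_j,u_j\rangle$; combined with the limit of the Galerkin equations (density of $\bigcup_k V_k$ in $W^s_0E_M$ and the extension of $f_j$ via the $\sigma$-density of $W^s_0E_M$ in $W^s_0L_M$) this shows $(\lambda_j,u_j):=(\mu_j^{-1},u_j)$ is an eigenpair. Finally, $\lambda_j\to\infty$: the min-max values $\lambda_j^{(k)}$ are nondecreasing in $j$ and, by a genus/dimension argument together with the uniform lower bound on $J$ over sets of large genus (here one uses that on a set of genus $\ge j$ inside $\mathcal{M}_c$ the quantity $\sup J$ must grow, because $M$ superlinear forces $J$ to separate points of high-genus sets), one gets $\lambda_j^{(k)}\to 0$ as $j\to\infty$ uniformly in $k$, hence $\mu_j\to0$ and $\lambda_j\to\infty$; the extra conclusion $u_j\to 0$ for $\sigma(W^s_0L_M,W^{-s}E_{\bar M})$ follows since $G(u_j)=c$ forces $J(u_j)\to\infty$ along the sequence is impossible while $\mu_j\to0$... more precisely, boundedness of $\langle(-\Delta_m)^su_j,u_j\rangle=\mu_j\int g(u_j)u_j$ together with $\mu_j\to 0$ and the constraint forces $\|D^su_j\|_{M,\nu_n}\to 0$, giving the claimed convergence.
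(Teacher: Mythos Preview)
Your overall architecture---finite-dimensional Ljusternik--Schnirelmann on a Galerkin scheme, passage to the limit via the pseudomonotonicity Theorem \ref{pseudo}, and use of the projections from Theorem \ref{teo.abstracto}---matches the paper. The gap is the choice of constraint manifold: you constrain the lower-order functional $G$, whereas the paper constrains $\mathcal{M}_s(u)=\iint M(D^su)\,d\nu_n=1$ and takes $c_i=\sup_K\inf_K \mathcal{G}$ over genus classes. This is not cosmetic. With only $g$ odd, continuous, $g(t)t>0$ and the upper bound \eqref{g}, nothing forces $G$ to be coercive, so $\{G=c\}\cap V_k$ need not be bounded (your phrase ``$J$ confines it'' is not an argument), and already the finite-dimensional step is unjustified. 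More fatally, $\mathcal{G}$ is \emph{continuous} along $\sigma(W^s_0L_M,W^{-s}E_{\bar M})$-convergent sequences by the compact embedding of Theorem \ref{teo.compact}; hence eigenfunctions sitting on $\{G=c\}$ can never converge to $0$ in that topology, contradicting the very conclusion of the theorem. Your last paragraph in fact runs into this: you derive $\|D^su_j\|_{M,\nu_n}\to 0$, which via Poincar\'e forces $G(u_j)\to 0$, incompatible with $G(u_j)=c$. (There is also a reciprocal slip: in your Galerkin equation $\langle(-\Delta_m)^su,v\rangle=\mu\int g(u)v$ the multiplier $\mu$ \emph{is} the eigenvalue $\lambda$ of \eqref{intro.2}, so $\mu_j\to 0$ would give $\lambda_j\to 0$, not $\infty$.)

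The paper's constraint $\{\mathcal{M}_s=1\}$ avoids both problems: it is automatically norm-bounded (so the finite-dimensional slices $\mathcal{N}^s_k$ are compact and Lemma \ref{LSDdimfin} applies with no extra hypothesis on $g$), and $\mathcal{M}_s$ is only weakly lower semicontinuous, so the limit eigenfunctions $\bar u_i$ can satisfy $\mathcal{M}_s(\bar u_i)=1$ while $\bar u_i\to 0$ weakly. The endgame is then clean: $c_i\to 0$ (Lemma \ref{cito0}, using the $P_k$'s), hence $\mathcal{G}(\bar u_i)\to 0$ and $\bar u_i\to 0$ in $E_M(\Omega)$; testing the equation with $\bar u_i$ and using $\iint m(D^s\bar u_i)D^s\bar u_i\,d\nu_n\ge \mathcal{M}_s(\bar u_i)=1$ gives $\bar\lambda_i\ge \big(\int_\Omega g(\bar u_i)\bar u_i\,dx\big)^{-1}\to\infty$.
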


We consider the even functionals $\M_s\colon D_{\M_s}\to\R$ and $\G\colon D_\G\to\R$ defined as
\begin{align}\label{M}
\M_s(u)&:=\iint_{\R^{2n}} M(D^su)\, d\nu_n\\
\G(u)&:= \int_\Omega G(u)\, dx,
\end{align}
where $G(t)=\int_0^{|t|}g(\tau)\,d\tau$ and
$$
D_{\M_s}:=\{u\in W^s_0L_M(\Omega)\colon \M(u)<\infty\} \quad \text{and}\quad D_\G:=\{u\in W^s_0L_M(\Omega)\colon \G(u)<\infty\}.
$$
It is clear that $W_0^{s}E_M(\Omega)\subset \Dom((-\Delta_m)^s)\subset D_{\M_s}\subset W_0^sL_M(\Omega)$ and both functionals $\M_s$ and $\G$ vanish only at zero.  

Let $B=\{\phi_1,\phi_2,\dots\}\subset {\mathcal D}(\Omega)$ be a countable linearly independent subset in $W_0^sE_M(\Omega)$ such that the linear hull is norm dense, and we define the sets $V$ and $V_k$ as the linear hull of the sets $B$ and $B_k:=\{\phi_1,\phi_2,\dots,\phi_k\}$ respectively. We denote the continuous pairing between $W^{-s}L_{\bar{M}}(\Omega)$ and $W^s_0E_M(\Omega)$ by $\langle \cdot, \cdot \rangle$, and the one between $V_k$ and $(V_k)^*$ by $\langle \cdot, \cdot \rangle_k$.

A straightforward calculation gives $\M_s, \G \in  C^1(V_k)$ and
\begin{align*}
\langle \M_s'(u),v\rangle_k &=\iint_{\R^{2n}}m(D^su) D^sv\,d \nu_n\qquad &\text{for all } u,v\in V_k
\\
\langle\G'(u),v\rangle_k &=\int_\Omega g(u) v\,dx\qquad &\text{for all } u,v\in V_k,
\end{align*}
for each $k=1,2,\dots$ 

By the growth condition \eqref{g} and the compact immersion $W^s_0L_M(\Omega) \subset\subset E_M(\Omega)$ (see Theorem \ref{teo.compact}),  it follows that $D_\G=W_0^sL_M(\Omega)$ and 
\begin{align*}
\G(u_k) &\to \G(u)\\
\langle \G'(u_k),u_k\rangle	&\to \int_\Omega g(u) u\, dx\\
\langle \G'(u_k),v\rangle &\to \int_\Omega g(u) v\, dx,
\end{align*}
whenever $u_k\in V_k, u_k\to u\in W_0^sL_M(\Omega)$ in $\sigma(W_0^sL_M(\Omega),W^{-s}E_{\bar{M}}(\Omega)),$ and $v\in V$.

A first step to attack the problem \eqref{intro.2} is to consider the following problem on the finite dimensional space $V_k$
\begin{equation}\label{eigenfinito}
\begin{split}
&\mathcal{M}_s^{'}(u)=\lambda \mathcal{G}'(u)\text{ in }V_k^*
\\
&u\in \mathcal{N}_k^s,\lambda\in \R, 
\end{split}
\end{equation}
where $\mathcal{N}_k^s=\{u\in V_k\colon \mathcal{M}_s(u)=1\}$.

Now, in order to study the problem \eqref{eigenfinito},
we use the following notation
\begin{align*}
\mathcal{N}^s	&=\{u\in W_0^sE_M(\Omega)\colon \M_s(u)=1\}
\\
\mathcal{K}_i^s&=\{K\subset \mathcal{N}^s\text{ compact and symmetric }\colon gen(K)\geq i\}
\\
\mathcal{K}_{i,k}^s&=\{K\subset \mathcal{N}_k^s\text{ compact and symmetric }\colon gen(K)\geq i \}
\\
c_i&=\sup_{K\in \mathcal{K}_i^s}\inf_{u\in K}\G(u)
\\
c_{i,k}&=\sup_{K\in \mathcal{K}_{i,k}^s}\inf_{u\in K}\G(u),
\end{align*}
observe that the critical levels $c_{i,k}$ are increasing in $k$ and choosing $S_i$ to be the unit sphere of $V_i$, we have that $gen(S_i)=i$ and $\inf_{u\in S_i}\mathcal{G}(u)>0$ therefore $c_{i,k}>0$ when $i\leq k$. If $i>k$ then $\mathcal{K}_{i,k}^s$ are empty and so $c_{i,k}=0$. The next lemma provides solution for the problem \eqref{eigenfinito}. 
\begin{lem}\label{LSDdimfin}
Let $i\in \N$ be given. Then there exist sequences $\{u_k\}_{k=i}^\infty\subset W_0^sE_M(\Omega)$, and $\{\lambda_k\}_{k=i}^\infty\subset (0,+\infty	)$ such that
\begin{align*}
u_k&\in \mathcal{N}_k^s\subset V_k
\\
\mathcal{M}_s^{'}(u_k)&=\lambda_k\mathcal{G}'(u_k)\qquad \text{in }V_k^{*}
\\
\mathcal{G}(u_k)&=c_{i,k},	
\end{align*}
for all $k=i,i+1,\dots$
\end{lem}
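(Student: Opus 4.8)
The plan is to apply the classical finite-dimensional Ljusternik--Schnirelmann theory on the compact $C^1$ manifold $\mathcal{N}_k^s\subset V_k$, using the Krasnoselskii genus as the topological index, and then to verify that the minimax values $c_{i,k}$ are critical levels. Fix $i\in\N$ and $k\geq i$. Since $V_k$ is finite-dimensional and $\M_s\in C^1(V_k)$ with $\M_s(u)>0$ for $u\neq 0$ together with $\M_s(tu)\to\infty$ as $t\to\infty$ (this last fact coming from the properties of the Young function $M$ and the equivalence of norms on $V_k$), the set $\mathcal{N}_k^s=\{u\in V_k\colon \M_s(u)=1\}$ is a compact $C^1$ submanifold of $V_k$, symmetric about the origin, provided $0\notin\mathcal{N}_k^s$ (which is clear) and $\M_s'(u)\neq 0$ on $\mathcal{N}_k^s$; the latter follows because $\langle\M_s'(u),u\rangle=\iint m(D^su)D^su\,d\nu_n\geq\iint M(D^su)\,d\nu_n + \iint\bar M(m(D^su))\,d\nu_n>0$ whenever $u\neq 0$, by the equality case of Young's inequality applied pointwise.

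Next I would set up the constrained variational framework. Restricting $\G$ to $\mathcal{N}_k^s$, I want critical points of $\G|_{\mathcal{N}_k^s}$; by the Lagrange multiplier rule in finite dimensions, any such critical point $u$ satisfies $\G'(u)=\mu\,\M_s'(u)$ in $V_k^*$ for some $\mu\in\R$, and testing against $u$ gives $\mu\langle\M_s'(u),u\rangle=\langle\G'(u),u\rangle=\int_\Omega g(u)u\,dx>0$ since $g(t)t>0$ and $u\neq 0$; as the left factor $\langle\M_s'(u),u\rangle$ is strictly positive, we get $\mu>0$, and then $\lambda_k:=\mu^{-1}>0$ yields $\M_s'(u)=\lambda_k\G'(u)$ as required. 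The key existence input is that $\G|_{\mathcal{N}_k^s}$ is a continuous (indeed $C^1$) even functional on a compact symmetric manifold, so the standard deformation-lemma argument of Ljusternik--Schnirelmann applies: for each $i\leq k$ the value
\begin{equation*}
c_{i,k}=\sup_{K\in\mathcal{K}_{i,k}^s}\inf_{u\in K}\G(u)
\end{equation*}
is a critical value of $\G|_{\mathcal{N}_k^s}$, and $c_{i,k}>0$ because, as already noted, taking $K=S_i\subset V_i$ the unit sphere gives $\mathrm{gen}(S_i)=i$ and $\inf_{S_i}\G>0$ by continuity and evenness of $\G$ together with $\G(u)>0$ for $u\neq 0$. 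One should note that the $\sup$ is attained (or attained up to the standard argument), since on the compact manifold $\mathcal{N}_k^s$ the collection of compact symmetric sets of genus $\geq i$ behaves well; if it is not literally attained one runs the usual deformation argument to show $c_{i,k}$ is critical.

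Finally I would collect the output: for each $k=i,i+1,\dots$ pick $u_k\in\mathcal{N}_k^s$ a critical point of $\G|_{\mathcal{N}_k^s}$ at level $c_{i,k}$, with associated multiplier $\lambda_k>0$, so that $\M_s'(u_k)=\lambda_k\G'(u_k)$ in $V_k^*$, $u_k\in\mathcal{N}_k^s\subset V_k$, and $\G(u_k)=c_{i,k}$; since $u_k\in V_k\subset\mathcal{D}(\Omega)\subset W_0^sE_M(\Omega)$, the sequence lies in $W_0^sE_M(\Omega)$ as claimed. The main obstacle, and the only place where care is genuinely needed, is checking that $\mathcal{N}_k^s$ is a well-behaved compact symmetric $C^1$-manifold and that $0$ is a regular value of $\M_s|_{V_k}$ on the level set --- i.e. the nonvanishing of $\M_s'$ on $\mathcal{N}_k^s$ --- and the compactness/coercivity that makes $\mathcal{N}_k^s$ compact; everything else (the deformation lemma, the genus properties, attainment of the minimax, positivity of $c_{i,k}$, positivity of the multiplier) is the standard finite-dimensional LS machinery applied verbatim. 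There is no compactness issue of Palais--Smale type here precisely because $V_k$ is finite-dimensional, which is the whole point of passing to this approximating problem.
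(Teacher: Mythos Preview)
Your proposal is correct and follows essentially the same approach as the paper: both reduce to the finite-dimensional Ljusternik--Schnirelmann theory on $V_k$, with the paper simply noting $\M_s,\G\in C^1(V_k)$, $\G(0)=0$, and that each ray from $0$ meets $\mathcal N_k^s$ (via $r(u)=1/\|D^su\|_{M,\nu_n}$) before citing Zeidler's theorem directly. You have spelled out more of the verification (compactness of $\mathcal N_k^s$, nonvanishing of $\M_s'$ via Young's inequality, positivity of the multiplier), but the underlying strategy is the same.
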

\begin{proof}
As we observe above for each $k$ we have $\M_s, \G \in  C^1(V_k)$ and $\mathcal{G}(0)=0$. 

On the other hand let $u\in V_k$ with $u\neq 0$ putting $r(u)=1/\|D^su\|_{M,\nu_n}$ we have

$$
\iint_{\R^{2n}}M(r(u) D^su )\,d\nu_n=1.
$$
Now from the finite dimensional Ljusternik-Schnirelmann theory (\cite[Theorem 2 and Corollary 7.1]{Zei1}) the lemma follows.
\end{proof}
We are interested in the study of the asymptotic behavior of the sequences $\{u_k\}_k$ and $\{\lambda_k\}_k$ in $k$ with fixed $i$. The following tools are needed first.
\begin{lem}\label{conv1}
Let $G_k\colon \R^{2n}\to \R$ be a sequence of functions such that $G_k\to G$ $\nu_n-$a.e. for some function $G\colon \R^{2n}\to \R$. Suppose that there exists a sequence of  functions $\{\Phi_k\}_k$ in $L^1(\R^{2n},\nu_n)$ such that 
\begin{equation}\label{cotaparaGk}
|G_k|\leq \Phi_k\quad   \nu_n-\text{a.e. in }\R^{2n}
\end{equation}

$\Phi_k\to \Phi$ $\nu_n-$a.e. in $\R^{2n}$ and 
$$
\iint_{\R^{2n}}\Phi_k\,d\nu_n\to \iint_{\R^{2n}}\Phi\,d\nu_n
$$
for some function $\Phi\in L^1(\R^{2n},\nu_n)$. Then 
$$
\iint_{\R^{2n}}G_k\,d\nu_n\to \iint_{\R^{2n}}G\,d\nu_n.
$$
\end{lem}
\begin{proof}
The condition \eqref{cotaparaGk} implies $ G_k+\Phi_k,\Phi_k-G_k\geq 0$ a.e. in $\R^{2n}$, and by Fatou's lemma we obtain 
\begin{align*}
\iint_{\R^{2n}}G\,d\nu_n+\iint_{\R^{2n}}\Phi\,d\nu_n&=\iint_{\R^{2n}}(G+\Phi)\,d\nu_n
\\
&\leq  \liminf_{k\to\infty}\iint_{\R^{2n}}(G_k+\Phi_k)\,d\nu_n
\\
&\leq\liminf_{k\to\infty}\iint_{\R^{2n}}G_k\,d\nu_n+\iint_{\R^{2n}}\Phi\,d\nu_n,
\end{align*} 
from where we obtain 
$$
\iint_{\R^{2n}}G\,d\nu_n\leq \liminf_{k\to\infty}\iint_{\R^{2n}}G_k\,d\nu_n. 
$$	
In similar way from $\Phi_k-G_k\geq 0$ a.e. in $\R^{2n}$ we obtain
\begin{align*}
\iint_{\R^{2n}}\Phi\,d\nu_n-\iint_{\R^{2n}}G\,d\nu_n &\leq \iint_{\R^{2n}}\Phi\,d\nu_n+\liminf_{k\to\infty}\left (-\iint_{\R^{2n}}G_k\,d\nu_n \right )
\\
&=\iint_{\R^{2n}}\Phi\,d\nu_n-\limsup_{k\to\infty}\iint_{\R^{2n}}G_k\,d\nu_n ,
\end{align*}
and so 
$$
\limsup_{k\to\infty}\iint_{\R^{2n}}G_k\,d\nu_n \leq \iint_{\R^{2n}}G\,d\nu_n.
$$
The proof is completed.
\end{proof}
\begin{lem}\label{conv2}
Let $G_k\colon \R^{2n}\to \R$ be a sequence of a nonnegative  functions in $L^1(\R^{2n},\nu_n)$ and $G\in L^1(\R^{2n},\nu_n)$ such that $G_k\to G$ $\nu_n-$a.e. in $\R^{2n}$ and 
$$\iint_{\R^{2n}} G_k\,d\nu_n\to \iint_{\R^{2n}}G\,d\nu_n.
$$	
Then $G_k\to G$ in $L^1(\R^{2n},\nu_n)$.
\end{lem}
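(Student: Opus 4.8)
The plan is to invoke the generalized dominated convergence theorem (Lemma \ref{conv1}) with a cleverly chosen dominating sequence built out of the $G_k$ themselves. Concretely, I would apply Lemma \ref{conv1} to the sequence $H_k := |G_k - G|$, which converges to $0$ $\nu_n$-a.e., together with the dominating sequence $\Phi_k := G_k + G$. Indeed, since $G_k, G \geq 0$ we have $|G_k - G| \leq G_k + G = \Phi_k$ $\nu_n$-a.e., and $\Phi_k \to 2G =: \Phi$ $\nu_n$-a.e. with $\Phi \in L^1(\R^{2n},\nu_n)$. The crucial hypothesis $\iint_{\R^{2n}} \Phi_k\, d\nu_n \to \iint_{\R^{2n}} \Phi\, d\nu_n$ is exactly $\iint_{\R^{2n}} G_k\, d\nu_n + \iint_{\R^{2n}} G\, d\nu_n \to \iint_{\R^{2n}} 2G\, d\nu_n$, which holds by the assumed convergence of the integrals of $G_k$. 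Lemma \ref{conv1} then yields
$$
\iint_{\R^{2n}} |G_k - G|\, d\nu_n \to \iint_{\R^{2n}} 0\, d\nu_n = 0,
$$
which is precisely the statement that $G_k \to G$ in $L^1(\R^{2n},\nu_n)$.

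The only point requiring minimal care is that all the functions involved are genuinely in $L^1(\R^{2n},\nu_n)$ so that Lemma \ref{conv1} applies: $G_k \in L^1$ and $G \in L^1$ by hypothesis, hence $\Phi_k = G_k + G \in L^1$ and $H_k = |G_k - G| \le \Phi_k \in L^1$, so $H_k \in L^1$ as well. There is no real obstacle here; this is essentially the classical argument showing that a.e. convergence plus convergence of norms implies $L^1$ convergence for nonnegative functions, recast through the already-established Lemma \ref{conv1}. If one prefers not to route through Lemma \ref{conv1}, the same conclusion follows directly from Fatou's lemma applied to $G_k + G - |G_k - G| \ge 0$: one gets
$$
\iint_{\R^{2n}} 2G\, d\nu_n \le \liminf_{k\to\infty} \iint_{\R^{2n}} \big(G_k + G - |G_k - G|\big)\, d\nu_n = \iint_{\R^{2n}} 2G\, d\nu_n - \limsup_{k\to\infty} \iint_{\R^{2n}} |G_k - G|\, d\nu_n,
$$
forcing $\limsup_{k\to\infty} \iint_{\R^{2n}} |G_k - G|\, d\nu_n \le 0$, and hence the limit is $0$. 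I would present the proof via Lemma \ref{conv1} since it keeps the exposition uniform with the preceding lemma, but either route is short and self-contained.
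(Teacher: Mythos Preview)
Your proof is correct and follows exactly the paper's own argument: define $\Phi_k = G_k + G$, verify the hypotheses of Lemma \ref{conv1} for the sequence $|G_k - G|$, and conclude. The alternative direct Fatou argument you sketch is also fine but is not the route taken in the paper.
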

\begin{proof}
If we define $\Phi_k:=G_k+G$ then $\Phi_k\in L^1(\R^{2n},\nu_n),\Phi_k\to 2G$ a.e. in $\R^{2n}$ and 
$$
\iint_{\R^{2n}}\Phi_k\,d\nu_n\to \iint_{\R^{2n}}2G\,d\nu_n.
$$
We observe that $|G_k-G|\leq \Phi_k$	a.e. in $\R^{2n}$, therefore by Lemma \ref{conv1}
$$
\iint_{\R^{2n}}|G_k-G|\,d\nu_n\to 0,
$$
and so $G_k\to G$ in $L^1(\R^{2n},\nu_n)$.
\end{proof}
We are in position to give the asymptotic behavior of the sequence $\{(\lambda_k,u_k)\}_k$ . 
\begin{thm}\label{teouilambdai}
Let $i\in \N$ be fixed and the sequences $\{u_k\}_{k=i}^\infty\in W_0^sE_M(\Omega)$ and $\{\lambda_k\}_{k=i}^\infty\in (0,+\infty)$ as given by Lemma \ref{LSDdimfin}. Then there exists $\bar{u}_i\in \Dom((-\Delta_m)^s)$ and $\bar{\lambda}_i\in (0,+\infty)$ such that up to a subsequence, $\lambda_k\to \bar{\lambda}_i$ and $u_k\to \bar{u}_i$ for $\sigma(W_0^sL_M(\Omega),W^{-s}E_{\bar{M}}(\Omega))$. Moreover, $\mathcal{M}_s(\bar{u}_i)=1,\mathcal{G}(\bar{u}_i)=\lim_{k\to\infty}c_{i,k}$, and $(\bar{\lambda}_i,\bar{u}_i)$ is an eigenpair for \eqref{intro.2}.
\end{thm}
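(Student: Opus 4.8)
The plan is to pass to the limit $k\to\infty$ in the finite-dimensional eigenvalue problems \eqref{eigenfinito} produced by Lemma \ref{LSDdimfin}, and to identify the limit through the pseudomonotonicity of $(-\Delta_m)^s$ established in Theorem \ref{pseudo}. First, the normalization $\M_s(u_k)=\iint_{\R^{2n}}M(D^su_k)\,d\nu_n=1$ together with the convexity of $M$ forces $\|D^su_k\|_{M,\nu_n}=1$, so $\{u_k\}$ sits on the unit sphere of $W^s_0L_M(\Omega)$. Since $W^s_0L_M(\Omega)=(W^{-s}E_{\bar M}(\Omega))^*$ by the complementary system \eqref{CS} and $W^{-s}E_{\bar M}(\Omega)$ is separable, Banach--Alaoglu yields a subsequence (not relabeled) with $u_k\to\bar u_i$ for $\sigma(W^s_0L_M(\Omega),W^{-s}E_{\bar M}(\Omega))$. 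The levels $c_{i,k}$ are nondecreasing in $k$ and bounded above (by a standard argument using \eqref{g} and the compact embedding $W^s_0L_M(\Omega)\subset\subset E_M(\Omega)$), hence $c_{i,k}\to c'_i$ with $c'_i\ge c_{i,i}>0$. Because $u_k\in V_k$ and $u_k\to\bar u_i$ weak-$*$, the continuity properties of $\G$ recorded in Section \ref{section.4} give $\G(\bar u_i)=\lim_kc_{i,k}=c'_i>0$, so $\bar u_i\not\equiv0$; moreover $\int_\Omega g(u_k)u_k\,dx\to\int_\Omega g(\bar u_i)\bar u_i\,dx>0$ since $g(t)t>0$.

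I would then bound the Lagrange multipliers. Testing \eqref{eigenfinito} with $u_k$ and using the equality case in \eqref{young},
\[
\lambda_k\int_\Omega g(u_k)u_k\,dx=\iint_{\R^{2n}}m(D^su_k)D^su_k\,d\nu_n=\iint_{\R^{2n}}\bar M(m(D^su_k))\,d\nu_n+1\geq 1.
\]
As the factor $\int_\Omega g(u_k)u_k\,dx$ converges to a positive number it is bounded, whence $\lambda_k\ge c>0$. The delicate point --- and the main obstacle of the proof --- is the \emph{uniform upper bound} on $\lambda_k$, equivalently on $\iint_{\R^{2n}}m(D^su_k)D^su_k\,d\nu_n$: without the $\Delta_2$ condition this quantity is not controlled by $\M_s(u_k)=1$ (the estimate $m(t)t\le CM(t)$ is itself of $\Delta_2$ type), so the bound has to be wrung out of the finite-dimensional min-max construction of Lemma \ref{LSDdimfin} --- concretely from the boundedness of the levels $c_{i,k}$ together with the growth condition \eqref{g} --- as in \cite{Ti}. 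Granting it, $\{\lambda_k\}$ is bounded and, along a further subsequence, $\lambda_k\to\bar\lambda_i\in(0,+\infty)$; moreover $\iint_{\R^{2n}}\bar M(m(D^su_k))\,d\nu_n$ is bounded, so $\{m(D^su_k)\}$ is bounded in $L_{\bar M}(\nu_n)=(E_M(\nu_n))^*$ and, up to a subsequence, $m(D^su_k)\to\phi$ for $\sigma(L_{\bar M}(\nu_n),E_M(\nu_n))$.

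It remains to identify the limit. For every $v\in V=\bigcup_kV_k$, passing to the limit in $\iint_{\R^{2n}}m(D^su_k)D^sv\,d\nu_n=\lambda_k\int_\Omega g(u_k)v\,dx$ gives $\iint_{\R^{2n}}\phi D^sv\,d\nu_n=\bar\lambda_i\int_\Omega g(\bar u_i)v\,dx$, and by density of $V$ in $W^s_0E_M(\Omega)$ together with the equi-boundedness of $\{m(D^su_k)\}$ in $L_{\bar M}(\nu_n)$ and of $\{g(u_k)\}$ in $L_{\bar M}(\Omega)$ (from \eqref{g} and the compact embedding) this extends to all $v\in W^s_0E_M(\Omega)$. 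Hence $(-\Delta_m)^su_k\to f:=\bar\lambda_i\,g(\bar u_i)$ for $\sigma(W^{-s}L_{\bar M}(\Omega),W^s_0E_M(\Omega))$, and since $\langle(-\Delta_m)^su_k,u_k\rangle=\lambda_k\int_\Omega g(u_k)u_k\,dx\to\langle f,\bar u_i\rangle$, the three hypotheses of Theorem \ref{pseudo} are in force. That theorem yields $\bar u_i\in\Dom((-\Delta_m)^s)$, $(-\Delta_m)^s\bar u_i=f=\bar\lambda_i g(\bar u_i)$ --- so \eqref{soldebil} holds with $(\lambda,u)=(\bar\lambda_i,\bar u_i)$ and $(\bar\lambda_i,\bar u_i)$ is an eigenpair of \eqref{intro.2} --- and $\langle(-\Delta_m)^su_k,u_k\rangle\to\langle f,\bar u_i\rangle$. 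Rewriting this last limit via Young's equality,
\[
\iint_{\R^{2n}}\bar M(m(D^su_k))\,d\nu_n+1\ \longrightarrow\ \iint_{\R^{2n}}\bar M(m(D^s\bar u_i))\,d\nu_n+\M_s(\bar u_i),
\]
and combining it with the lower semicontinuity $\M_s(\bar u_i)\le\liminf_k\M_s(u_k)=1$, the weak-$*$ lower semicontinuity of $w\mapsto\iint_{\R^{2n}}\bar M(w)\,d\nu_n$ along $m(D^su_k)\to\phi$, and the identification $\phi=m(D^s\bar u_i)$ in $(W^s_0L_M(\Omega))^*$ delivered by Theorem \ref{pseudo}, one is forced to conclude $\M_s(\bar u_i)=1$. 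Finally $\bar\lambda_i\ge c>0$ and $\G(\bar u_i)=c'_i=\lim_kc_{i,k}$ have already been shown, completing the proof. As indicated, the only genuinely hard step is the uniform upper bound on $\lambda_k$.
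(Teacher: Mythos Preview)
Your overall strategy --- weak-$*$ compactness, then pseudomonotonicity, then identification --- is the paper's own, and most of the steps you write are correct. But you explicitly leave open the one step that carries the whole argument, and your proposed route to $\M_s(\bar u_i)=1$ has a subtle flaw.

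\textbf{The upper bound on $\lambda_k$.} You say the bound ``has to be wrung out of the finite-dimensional min-max construction \dots\ concretely from the boundedness of the levels $c_{i,k}$ together with the growth condition \eqref{g}'', and then grant it. This is not how it goes: the levels $c_{i,k}$ and the growth of $g$ only control $\G(u_k)$ and $\int_\Omega g(u_k)u_k\,dx$, not $\iint m(D^su_k)D^su_k\,d\nu_n$, which without $\Delta_2$ is genuinely decoupled from $\M_s(u_k)=1$. The paper's argument (which \emph{is} the argument of \cite{Ti}) is a contradiction via monotonicity. Since $g(\bar u_i)\not\equiv 0$, one can pick $k_0$ and $\phi\in V_{k_0}$ with $\int_\Omega g(\bar u_i)(\bar u_i-\phi)\,dx<0$. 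For $k\ge k_0$ one has $\phi\in V_k$, so testing the finite-dimensional equation with $u_k-\phi$ and using the monotonicity of $m$ gives
\[
\iint_{\R^{2n}} m(D^s\phi)(D^su_k-D^s\phi)\,d\nu_n\ \le\ \iint_{\R^{2n}} m(D^su_k)(D^su_k-D^s\phi)\,d\nu_n\ =\ \lambda_k\int_\Omega g(u_k)(u_k-\phi)\,dx.
\]
The left side converges (since $m(D^s\phi)\in E_{\bar M}(\nu_n)$ and $D^su_k\to D^s\bar u_i$ weak-$*$), while $\int_\Omega g(u_k)(u_k-\phi)\,dx\to\int_\Omega g(\bar u_i)(\bar u_i-\phi)\,dx<0$; hence $\lambda_k\to\infty$ would force the right side to $-\infty$, a contradiction. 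This monotonicity trick is the missing idea; nothing about the min-max values replaces it.

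\textbf{The step $\M_s(\bar u_i)=1$.} Your splitting via $m(t)t=\bar M(m(t))+M(t)$ is fine, but the inequality $\iint\bar M(m(D^s\bar u_i))\,d\nu_n\le\liminf_k\iint\bar M(m(D^su_k))\,d\nu_n$ does \emph{not} follow from weak-$*$ lower semicontinuity along $m(D^su_k)\to\phi$ together with ``$\phi=m(D^s\bar u_i)$'': Theorem \ref{pseudo} (through Lemma \ref{parapseudo}) only gives that identity in $(W^s_0L_M(\Omega))^*$, i.e., after pairing with $D^sv$, not pointwise, so there is no reason $\iint\bar M(\phi)\,d\nu_n=\iint\bar M(m(D^s\bar u_i))\,d\nu_n$. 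The paper instead uses the compact embedding (Theorem \ref{teo.compact}) to pass to a subsequence with $u_k\to\bar u_i$ a.e., hence $D^su_k\to D^s\bar u_i$ $\nu_n$-a.e.; then the convergence of $\iint m(D^su_k)D^su_k\,d\nu_n$ coming from pseudomonotonicity together with Lemma \ref{conv2} yields $m(D^su_k)D^su_k\to m(D^s\bar u_i)D^s\bar u_i$ in $L^1(\R^{2n},\nu_n)$, from which one extracts an integrable majorant $h\ge M(D^su_k)$ and concludes $\M_s(\bar u_i)=1$ by dominated convergence. Your argument can be repaired along the same lines: once $D^su_k\to D^s\bar u_i$ $\nu_n$-a.e., Fatou applied to $\bar M(m(D^su_k))$ and to $M(D^su_k)$ supplies both needed liminf inequalities directly, without any pointwise identification of $\phi$.
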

\begin{proof}
Using that 
$$
\iint_{\R^{2n}}M(D^su_k)\,d\nu_n=1,
$$	
for all $k\geq i$ we have that the sequence $\{u_k\}_k$ is bounded in $W_0^sL_M(\Omega)$ therefore there exists $\bar{u}_i\in W_0^sL_M(\Omega)$ such that $u_k\to \bar{u}_i$ in $\sigma(W_0^sL_M(\Omega), W^{-s}E_{\bar{M}}(\Omega))$ for a subsequence then $\mathcal{G}(u_k)\to \mathcal{G}(\bar{u}_i)$ and so by Lemma \ref{LSDdimfin}  
$$
\mathcal{G}(\bar{u}_i)=\lim_{k\to\infty}\mathcal{G}(u_k)=\lim_{k\to\infty}c_{i,k},
$$
since $\{c_{i,k}\}_k$ are increasing and positive we have $\mathcal{G}(\bar{u}_i)>0$ implying $g(\bar{u}_i)\not\equiv 0$ (and therefore $\bar{u}_i\not\equiv0$). Then there exists $k_0>i$ and a function $\phi\in V_{k_0}$ such that
$$
\int_\Omega g(\bar{u}_i)(\bar{u}_i-\phi)\,dx<0.
$$
Now assume that $\lambda_k\to\infty$, when $k\to\infty$. Using the monotonicity of the operator $(-\Delta_m)^s$ for $k\geq k_0$
$$
\iint_{\R^{2n}}(m(D^su_k)-m(D^s\phi))(D^su_k-D^s\phi)\,d\nu_n\geq 0,
$$
the above inequality and Lemma \ref{LSDdimfin} produces
\begin{align*}
\iint_{\R^{2n}}m(D^s\phi)(D^su_k-D^s\phi)\,d\nu_n &\leq \iint_{\R^{2n}}m(D^su_k)(D^su_k-D^s\phi)\,d\nu_n
\\
&=\lambda_k\int_\Omega g(u_k)(u_k-\phi)\,dx,
\end{align*}
since 
$$
\int_\Omega g(u_k)(u_k-\phi)\,dx\to \int_\Omega g(\bar{u}_i)(\bar{u}_i-\phi)\,dx<0
$$
and 
$$
\iint_{\R^{2n}}m(D^s\phi)(D^su_k-D^s\phi)\,d\nu_n\to \iint_{\R^{2n}}m(D^s\phi)(D^s\bar{u}_i-D^s\phi)\,d\nu_n,
$$
we have a contradiction and so the sequence $\{\lambda_k\}$ is bounded. Therefore we may assume for a common subsequence that $\lambda_k\to \bar{\lambda}_i$ in $\R$ and $u_k\to \bar{u}_i$ for $\sigma(W_0^sL_M(\Omega), W^{-s}E_{\bar{M}}(\Omega))$.

Recall that Young's inequality implies that $\bar{M}(m(t))=m(t)t-M(t)\leq m(t)t$ forall $t\in \R$. Therefore
\begin{align*}
\iint_{\R^{2n}}\bar{M}(m(D^su_k))\,d\nu_n&\leq \iint_{\R^{2n}}	m(D^su_k)D^su_k\,\nu_n
\\
&=\lambda_k \int_\Omega g(u_k)u_k\,dx
\\
&\to \bar{\lambda}_i \int_\Omega g(\bar{u}_i)\bar{u}_i\,dx<\infty,
\end{align*}
where we used Lemma \ref{LSDdimfin}. Then the sequence $\{m(D^su_k)\}$ is bounded in $L_{\bar{M}}(\nu_n)$ and there exists $F\in L_{\bar{M}}(\nu_n)$ such that $m(D^s u_k)\to F$ in $\sigma(L_{\bar{M}}(\nu_n),E_{M}(\nu_n))$ therefore $(-\Delta_m)^su_k\to f=
\diver ^s F\in W^{-s}L_{\bar{M}}(\Omega)$ in $\sigma(W^{-s}L_{\bar{M}}(\Omega),W_0^sE_M(\Omega))$. For any $\phi\in V$ we have 
\begin{align*}
\langle f, \phi \rangle&=\lim_{k\to \infty}\langle (-\Delta_m)^su_k,\phi \rangle
\\
&=\lim_{k\to\infty}\iint_{\R^{2n}}m(D^su_k)D^s\phi\,d\nu_n
\\
&=\lim_{k\to\infty}\lambda_k\int_\Omega g(u_k)\phi\,dx
\\&=\bar{\lambda}_i\int_\Omega g(\bar{u}_i)\phi\,dx.
\end{align*}
Since $V$ is norm dense in  $W_0^sE_M(\Omega)$ and $W_0^s E_M(\Omega)$ is dense in $W_0^sL_M(\Omega)$ in $\sigma(L_M\times L_M(\nu_n),L_{\bar{M}}\times L_{\bar{M}}(\nu_n))$ it follows that 
\begin{equation}\label{eigen1}
\langle f, \phi \rangle=\bar{\lambda}_i\int_\Omega g(\bar{u}_i)\phi\,dx\qquad \forall \phi \in W_0^sL_M(\Omega).
\end{equation}
Also we have
\begin{align*}
\lim_{k\to\infty}\langle (-\Delta_m)^su_k,u_k \rangle&=\lim_{k\to\infty}\iint_{\R^{2n}}m(D^su_k)D^su_k\,d\nu_n
\\
&=\lim_{k\to\infty}\lambda_k\int_\Omega g(u_k)u_k\,dx
\\
&=\bar{\lambda}_i\int_\Omega g(\bar{u}_i)\bar{u}_i\,dx
\\
&=\langle f,\bar{u}_i \rangle.
\end{align*}
Now we are in position to apply the {\em pseudomonotonicity} property of  $(-\Delta_m)^s$ (Theorem \ref{pseudo}) and conclude that $\bar{u}_i\in \Dom((-\Delta_m)^s), (-\Delta_m)^s\bar{u}_i=f$ and 
\begin{equation}\label{paraL1}
\iint_{\R^{2n}}m(D^su_k)D^su_k\,d\nu_n\to \iint_{\R^{2n}}m(D^s\bar{u}_i)D^s\bar{u}_i\,d\nu_n.
\end{equation}
Therefore by \eqref{eigen1}
$$
\iint_{\R^{2n}}m(D^s\bar{u}_i)D^s \phi \,d\nu_n=\bar{\lambda}_i\int_\Omega g(\bar{u}_i)\phi\,dx\qquad \forall \phi\in W_0^s L_M(\Omega),
$$
that is $(\bar{\lambda}_i,\bar{u}_i)$ is an eigenpair for the problem \eqref{intro.2}. Observe that the above implies $\bar{\lambda}_i>0$.

On the other hand using Lemma \ref{conv2} and \eqref{paraL1} we have that $m(D^s u_k)D^s u_k\to m(D^s\bar{u}_i)D^s \bar{u}_i$ in $L^1(\R^{2n},\nu_n)$ and so there exists a majorant integrable $h\in L^1(\R^{2n},\nu_n)$ such that  $m(D^s u_k)|D^s u_k|\leq h$  $\nu_n-$a.e. in $\R^{2n}$ (\cite[Theorem 4.9]{brezis}), then by Young's inequality we have
\begin{align*}
M(D^s u_k) &\leq \bar{M}(m(D^su_k))+M(D^s u_k)
\\
&=m(D^s u_k)|D^s u_k|
\\
&\leq h,
\end{align*}
$\nu_n-$a.e. in $\R^{2n}$. By the compact immersion $W^s_0L_M(\Omega) \subset\subset E_M(\Omega)$ we can assume for a subsequence that $ u_k\to  \bar{u}_i$ a.e. in $\Omega$, and extending $\bar{u}_i=0$ in $\Omega^c$ we have $D^s u_k\to D^s \bar{u}_i$ $\nu_n-$a.e. in $\R^{2n}$, and consequently $M(D^s u_k)\to M(D^s \bar{u}_i)$ $\nu_n-$a.e. in $\R^{2n}$. Therefore by the dominated convergence theorem
$$
\mathcal{M}_s(\bar{u}_i)=\lim_{k\to\infty}\iint_{\R^{2n}}M(D^s u_k)\,d\nu_n=1.
$$
This finishes the proof of the theorem.
\end{proof}
In order to complete the proof of the main theorem we need to analize the asymptotic behavior of the eigenvalues $\bar{\lambda}_i$ as $i\to\infty$. In order to complet this fact we first study the behavior of the constants $c_{i,k}$ and $c_i$. This is the content of the next two lemmas. The proofs of this lemmas follow the same ideas of the lemmas 4.3 and 4.4 in \cite{Ti}. 
\begin{lem}\label{ciktoci}
Let $i\in\N$ be fixed. Then $c_{i,k}\to c_i$ as $k\to\infty$.
\end{lem}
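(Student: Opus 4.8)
I would follow the scheme of \cite[Lemma 4.3]{Ti}, transported to the fractional setting, using the sequence of projectors $\{P_k\}$ provided by Theorem~\ref{teo.abstracto} applied to the complementary system \eqref{CS} and to the norm-dense subspace $V$ fixed above; here $E=W^s_0L_M(\Omega)$, $E_0=W^s_0E_M(\Omega)$, $F_0=W^{-s}E_{\bar M}(\Omega)$, and we take $\|\cdot\|_E=\|D^s\cdot\|_{M,\nu_n}$ (an equivalent norm by Poincar\'e). After relabelling we may assume that $P_k$ takes values in an increasing family $V_{n_k}\subset V$ of finite-dimensional subspaces with $n_k\to\infty$ (that $n_k\to\infty$ follows from $P_k(u)\to u$ for $\sigma(E,F_0)$ and the infinite-dimensionality of $E_0$). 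One inequality is immediate: $V_k\subset W^s_0E_M(\Omega)$ gives $\mathcal N_k^s\subset\mathcal N^s$, hence $\mathcal K^s_{i,k}\subset\mathcal K^s_i$ and $c_{i,k}\le c_i$ for all $k$; since $\{c_{i,k}\}_k$ is nondecreasing, $\lim_k c_{i,k}$ exists and is $\le c_i$.

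For the reverse inequality I would fix $\varepsilon>0$, choose $K\in\mathcal K^s_i$ with $\inf_{u\in K}\mathcal G(u)>c_i-\varepsilon$, and transport $K$ into the finite-dimensional manifolds $\mathcal N^s_{n_k}$ by means of $P_k$ followed by a rescaling. The first point to check is that the conclusions of Theorem~\ref{teo.abstracto}, stated for strongly convergent sequences in $E_0$, can be upgraded by a standard subsequence--contradiction argument to \emph{uniform} statements over the compact set $K$; in particular $\sup_{u\in K}\bigl|\,\|D^sP_k(u)\|_{M,\nu_n}-\|D^su\|_{M,\nu_n}\,\bigr|\to0$. Since every $u\in\mathcal N^s$ has $\|D^su\|_{M,\nu_n}=1$ (the standard equivalence $\iint M(D^su)\,d\nu_n=1\iff\|D^su\|_{M,\nu_n}=1$ for Young functions), it follows that for all large $k$ one has $P_k(u)\neq0$ for every $u\in K$, with $\|D^sP_k(u)\|_{M,\nu_n}$ bounded away from $0$ on $K$. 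Hence
\[
\pi_k(u):=\frac{P_k(u)}{\|D^sP_k(u)\|_{M,\nu_n}}
\]
defines an odd, norm-continuous map $\pi_k\colon K\to\mathcal N^s_{n_k}$ (using the same equivalence in the converse direction for the smooth functions $\pi_k(u)\in V_{n_k}$). Then $\pi_k(K)$ is compact, symmetric and avoids $0$, and since the genus is monotone under odd continuous maps, $\mathrm{gen}(\pi_k(K))\ge\mathrm{gen}(K)\ge i$; thus $\pi_k(K)\in\mathcal K^s_{i,n_k}$ and therefore $c_{i,n_k}\ge\inf_{u\in K}\mathcal G(\pi_k(u))$.

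It then remains to prove $\liminf_k\inf_{u\in K}\mathcal G(\pi_k(u))\ge\inf_{u\in K}\mathcal G(u)$. I would argue by contradiction: if this failed, using that $K$ is compact and $\mathcal G\circ\pi_k$ is continuous on the finite-dimensional $\mathcal N^s_{n_k}$, I could extract a subsequence and points $u_k\in K$ with $\mathcal G(\pi_k(u_k))\le\inf_K\mathcal G-\delta$, and then $u_k\to u\in K$ strongly in $E_0$ along a further subsequence. By Theorem~\ref{teo.abstracto}, $\|D^sP_k(u_k)\|_{M,\nu_n}\to\|D^su\|_{M,\nu_n}=1$ while $P_k(u_k)\to u$ for $\sigma(W^s_0L_M(\Omega),W^{-s}E_{\bar M}(\Omega))$ and $\{P_k(u_k)\}_k$ is bounded in $W^s_0L_M(\Omega)$; dividing by the scalars tending to $1$ gives $\pi_k(u_k)\to u$ for the same topology, still with a bounded sequence and with $\pi_k(u_k)\in V$. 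The weak-$*$ sequential continuity of $\mathcal G$ along such sequences --- established in Section~\ref{section.4} via \eqref{g} and the compact embedding $W^s_0L_M(\Omega)\subset\subset E_M(\Omega)$ (Theorem~\ref{teo.compact}) --- then yields $\mathcal G(\pi_k(u_k))\to\mathcal G(u)\ge\inf_K\mathcal G$, contradicting the choice of $u_k$. Consequently $\liminf_k c_{i,n_k}\ge\inf_K\mathcal G>c_i-\varepsilon$, and because $\{c_{i,k}\}_k$ is nondecreasing with $n_k\to\infty$ this forces $\lim_k c_{i,k}\ge c_i-\varepsilon$; letting $\varepsilon\to0$ completes the proof (the case $c_i=\infty$ being handled identically, choosing $K$ with $\inf_K\mathcal G$ arbitrarily large).

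The crux --- and where non-reflexivity makes itself felt --- is the rescaling step: one must guarantee that the normalized projections $\pi_k(u_k)$ still converge (for $\sigma(E,F_0)$) to $u$ itself rather than to a dilate of it. This is precisely why the condition $u\in\mathcal N^s$, i.e. $\|D^su\|_{M,\nu_n}=1$, is combined with the norm-convergence clause $\|P_k(u_k)\|_E\to\|u\|_E$ of Theorem~\ref{teo.abstracto}; the remaining work --- passing from sequential conclusions of that theorem to uniform estimates on the compact $K$, and checking that $\pi_k$ lands in $\mathcal N^s_{n_k}$ --- is routine but indispensable.
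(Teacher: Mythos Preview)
Your proposal is correct and follows essentially the same route as the paper's proof: both use the projectors $P_k$ of Theorem~\ref{teo.abstracto} to push a near-optimal compact $K\subset\mathcal N^s$ into the finite-dimensional levels $\mathcal N^s_{m_k}$ via an odd rescaled map, and then invoke compactness of $K$ together with the weak-$*$ sequential continuity of $\mathcal G$ to derive a contradiction. Your version is slightly more explicit on two points (normalizing by $\|D^s\cdot\|_{M,\nu_n}$ so that the rescaled image actually lies on $\mathcal N^s_{m_k}$, and arguing that $n_k\to\infty$ so that monotonicity of $c_{i,k}$ transfers the bound), but the underlying argument is the same.
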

\begin{proof}
Using the definition of the constants $c_{i,k}$ we have $c_{i,k}\leq c_{i,k+1}\leq \cdots \leq c_i$ for all $k\in \N$. We argue by contradiction, suppose that there exists $\varepsilon>0$ such that $c_{i,k}<c_i-\varepsilon$ for all  $k\in \N$. By the definition of $c_i$ and the supremum property there exists $K_\varepsilon\in \mathcal{K}_{i,k}^s$ such that
\begin{equation}\label{contra1}
c_i-\varepsilon/2< \inf_{w\in K_\varepsilon}\mathcal{G}(w).	
\end{equation}
We consider now the mappings $P_k\colon W_0^s E_M(\Omega)\to W_0^s E_M(\Omega) $ given by Theorem \ref{teo.abstracto}. It is easy to check that $0\notin P_k(K_\varepsilon)$ for every $k$ large enough. Indeed, suppose $P_{k_j}(w_{k_j})=0$ with $w_{k_j}\in  K_\varepsilon$ and $k_j\to \infty$. By compactness of the set $K_\varepsilon$, $w_{k_j}\to w\in K_\varepsilon$ for a subsequence. Theorem \ref{teo.abstracto} implies $w=0$ wich contradicts $0\notin K_\varepsilon$. Therefore the map $\Psi_k\colon K_\varepsilon \to \mathcal{N}^s\cap V_{m_k}$ defined as
$$
\Psi_k(w)=\frac{P_k(w)}{\|P_k(w)\|_{s,M}}
$$ 
is odd and continuous for $k$ large enough. Hence $\Psi_k(K_\varepsilon)\subset \mathcal{K}_{i,m_k}^s$ implying
$$
\inf_{w\in \Psi_k(K_\varepsilon)}\mathcal{G}(w)\leq c_{i,m_k},
$$
for every $k$ large enough. Thus, for every $k$ large enough there exists $w_k\in K_\varepsilon$ such that 
\begin{equation}\label{contra2}
\mathcal{G}(\Psi_k(w_k))<c_i-\varepsilon.
\end{equation}
by the compactness we have $w_k\to w\in K_\varepsilon$ in $W_0^sE_M (\Omega)$ for a subsequence implying, again by Theorem \ref{teo.abstracto}, that $\|P_k(w_k)\|_{s,M}\to \|w\|_{s,M}=1$ therefore 
$$
\Psi_k(w_k)\to w\qquad  \text{for }\sigma(W_0^s L_M(\Omega), W^{-s}E_{\bar{M}}(\Omega)), 
$$
and so $\mathcal{G}(\Psi_k(w_k))\to \mathcal{G}(w)$, which contradicts \eqref{contra1} and \eqref{contra2}.
\end{proof}
\begin{lem}\label{cito0}
$c_i\to 0$ as $i\to \infty$.
\end{lem}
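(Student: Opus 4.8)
The plan is to show $c_i\to0$ by exploiting the compact immersion $W_0^sL_M(\Omega)\subset\subset E_M(\Omega)$ (see Theorem \ref{teo.compact}) together with the fact that $\G$ is sequentially continuous for the weak-$*$ topology and vanishes exactly at the origin: this forces the super-level sets $\{u\in\mathcal{N}^s\colon\G(u)\ge\delta\}$, $\delta>0$, to stay away from $0$ in $E_M(\Omega)$, hence to have finite Krasnoselskii genus, which is incompatible with the definition of $c_i$ via sets of genus at least $i$ unless $c_i\to0$. First I would record the two elementary reductions. Since $gen(K)\ge i+1$ implies $gen(K)\ge i$ we have $\mathcal{K}_{i+1}^s\subseteq\mathcal{K}_i^s$, so $\{c_i\}_i$ is non-increasing; and since $G(t)=\int_0^{|t|}g(\tau)\,d\tau\ge0$ we have $\G\ge0$ on $W_0^sE_M(\Omega)$, so $c_i\ge0$. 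Hence $c_i$ converges to some $c_\infty\ge0$, and I would argue by contradiction: if $c_\infty=2\delta>0$, then $c_i\ge2\delta$ for every $i$, so the supremum defining $c_i$ yields $K_i\in\mathcal{K}_i^s$ with $\G\ge\delta$ on $K_i$.

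Next I would analyse $A_\delta:=\{u\in\mathcal{N}^s\colon\G(u)\ge\delta\}$, which is symmetric because $M$ and $G$ are even and which contains every $K_i$. On $\mathcal{N}^s$ one has $\M_s(u)=1$, hence $\|D^su\|_{M,\nu_n}\le1$, so $\mathcal{N}^s$ is bounded in $W_0^sL_M(\Omega)$ and, by the compact immersion, relatively compact in $E_M(\Omega)$; therefore the closure $\overline{A_\delta}$ of $A_\delta$ in $E_M(\Omega)$ is compact and symmetric. The crucial step is to check that $0\notin\overline{A_\delta}$. If $u_j\in A_\delta$ and $u_j\to0$ in $E_M(\Omega)$, then $\{u_j\}$ is bounded in $W_0^sL_M(\Omega)$ and, again by the compact immersion, its only possible $\sigma(W_0^sL_M(\Omega),W^{-s}E_{\bar M}(\Omega))$ cluster point is $0$; the continuity of $\G$ for this topology --- a consequence of the growth condition \eqref{g} and the compact immersion, exactly as used for the convergences preceding \eqref{eigenfinito} --- would then give $\delta\le\G(u_j)\to\G(0)=0$, a contradiction. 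Thus $\overline{A_\delta}$ is a compact symmetric subset of $E_M(\Omega)\setminus\{0\}$, so by the standard property of the genus $N:=gen(\overline{A_\delta})<\infty$.

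Finally I would compare the genera computed in the two topologies. Each $K_i$ is compact in the norm of $W_0^sE_M(\Omega)$, which is finer than the topology induced by $E_M(\Omega)$; hence any odd map $\overline{A_\delta}\to\R^N\setminus\{0\}$ that is continuous for the $E_M(\Omega)$-topology restricts, along $K_i\subseteq A_\delta\subseteq\overline{A_\delta}$, to an odd map on $K_i$ that is continuous for the finer topology, so $gen(K_i)\le N$ for every $i$; since $gen(K_i)\ge i$, this is impossible once $i>N$, giving the desired contradiction and hence $c_i\to0$. The step I expect to be the main obstacle is the verification that $0\notin\overline{A_\delta}$, since it is exactly there that the interplay between the compact immersion $W_0^sL_M(\Omega)\subset\subset E_M(\Omega)$ and the weak-$*$ sequential continuity of $\G$ is essential; and --- because the $\Delta_2$ condition is not assumed --- one must also be careful that the genus in $\mathcal{K}_i^s$ is computed in the finer $W_0^sE_M(\Omega)$ topology while the compactness needed to bound it is only available in the coarser $E_M(\Omega)$ topology, the gap being bridged by the monotonicity of the genus under odd continuous maps. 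All remaining ingredients are routine genus theory.
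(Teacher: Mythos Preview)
Your argument is correct but follows a genuinely different route from the paper. The paper's proof relies on the projector operators $P_k$ of Theorem \ref{teo.abstracto}: given $\varepsilon>0$, it first fixes a single index $k_0$ so that $|\G(P_{k_0}(w))-\G(w)|<\varepsilon/2$ on $\mathcal{N}^s$ and $\G(w)<\varepsilon/2$ whenever $\|w\|_{M,s}\le\delta$; then for any compact symmetric $K\subset\mathcal{N}^s$ with $\inf_K\G>\varepsilon$ one gets $\|P_{k_0}(w)\|_{M,s}\ge\delta$ on $K$, so $P_{k_0}$ provides an odd continuous map from $K$ into a fixed finite-dimensional space minus the origin, forcing $gen(K)\le m_{k_0}$ and hence $c_i\le\varepsilon$ for $i>m_{k_0}$. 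Your proof bypasses Theorem \ref{teo.abstracto} entirely: you use the compact immersion of Theorem \ref{teo.compact} to show that the closure $\overline{A_\delta}$ of the super-level set in $E_M(\Omega)$ is compact, symmetric and avoids $0$, so standard genus theory gives a finite bound $N$, and the fact that the $W_0^sE_M(\Omega)$-topology is finer than the $E_M(\Omega)$-topology lets you transfer this bound to every $K_i$. The paper's approach fits the Galerkin framework used throughout and reuses the projector machinery already needed in Lemma \ref{ciktoci}; your approach is more self-contained and arguably more elementary, needing only the compact embedding and the basic fact that compact symmetric sets away from the origin have finite genus. A minor remark: in showing $0\notin\overline{A_\delta}$ you take a detour through weak-$*$ cluster points, but you could argue directly that $\|u_j\|_{M,\Omega}\to0$ together with the growth bound \eqref{g} already gives $\G(u_j)\to0$; either way the conclusion stands.
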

\begin{proof}
Let $\varepsilon>0$ be arbitraty and $\{P_k\}_{k\in \N}$ the mappings given by Theorem \ref{teo.abstracto}. The continuity properties of the mappings $P_k$ and $\mathcal{G}$ imply the existence of $k_0\in \N$ and $\delta>0$ such that 
$$
|\mathcal{G}(P_{k_0}(w))-\mathcal{G}(w)|<\varepsilon/2\qquad \text{ for all }w\in  \mathcal{N}^s
$$
and
$$
\mathcal{G}(w)<\varepsilon/2\qquad \text{ for all }\| w\|_{M,s}\leq \delta.
$$ 
Therefore if $K\subset \mathcal{N}^s$ is compact and symmetric with $\inf_{w\in K}\mathcal{G}(w)>\varepsilon$ then
$$
\|P_{k_0}(w)\|_{M,s}\geq \delta\qquad \text{ for all }w\in K.
$$
Hence $gen(K)\leq gen (P_{k_0}(w))\leq m_{k_0}$ for some $m_{k_0}\in \N$. Thus, if $i>m_{k_0}$ and $K\in \mathcal{K}_i^s$ then 
$$
\inf_{w\in K}\mathcal{G}(w)\leq \varepsilon,
$$
and recalling that 
$$
c_i=\sup_{K\in  \mathcal{K}_i^s}\inf_{u\in K}\mathcal{G}(u)
$$
we have $c_i\leq \varepsilon$ for $i>m_{k_0}$.
\end{proof}
Now we are ready to complete the proof of the main result, namely Theorem \ref{teo.main}.
\begin{thm}
With the previous assumptions and notations,
$\bar{\lambda}_i\to +\infty$ and  $\bar{u}_i\to 0$ for $\sigma(W_0^sL_M(\Omega), W^{-s}E_{\bar{M}}(\Omega))$ as $i\to\infty$.
\end{thm}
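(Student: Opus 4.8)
The plan is to derive both assertions from three properties of the sequence $\{(\bar\lambda_i,\bar u_i)\}_i$ furnished by Theorem \ref{teouilambdai}: (i) $\mathcal{M}_s(\bar u_i)=1$; (ii) $\mathcal{G}(\bar u_i)=\lim_{k\to\infty}c_{i,k}=c_i$, by Lemma \ref{ciktoci}; and (iii) $(\bar\lambda_i,\bar u_i)$ is an eigenpair for \eqref{intro.2}, so that, testing the identity $\iint_{\R^{2n}}m(D^s\bar u_i)D^s\phi\,d\nu_n=\bar\lambda_i\int_\Omega g(\bar u_i)\phi\,dx$ (valid for all $\phi\in W^s_0L_M(\Omega)$, as established in the proof of Theorem \ref{teouilambdai}) with $\phi=\bar u_i$, one gets $\iint_{\R^{2n}}m(D^s\bar u_i)D^s\bar u_i\,d\nu_n=\bar\lambda_i\int_\Omega g(\bar u_i)\bar u_i\,dx$, both sides being finite since $\bar u_i\in\Dom((-\Delta_m)^s)$. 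From (i) and Poincar\'e's inequality, $\{\bar u_i\}$ is bounded in $W^s_0L_M(\Omega)$.

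The first, and main, step is to show that the full sequence $\bar u_i$ converges to $0$ both for $\sigma(W^s_0L_M(\Omega),W^{-s}E_{\bar M}(\Omega))$ and strongly in $E_M(\Omega)$. Pick an arbitrary subsequence. Since $W^{-s}E_{\bar M}(\Omega)$ is separable, bounded sets of its dual $W^s_0L_M(\Omega)$ are weak-$*$ sequentially compact, so a further subsequence $\sigma$-converges to some $w\in W^s_0L_M(\Omega)$; by the compact immersion $W^s_0L_M(\Omega)\subset\subset E_M(\Omega)$ (Theorem \ref{teo.compact}) a further subsequence converges strongly in $E_M(\Omega)$, necessarily to the same $w$, hence, along yet another subsequence, a.e. in $\Omega$. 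Because $g(t)t>0$ gives $G\ge 0$, Fatou's lemma together with Lemma \ref{cito0} yields
$$
\int_\Omega G(w)\,dx\le\liminf_{i\to\infty}\int_\Omega G(\bar u_i)\,dx=\lim_{i\to\infty}c_i=0,
$$
so $G(w)=0$ a.e. and therefore $w=0$. As every subsequence of $\{\bar u_i\}$ has a further subsequence converging to $0$ in both topologies, the whole sequence does.

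For $\bar\lambda_i\to+\infty$, Young's inequality in the form $m(t)t=M(t)+\bar M(m(t))\ge M(t)$ gives
$$
\bar\lambda_i\int_\Omega g(\bar u_i)\bar u_i\,dx=\iint_{\R^{2n}}m(D^s\bar u_i)D^s\bar u_i\,d\nu_n\ge\iint_{\R^{2n}}M(D^s\bar u_i)\,d\nu_n=\mathcal{M}_s(\bar u_i)=1;
$$
moreover $\mathcal{G}(\bar u_i)=c_i>0$ forces $\bar u_i\not\equiv 0$, and since $g(t)t>0$ we have $\int_\Omega g(\bar u_i)\bar u_i\,dx>0$, whence $\bar\lambda_i\ge\big(\int_\Omega g(\bar u_i)\bar u_i\,dx\big)^{-1}$. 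It thus remains to check $\int_\Omega g(\bar u_i)\bar u_i\,dx\to 0$. By \eqref{g}, the oddness of $g$ and $g(t)t>0$,
$$
0\le g(\bar u_i)\bar u_i\le a_1|\bar u_i|+\frac{a_2}{a_3}\,m(a_3|\bar u_i|)\,(a_3|\bar u_i|)\qquad\text{a.e. in }\Omega,
$$
and the Young equality $m(t)t=M(t)+\bar M(m(t))$ combined with the elementary bound $\bar M(m(t))=t\,m(t)-M(t)\le M(2t)$ (valid because $t\,m(t)\le\int_t^{2t}m(\sigma)\,d\sigma=M(2t)-M(t)$) gives $m(a_3|\bar u_i|)(a_3|\bar u_i|)\le 2M(2a_3|\bar u_i|)$. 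Finally, $\bar u_i\to 0$ in $E_M(\Omega)$ implies, by convexity of $M$, $\int_\Omega M(\lambda\bar u_i)\,dx\to 0$ for every $\lambda>0$, and on the bounded domain $\Omega$ one has $L_M(\Omega)\hookrightarrow L^1(\Omega)$, so $\int_\Omega|\bar u_i|\,dx\to 0$ as well; hence $\int_\Omega g(\bar u_i)\bar u_i\,dx\to 0$ and $\bar\lambda_i\to+\infty$.

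I expect the crux to be the first step: because none of the spaces involved is reflexive, passing from ``$\{\bar u_i\}$ bounded in $W^s_0L_M(\Omega)$ with $\mathcal{G}(\bar u_i)\to 0$'' to convergence of the \emph{entire} sequence to $0$ requires weaving together the compact embedding, weak-$*$ sequential compactness of bounded sets, and the subsequence principle, rather than a direct extraction; everything after that is a routine modular computation.
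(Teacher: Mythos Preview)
Your proof is correct and follows essentially the same line as the paper's: boundedness of $\{\bar u_i\}$ from $\mathcal M_s(\bar u_i)=1$, identification of any weak-$*$ cluster point as $0$ via $\mathcal G(\bar u_i)=c_i\to 0$, and the lower bound $\bar\lambda_i\ge\big(\int_\Omega g(\bar u_i)\bar u_i\,dx\big)^{-1}$ from $m(t)t\ge M(t)$. The only difference is cosmetic: where the paper invokes the previously stated continuity of $u\mapsto\langle\mathcal G'(u),u\rangle$ along $\sigma$-convergent sequences to get $\int_\Omega g(\bar u_i)\bar u_i\,dx\to 0$, you supply an explicit modular estimate via \eqref{g} and $m(t)t\le 2M(2t)$; and where the paper writes ``we can assume $\bar u_i\to\tilde u$'', you spell out the subsequence principle.
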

\begin{proof}
Using Theorem \ref{teouilambdai}, Lemma \ref{ciktoci} and Lemma \ref{cito0} we obtain that $\mathcal{G}(\bar{u}_i)\to 0$ as $i\to\infty$. On the other by Theorem \ref{teouilambdai} we can assume that $\bar{u}_i\to \tilde{u}\in W_0^sL_M(\Omega)$ for $\sigma(W_0^sL_M(\Omega),W^{-s}E_{\bar{M}}(\Omega))$ as $i\to\infty$. From these facts we easily deduce that $\tilde{u}=0$.

Now taking $\bar{u}_i$ as test function in  \eqref{soldebil} we obtain
$$
\bar{\lambda}_i=\frac{\iint_{\R^{2n}}m(D^s \bar{u}_i)D^s \bar{u}_i\,d\nu_n}{\int_\Omega g(\bar{u}_i)\bar{u}_i\,dx}\geq \frac{1}{\int_\Omega g(\bar{u}_i)\bar{u}_i\,dx}.
$$
Finally by the compactness of the embedding $\bar{u}_i\to 0$ in $E_M$ as $i\to \infty$ from where it follows that $\bar{\lambda}_i\to \infty$ as $i\to\infty$.
\end{proof}
\appendix

\section{Density results}\label{ap.density}

In this Section, we prove some density results regarding the fractional order Orlicz-Sobolev spaces that are needed in this work. We mention that these results follow the same approach as the analog ones for the classical Orlicz-Sobolev spaces proved in \cite{Gossez}. So we only sketch the arguments, including some detail where the differences arise.

One of the key properties that we assume on the domain $\Omega$ is that it satisfies the so called {\em segment property}. 
\begin{defi}\label{segment}
Let $\Omega\subset\R^n$ be an open and bounded domain. We say that $\Omega$ has the {\em segment property} if there exists a locally finite open covering of $\partial \Omega$ with balls $\{B_t(x_j)\}$ centered in $x_j\in \partial \Omega$ with radius $t$, a corresponding sequence of units vectors $n_j$, and a number $t^*\in (0,1)$ such that
$$
x\in \bar{\Omega}\cap B_t(x_j)\Longrightarrow x+t n_j\in \Omega\text{ for all } t\in (0,t^*).
$$
\end{defi}
This condition about the domain $\Omega$ says, in some sense, that the domain lie locally on one side of its boundary. Observe that the segment property does not impply any smoothness on the boundary $\partial\Omega$. Conversely, if a domain is of class $C^1$ does not imply that the domain satisfy the segment property. See Section 8.1 in \cite{DB} for more details.

The main result in this section is the following.
\begin{thm}\label{teo.density}
Assume that $\Omega\subset\R^n$ has the segment property. Then ${\mathcal D}(\Omega)$ is dense in $W^s_0L_M(\Omega)$ with respect to the $\sigma(W^s_0L_M(\Omega), W^{-s}L_{\bar M}(\Omega))$ topology.
\end{thm}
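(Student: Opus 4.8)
The plan is to follow the three–step strategy of \cite[Theorem~1.3]{Gossez}, the new feature being that the ``gradient'' $D^su$ lives on the infinite–measure space $(\R^{2n},\nu_n)$, singular along the diagonal $\Delta$, rather than on a bounded domain. First I would reduce the statement to a \emph{modular} approximation. Since $W^{-s}L_{\bar M}(\Omega)=\{f+{\mathrm{div}}^sF:\ f\in L_{\bar M},\ F\in L_{\bar M}(\nu_n)\}$ with $\langle f+{\mathrm{div}}^sF,v\rangle=\int_\Omega fv\,dx+\iint_{\R^{2n}}F\,D^sv\,d\nu_n$, the target topology on $W^s_0L_M(\Omega)$ is exactly the restriction, via $u\mapsto(u,D^su)$, of $\sigma(L_M\times L_M(\nu_n),L_{\bar M}\times L_{\bar M}(\nu_n))$. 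Hence it is enough to produce, for each $u\in W^s_0L_M(\Omega)$, a sequence $\{\phi_k\}\subset\mathcal D(\Omega)$ converging to $u$ in the modular sense of $W^sL_M$ --- $\int_\Omega M(\lambda(\phi_k-u))\,dx\to0$ and $\iint_{\R^{2n}}M(\lambda(D^s\phi_k-D^su))\,d\nu_n\to0$ for some fixed $\lambda>0$, such a $\lambda$ existing because $u\in L_M$ and $D^su\in L_M(\nu_n)$ --- and also $\nu_n$–a.e. Modular plus a.e.\ convergence then forces these weak--$*$ convergences: for a fixed test function $v$ one splits into $\{|v|\le T\}$ and the tail $\{|v|>T\}$, the tail being absorbed by Young's inequality and the decay to zero as $T\to\infty$ of $\iint_{\{|v|>T\}}\bar M(v/\lambda')\,d\nu_n$ (for $\lambda'$ small), while the bounded part is handled using that each $D^s(\phi_k-u)$ is supported in $\{x\in\overline\Omega\}\cup\{y\in\overline\Omega\}$, a set whose slice $\{\rho\le|x-y|\le R\}$ has finite $\nu_n$–measure and whose portions near $\Delta$ and near infinity carry vanishingly small $\iint\bar M(v/\lambda')\,d\nu_n$ (dominated convergence, using $\nu_n(\Delta)=0$).

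The construction of $\{\phi_k\}$ is classical. Truncating, $T_ku:=\max(-k,\min(k,u))$ satisfies $|D^sT_ku|\le|D^su|$ pointwise and $T_ku\to u$ a.e., so by dominated convergence it suffices to approximate $u\in L^\infty$. Using the segment property I would cover $\partial\Omega$ by the balls $\{B_t(x_j)\}$ of Definition~\ref{segment} together with an open set $\Omega_0$ with $\overline{\Omega_0}\subset\Omega$ covering $\overline\Omega\setminus\bigcup_jB_t(x_j)$, choose a finite smooth partition of unity $\{\theta_j\}\cup\{\theta_0\}$ subordinate to it, and write $u=\sum_j\theta_ju+\theta_0u$. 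Multiplication by $\theta\in C^{0,1}_c(\R^n)$ preserves $W^sL_M$ --- from $D^s(\theta u)(x,y)=\theta(x)D^su(x,y)+u(y)D^s\theta(x,y)$, the bound $\|D^s\theta\|_{L^\infty(\nu_n)}<\infty$ (valid since $s<1$, with $D^s\theta$ vanishing on $\Delta$), and $u\in L^\infty$ --- and commutes well enough with truncation that each summand is still approximated modularly; the term $\theta_0u$ has compact support in $\Omega$.

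For a boundary piece $u_j:=\theta_ju$, supported in a compact subset of $\overline\Omega\cap B_t(x_j)$, I would translate along the admissible direction, $u_j^\tau(x):=u_j(x-\tau n_j)$; the segment property gives $\supp u_j^\tau=\supp u_j+\tau n_j\subset\Omega$ for $\tau\in(0,t^*)$. The structural point is that $\nu_n$ is invariant under all diagonal translations $(x,y)\mapsto(x+h,y+h)$, while $D^s(u_j^\tau)(x,y)=D^su_j(x-\tau n_j,y-\tau n_j)$ is precisely such a translate of $D^su_j$; together with translation invariance of $dx$, a bounded–part/tail–part decomposition then yields $u_j^\tau\to u_j$ modularly in $W^sL_M$ as $\tau\to0^+$ --- the bounded truncation of $D^su_j$ lies in $E_M(\nu_n)$ because the compact support of $u_j$ forces the decay $|D^su_j(x,y)|\le 2\|u_j\|_{L^\infty}|x-y|^{-s}$, so its translates converge in the Luxemburg norm, and the remaining tail is made modularly small via convexity of $M$ and $\iint M(\lambda_0D^su_j)\,d\nu_n<\infty$. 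Finally, mollifying, $\rho_\ve*u_j^\tau\in\mathcal D(\Omega)$ for $\ve$ small, and since $D^s(\rho_\ve*v)(x,y)=\int\rho_\ve(z)D^sv(x-z,y-z)\,dz$ is the mollification of $D^sv$ in the diagonal variable, Jensen's inequality and the diagonal invariance of $\nu_n$ give $\iint M(\lambda\,D^s(\rho_\ve*v))\,d\nu_n\le\iint M(\lambda D^sv)\,d\nu_n$, which together with a.e.\ convergence and convexity of $M$ yields $\rho_\ve*u_j^\tau\to u_j^\tau$ modularly; the term $\theta_0u$, already compactly supported in $\Omega$, is handled by mollification alone. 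A diagonal extraction over $k$, $\tau$, $\ve$ produces the sought sequence in $\mathcal D(\Omega)$.

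The hard part will be exactly the two technical points above. Because $D^su$ need not lie in $E_M(\nu_n)$ when $M$ lacks the $\Delta_2$–condition, one cannot quote norm–continuity of translation or of mollification directly, and because $\nu_n$ is infinite and singular along $\Delta$, both the implication ``modular convergence $\Rightarrow\sigma(L_M(\nu_n),L_{\bar M}(\nu_n))$ convergence'' and the modular continuity of the diagonal translations and mollifications must be established by hand, exploiting the decay of the Hölder quotient of compactly supported functions, the compatibility of $\nu_n$ with diagonal translations and dilations, the fact that $\nu_n(\Delta)=0$, and the decay of the modular of the fixed test function. This is the point at which the fractional argument genuinely departs from its local model in \cite{Gossez}; the rest is a routine transcription.
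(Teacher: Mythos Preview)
Your strategy coincides with the paper's: both follow \cite[Theorem~1.3]{Gossez}, the key new observation being that diagonal translations $(x,y)\mapsto(x+h,y+h)$ preserve $\nu_n$, so that $D^s(u_\tau)=(D^su)_\tau$ and $D^s(\rho_\ve*u)$ is the diagonal mollification of $D^su$. The paper's write-up is terser: it quotes Gossez's Lemma (a.e.\ convergence plus $M(u_k)\le w_k$ with $w_k\to w$ in $L^1$ implies $\sigma(L_M,L_{\bar M})$ convergence) as the engine, proves an analogue of the mollification lemma for compactly supported $F\in L_M(\nu_n)$ (Lemma~\ref{lem1.6bis}), and otherwise points to Gossez. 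Your route through explicit modular approximation is morally the same.

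One step, however, is not correct as written: the claim that the bounded truncation $(D^su_j)^T$ lies in $E_M(\nu_n)$ fails. The support of $D^su_j$ is contained in $\{x\in\supp u_j\}\cup\{y\in\supp u_j\}$, and this set has \emph{infinite} $\nu_n$--measure near the diagonal (for fixed $x$, $\int_{|x-y|<1}|x-y|^{-n}\,dy=\infty$), so even a bounded function supported there need not belong to any Orlicz class; the pointwise decay $|D^su_j|\le C|x-y|^{-s}$ does not help here. The fix is either to truncate in space as well --- restrict to $\{\rho<|x-y|<R\}$, which does have finite measure and carries a bounded function into $E_M(\nu_n)$, while the complement is absorbed into the modular tail via $\iint M(\lambda_0 D^su_j)\,d\nu_n<\infty$ --- or, as the paper does, to bypass modular convergence entirely: since $M\bigl((D^su_j)_\tau\bigr)=\bigl(M(D^su_j)\bigr)_\tau\to M(D^su_j)$ in $L^1(\nu_n)$ by ordinary $L^1$ continuity of translation for the diagonal-invariant measure $\nu_n$, Gossez's lemma yields the $\sigma$-convergence directly, with no need to land in $E_M(\nu_n)$.
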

In the rest of the paper we allways assume that the domain $\Omega$ satisfies the segment property.

For the proof of Theorem \ref{teo.density} we use a series of lemmas from \cite{Gossez}. 
\begin{lem}[\cite{Gossez}, Lemma 1.4]\label{lem1.4}
Let $u_k\in \LL_M(\R^N,d\mu)$ be such that $u_k\to u$ $\mu-$a.e. in $\R^N$ and $M(u_k)\le w_k$ $\mu-$a.e. in $\R^N$ where $w_k\to w$ in $L^1(\R^N,d\mu)$. Then $u\in \LL_M(\R^N,d\mu)$ and $u_k\to u$ in the $\sigma(L_M(\R^N,d\mu), L_{\bar M}(\R^N,d\mu))$ topology.
\end{lem}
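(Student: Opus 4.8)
The statement to prove is Lemma~\ref{lem1.4}: if $u_k\in\LL_M(\R^N,d\mu)$, $u_k\to u$ $\mu$-a.e., and $M(u_k)\le w_k$ $\mu$-a.e. with $w_k\to w$ in $L^1(\R^N,d\mu)$, then $u\in\LL_M(\R^N,d\mu)$ and $u_k\to u$ in $\sigma(L_M,L_{\bar M})$.

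\medskip

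The plan is to split the argument into two parts: first showing $u\in\LL_M$, and then upgrading a.e.\ convergence to weak-$*$ convergence against $E_{\bar M}$-type test functions. For the first part, I would pass to the limit in the pointwise inequality $M(u_k)\le w_k$ using that $M$ is continuous (so $M(u_k)\to M(u)$ $\mu$-a.e.) and that $w_k\to w$ in $L^1$; after extracting a subsequence, $w_k\to w$ $\mu$-a.e.\ as well, and I can also invoke a dominating function $h\in L^1$ with $w_k\le h$ a.e.\ (the standard $L^1$-convergence majorant, cf.\ \cite[Theorem 4.9]{brezis}). Then $M(u)\le w$ $\mu$-a.e., and since $w\in L^1(\R^N,d\mu)$ it follows that $\int_{\R^N}M(u)\,d\mu<\infty$, i.e.\ $u\in\LL_M(\R^N,d\mu)$. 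By the same dominating-function argument, $M(u_k)\le h$ with $h\in L^1$, so Vitali's (or dominated) convergence gives $\int_{\R^N}M(u_k)\,d\mu\to\int_{\R^N}M(u)\,d\mu$, which will be useful below.

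\medskip

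For the second part — the $\sigma(L_M,L_{\bar M})$ convergence — I would test against an arbitrary $v\in L_{\bar M}(\R^N,d\mu)$ and show $\int_{\R^N}(u_k-u)v\,d\mu\to 0$. The key point is equi-integrability of the family $\{u_k v\}$: by the Young inequality \eqref{young}, $|u_k v|\le M(u_k)+\bar M(v)\le h+\bar M(v)$ $\mu$-a.e., and the right-hand side is a fixed $L^1$ function (note $\bar M(v)\in L^1$ since $v\in\LL_{\bar M}$ when $v$ ranges over the Orlicz \emph{class}; for general $v\in L_{\bar M}$ one writes $v$ in terms of an element of $\LL_{\bar M}$ after rescaling, which only changes constants). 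Hence $\{u_k v\}$ is dominated by an integrable function, $u_k v\to uv$ $\mu$-a.e., and dominated convergence yields $\int u_k v\,d\mu\to\int uv\,d\mu$. This is exactly weak convergence in the $\sigma(L_M,L_{\bar M})$ topology.

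\medskip

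The main obstacle I anticipate is the handling of the test functions $v$: the duality $\sigma(L_M,L_{\bar M})$ pairs $L_M$ against all of $L_{\bar M}$, not just $E_{\bar M}$, and a general $v\in L_{\bar M}$ need not satisfy $\bar M(v)\in L^1$. The fix is the standard Orlicz-space trick: for any $v\in L_{\bar M}$ there is $\kappa>0$ with $\bar M(v/\kappa)\in L^1$, so one applies Young's inequality to $u_k\cdot(v/\kappa)$, gets a uniform integrable majorant $h+\bar M(v/\kappa)$, concludes $\int u_k(v/\kappa)\,d\mu\to\int u(v/\kappa)\,d\mu$, and multiplies back by $\kappa$. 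A secondary technical point is that all of this is along the subsequence for which $w_k\to w$ and the majorant $h$ exist; since the limit $u$ is independent of the subsequence and the weak limit is unique, a standard subsequence-of-subsequence argument promotes the conclusion to the full sequence.
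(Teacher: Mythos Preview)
The paper does not supply its own proof of this lemma; it is quoted verbatim from \cite{Gossez} and used as a black box. There is therefore nothing to compare against, and your argument stands on its own.

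Your proof is correct. Two small remarks. First, the membership $u\in\LL_M$ follows more directly from Fatou's lemma: since $M$ is continuous and nonnegative, $M(u_k)\to M(u)$ $\mu$-a.e., so
\[
\int_{\R^N} M(u)\,d\mu \le \liminf_{k\to\infty}\int_{\R^N} M(u_k)\,d\mu \le \liminf_{k\to\infty}\int_{\R^N} w_k\,d\mu = \int_{\R^N} w\,d\mu <\infty,
\]
with no subsequence needed. Second, your handling of the weak convergence via the majorant $h$ from \cite[Theorem~4.9]{brezis} along a subsequence, the scaling trick $v\mapsto v/\kappa$ to force $\bar M(v/\kappa)\in L^1$, and the subsequence-of-subsequence upgrade to the full sequence, is all sound. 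One could equivalently phrase the core step via the generalized dominated convergence theorem (the paper's Lemma~\ref{conv1}), taking $\Phi_k=\kappa\bigl(w_k+\bar M(v/\kappa)\bigr)$; but since that lemma also asks for $\Phi_k\to\Phi$ $\mu$-a.e., the subsequence extraction is implicit there too, and your route is no less clean.
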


\begin{lem}[\cite{Gossez}, Lemma 1.5]\label{lem1.5}
Let $u\in L_M(\R^N,d\mu)$ and denote by $u_t$ the translated function: $u_t(x) = u(x -t)$. Then, $u_t\to u$ in the $\sigma(L_M(\R^N,d\mu), L_{\bar M}(\R^N,d\mu))$ topology as $t\to 0$.
\end{lem}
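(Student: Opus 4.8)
The plan is to reduce the statement to Lemma~\ref{lem1.4} via the classical continuity of translations in $L^1$. Since the topology $\sigma(L_M,L_{\bar M})$ is generated by the functionals $w\mapsto\int_{\R^N}wv\,d\mu$ with $v\in L_{\bar M}$, and a basic neighborhood of $u$ involves only finitely many of them, it suffices to prove that for each fixed $v\in L_{\bar M}$ one has $\int_{\R^N}(u_t-u)v\,d\mu\to 0$ as $t\to 0$. Moreover, choosing $\lambda>0$ with $\int_{\R^N}M(u/\lambda)\,d\mu<\infty$ and replacing $u$ by $u/\lambda$ (the topology being linear), I may assume $u\in\LL_M(\R^N,d\mu)$ from the outset.

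The core step will be the claim that every sequence $t_k\to 0$ admits a subsequence $t_{k_j}$ with $u_{t_{k_j}}\to u$ in $\sigma(L_M,L_{\bar M})$. To prove it I would first observe that $u\in L^1_{\mathrm{loc}}(d\mu)$ (an Orlicz function is integrable on sets of finite measure, since $M(\tau)\ge\tau$ for $\tau$ large) and that translations act continuously on $L^1$ of the underlying translation-invariant measure; hence $u_{t_k}\to u$ in $L^1_{\mathrm{loc}}$, and along a subsequence $u_{t_{k_j}}\to u$ $\mu$-almost everywhere. Next, $M(u_{t_{k_j}})=(M\circ u)(\,\cdot-t_{k_j})$ is the translate of $g:=M(u)\in L^1(\R^N,d\mu)$, so $M(u_{t_{k_j}})\to g$ in $L^1$ by the same $L^1$-continuity of translation, and in particular each $u_{t_{k_j}}\in\LL_M(\R^N,d\mu)$. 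Lemma~\ref{lem1.4}, applied with $w_j=M(u_{t_{k_j}})$ and $w=g$, then yields $u_{t_{k_j}}\to u$ in $\sigma(L_M,L_{\bar M})$, which is the claim. The lemma follows by the usual subsequence argument: were $\int_{\R^N}(u_{t_k}-u)v\,d\mu$ to fail to tend to $0$ for some $t_k\to 0$, one could pass to a subsequence on which it stays bounded away from $0$, contradicting the claim.

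The one genuine obstacle — and the reason Lemma~\ref{lem1.4} cannot be invoked directly for the whole family $\{u_t\}$ — is that translates of a general measurable function need not converge pointwise almost everywhere; this forces the detour through $L^1_{\mathrm{loc}}$-convergence followed by extraction of an a.e.\ convergent subsequence. The $L^1$-continuity of translation used above is the classical fact for Lebesgue measure on $\R^n$; in the $\R^{2n}$ setting it is the analogous statement for the diagonal translations $(x,y)\mapsto(x+t,y+t)$, which preserve the measure $\nu_n$, and in both cases it follows from the density of compactly supported continuous functions together with their uniform continuity.
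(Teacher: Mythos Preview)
The paper does not supply a proof of this lemma; it is cited from Gossez and used without argument in the proof of Theorem~\ref{teo.density}. Your approach is correct and is the natural one: reduce to $u\in\LL_M$, observe that $M(u_t)=(M(u))_t\to M(u)$ in $L^1$ by the continuity of translation for a translation-invariant measure, extract an a.e.\ convergent subsequence of $u_{t_k}$, and invoke Lemma~\ref{lem1.4} with $w_k=M(u_{t_k})$; the subsequence principle then upgrades this to convergence of the full family. The only point deserving a word of care is the extraction of the a.e.\ subsequence via $L^1_{\mathrm{loc}}$, which presupposes that the measure is locally finite. This is automatic for Lebesgue measure on $\R^n$, while for $\nu_n$ it holds only on compact subsets of $\R^{2n}\setminus\Delta$; since $\nu_n(\Delta)=0$ and the diagonal translations preserve $\R^{2n}\setminus\Delta$, one simply runs the argument there, as you acknowledge in your final paragraph.
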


\begin{lem}[\cite{Gossez}, Lemma 1.6]\label{lem1.6}
Let $u\in L_M(\R^n, dx)$ and denote by $u_\ve$ the regularized function: $u_\ve = u *\rho_\ve$, where $\rho_\ve(x) = \ve^{-n}\rho(x/\ve)$ is the standard mollifier. Then $u_\ve\to u$ in the $\sigma(L_M(\R^n,dx), L_{\bar M}(\R^n,dx))$ topology as $\ve\to 0$.
\end{lem}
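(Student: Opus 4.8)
The plan is to reduce the statement to Lemma~\ref{lem1.5} by regarding $u_\ve$ as a $\rho_\ve$-weighted average of the translates of $u$. Recall that the topology $\sigma(L_M,L_{\bar M})$ on $L_M=L_M(\R^n,dx)$ is generated by the linear functionals $w\mapsto\langle w,v\rangle:=\int_{\R^n}wv\,dx$ with $v\in L_{\bar M}=L_{\bar M}(\R^n,dx)$, each of which is well defined because $\int_{\R^n}|wv|\,dx\le 2\|w\|_M\|v\|_{\bar M}$ by \eqref{young}; hence it suffices to prove that $\langle u_\ve,v\rangle\to\langle u,v\rangle$ as $\ve\to0$ for every fixed $v\in L_{\bar M}$.

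First I would record that $u_\ve\in L_M$, so that the pairing is legitimate. Choosing $k>0$ with $\int_{\R^n}M(u/k)\,dx<\infty$ and applying Jensen's inequality to the probability density $\rho_\ve$ gives $M(u_\ve(x)/k)\le\int_{\R^n}\rho_\ve(y)M(u(x-y)/k)\,dy$; integrating in $x$ and then using Fubini together with the translation invariance of Lebesgue measure yields $\int_{\R^n}M(u_\ve/k)\,dx\le\int_{\R^n}M(u/k)\,dx<\infty$. This is the same Jensen device already used above for the well-definedness of $(-\Delta_m)^s$.

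Next, fixing $v\in L_{\bar M}$ and writing $u_y(x):=u(x-y)$, I would justify Fubini via the bound $\iint_{\R^{2n}}\rho_\ve(y)|u(x-y)||v(x)|\,dx\,dy=\int_{\R^n}\rho_\ve(y)\big(\int_{\R^n}|u_y||v|\,dx\big)\,dy\le 2\|u\|_M\|v\|_{\bar M}<\infty$ (again using \eqref{young} and translation invariance), obtaining
$$
\langle u_\ve,v\rangle=\int_{\R^n}\Big(\int_{\R^n}u(x-y)\rho_\ve(y)\,dy\Big)v(x)\,dx=\int_{\R^n}\rho_\ve(y)\,\langle u_y,v\rangle\,dy.
$$
Since $\int_{\R^n}\rho_\ve(y)\,dy=1$ and $\supp\rho_\ve\subset B_\ve$, this gives
$$
\big|\langle u_\ve,v\rangle-\langle u,v\rangle\big|\le\int_{B_\ve}\rho_\ve(y)\,\big|\langle u_y,v\rangle-\langle u,v\rangle\big|\,dy\le\sup_{|y|\le\ve}\big|\langle u_y,v\rangle-\langle u,v\rangle\big|.
$$
By Lemma~\ref{lem1.5}, $u_y\to u$ in $\sigma(L_M,L_{\bar M})$ as $y\to0$, i.e. $\langle u_y,v\rangle\to\langle u,v\rangle$ as $y\to0$, so the last supremum tends to $0$ as $\ve\to0$; hence $\langle u_\ve,v\rangle\to\langle u,v\rangle$ for every $v\in L_{\bar M}$, which is precisely the claim $u_\ve\to u$ in $\sigma(L_M,L_{\bar M})$.

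I do not expect any genuine obstacle here: the only technical points are the membership $u_\ve\in L_M$ and the legitimacy of Fubini, both fully controlled by the Young inequality \eqref{young} and the translation invariance of $dx$. The entire content is the identity expressing $\langle u_\ve,v\rangle$ as the $\rho_\ve$-average of the map $y\mapsto\langle u_y,v\rangle$, which is continuous at $0$ by Lemma~\ref{lem1.5}, and this transfers that lemma to the desired convergence.
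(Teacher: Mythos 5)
Your proof is correct. Note, however, that the paper does not prove this lemma at all: it is quoted from \cite{Gossez}, and the route the paper actually follows for the analogous two-variable statement, Lemma \ref{lem1.6bis}, is different from yours. There one uses Jensen's inequality to get $M(F_\ve)\le (M(F))_\ve$, shows that $G_\ve\to G$ in $L^1$ for $G\in L^1$, deduces $F_\ve\to F$ a.e.\ (along a subsequence, using the compact support to place $F$ in $L^1$), and then concludes via Lemma \ref{lem1.4}; the same scheme is essentially Gossez's proof of the present lemma. You instead write $u_\ve$ as the $\rho_\ve$-weighted average of the translates $u_y$, justify the interchange of integrals with the H\"older-type inequality coming from \eqref{young} together with translation invariance of $dx$, and obtain
\begin{equation*}
\bigl|\langle u_\ve,v\rangle-\langle u,v\rangle\bigr|\le \sup_{|y|\le\ve}\bigl|\langle u_y,v\rangle-\langle u,v\rangle\bigr|,
\end{equation*}
which tends to $0$ by Lemma \ref{lem1.5}. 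This is a genuinely different and arguably cleaner reduction: it needs no a.e.\ convergence, no domination lemma, and no subsequence extraction, and it gives convergence of the whole family $\ve\to0$ directly, with the absence of $\Delta_2$ playing no role beyond what Lemma \ref{lem1.5} already encodes. What the paper's (Gossez's) route buys in exchange is portability: it transfers verbatim to the measure $\nu_n$ in Lemma \ref{lem1.6bis}, where your averaging argument would require a diagonal-translation analogue of Lemma \ref{lem1.5} for $L_M(\R^{2n},d\nu_n)$ that is not stated in the paper, whereas the Jensen--plus--Lemma \ref{lem1.4} scheme only needs $L^1$-continuity of translations and the compact-support hypothesis.
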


Now, we need a modification of Lemma \ref{lem1.6} to deal with functions $F\in L_M(\R^{2n}, d\nu_n)$.

\begin{lem}\label{lem1.6bis}
Let $F\in L_M(\R^{2n}, d\nu_n)$ and denote by $F_\ve$ 
$$
F_\ve(x,y) = \int_{\R^n} F(x-z,y-z)\rho_\ve(z)\, dz,
$$ 
where $\rho_\ve(z) = \ve^{-n}\rho(z/\ve)$ is the standard mollifier. Then, if $F$ has compact support, $F_\ve\to F$ as $\ve\to 0$ in the $\sigma(L_M(\R^{2n},d\nu_n), L_{\bar M}(\R^{2n},d\nu_n))$ topology.
\end{lem}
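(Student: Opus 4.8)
The plan is to mimic the classical argument behind Lemma \ref{lem1.6}, but keeping careful track of the measure $d\nu_n = dx\,dy/|x-y|^n$, which behaves well precisely because the mollification acts by translating both variables by the \emph{same} vector $z$. First I would observe that the key algebraic point is that $D^s$ commutes with simultaneous translation: if $\tau_z u(x) := u(x-z)$, then $D^s(\tau_z u)(x,y) = (\tau_z u(x) - \tau_z u(y))/|x-y|^s$ and, since $|(x-z)-(y-z)| = |x-y|$, this equals $(D^su)(x-z,y-z)$. In particular the change of variables $(x,y)\mapsto(x+z,y+z)$ leaves $d\nu_n$ invariant, so for any fixed $z$ the map $F\mapsto F(\cdot-z,\cdot-z)$ is an isometry of $L_M(\nu_n)$ (indeed of $L_M(\nu_n)$ onto itself, preserving the Luxemburg norm). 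This is the analogue, in the $\nu_n$ setting, of translation invariance of Lebesgue measure, and it is what makes the mollification estimates go through.

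Next I would reduce the weak-$*$ convergence $F_\ve\to F$ for $\sigma(L_M(\nu_n),L_{\bar M}(\nu_n))$ to a pointwise-plus-domination statement via Lemma \ref{lem1.4} (stated for a general measure space $(\R^N,d\mu)$, here $N=2n$, $d\mu = d\nu_n$). So it suffices to show: (i) $F_\ve\to F$ $\nu_n$-a.e. along a subsequence $\ve\to 0$, and (ii) there is a uniform domination $M(F_\ve/\lambda)\le w_\ve$ with $w_\ve\to w$ in $L^1(\nu_n)$, for some fixed $\lambda>0$. For (ii), fix $\lambda$ so that $\iint M(F/\lambda)\,d\nu_n<\infty$ (possible since $F\in L_M(\nu_n)$); by Jensen's inequality applied to the probability measure $\rho_\ve(z)\,dz$ and the convex function $M$,
\begin{equation*}
M\!\left(\frac{F_\ve(x,y)}{\lambda}\right) \le \int_{\R^n} M\!\left(\frac{F(x-z,y-z)}{\lambda}\right)\rho_\ve(z)\,dz =: w_\ve(x,y).
\end{equation*}
Writing $g(x,y):=M(F(x,y)/\lambda)\in L^1(\nu_n)$, we have $w_\ve(x,y) = \int g(x-z,y-z)\rho_\ve(z)\,dz$, i.e. $w_\ve$ is the same "diagonal mollification" applied to $g\in L^1(\nu_n)$. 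Using the isometry property above (now for $L^1(\nu_n)$, again via invariance of $\nu_n$ under diagonal translation) together with the hypothesis that $F$ — hence $g$ — has compact support, a standard Minkowski-integral-inequality argument gives $w_\ve\to g$ in $L^1(\nu_n)$ as $\ve\to 0$. Here the compact support is used to stay away from the diagonal $\Delta$ uniformly for small $\ve$, so that the weight $|x-y|^{-n}$ is bounded on the relevant region and the $L^1(\nu_n)$-mollification estimate reduces to an ordinary $L^1(dx\,dy)$ one; this is the analogue of Lemma \ref{lem1.6} on $\R^{2n}$ with a bounded weight.

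For (i), pointwise a.e.\ convergence $F_\ve\to F$ follows from the $L^1(\nu_n)$ convergence of a related quantity, or more directly: on the compact set where $F$ is supported (enlarged by a fixed amount to absorb the translations for $\ve\le\ve_0$), $\nu_n$ is a finite measure comparable to Lebesgue measure, so $F\in L^1$ there, and the classical Lebesgue differentiation / mollifier theorem gives $F_\ve\to F$ a.e.\ along a subsequence. Combining (i) and (ii) with Lemma \ref{lem1.4} yields $F\in\LL_M(\nu_n)$ (already known) and $F_\ve\to F$ in $\sigma(L_M(\nu_n),L_{\bar M}(\nu_n))$, as claimed. The main obstacle — and the only place the argument genuinely differs from the classical Lemma \ref{lem1.6} — is controlling the singular weight $|x-y|^{-n}$ during the mollification: the resolution is exactly the two structural facts noted above, namely that diagonal translation preserves $\nu_n$ and that compact support of $F$ keeps everything away from the diagonal, so that on the support the weight is harmless.
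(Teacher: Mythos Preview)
Your overall strategy matches the paper's: reduce to Lemma~\ref{lem1.4} by combining (a) Jensen's inequality $M(F_\ve/\lambda)\le \big(M(F/\lambda)\big)_\ve$, (b) $L^1(\nu_n)$-convergence of the diagonally mollified function $\big(M(F/\lambda)\big)_\ve$, and (c) a.e.\ convergence $F_\ve\to F$ along a subsequence. The paper makes the same reduction to $F\in\LL_M(\nu_n)$ at the outset.

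However, your justification of step (b) contains a misconception. You write that ``compact support is used to stay away from the diagonal $\Delta$ uniformly for small $\ve$, so that the weight $|x-y|^{-n}$ is bounded on the relevant region.'' This is not true in general: a compact subset of $\R^{2n}$ can perfectly well meet $\Delta$, and nothing in the hypothesis prevents $\supp F$ from doing so. The same objection applies to your argument for (i), where you assert that on the support ``$\nu_n$ is a finite measure comparable to Lebesgue measure.'' The paper does \emph{not} argue this way. Instead it proves directly that for \emph{any} $G\in L^1(\R^{2n},d\nu_n)$ one has $G_\ve\to G$ in $L^1(\nu_n)$, via Fubini and the continuity of the map $z\mapsto G(\cdot-z,\cdot-z)$ in $L^1(\nu_n)$ --- which follows precisely from the diagonal-translation invariance of $\nu_n$ that you correctly identified at the start, with no need to bound the weight pointwise. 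Compact support enters the paper's proof only once: to pass from $F\in\LL_M(\nu_n)$ to $F\in L^1(\nu_n)$, so that this general $L^1$-mollification result can be applied to $F$ itself, yielding $F_\ve\to F$ in $L^1(\nu_n)$ and hence a.e.\ along a subsequence. So your ingredients are right, but the role you assign to the compact-support hypothesis is off; drop the ``away from the diagonal'' mechanism and invoke translation continuity in $L^1(\nu_n)$ directly.
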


\begin{proof}
The proof of this lemma is a modification of Lemma \ref{lem1.6}. In fact, first observe that it is enough to prove the lemma in the case where $F\in \LL_M(\R^{2n}, d\nu_n)$. Then, using Jensen's inequality, one can easily verify that
$$
M(F_\ve) \le M(F)_\ve = \int_{\R^n} M(F(x-z,y-z))\rho_\ve(z)\, dz.
$$
So, to finish the proof it remains to see that $M(F)_\ve\to M(F)$ in $L^1(\R^{2n}, d\nu_n)$, that $F_\ve\to F$ $\nu_n-$a.e. and apply Lemma \ref{lem1.4}.

First, observe that if $G\in L^1(\R^{2n},d\nu_n)$ then $G_\ve\to G$ in $L^1(\R^{2n},d\nu_n)$. In fact,
\begin{align*}
\iint_{\R^{2n}} |G_\ve - G|\, d\nu_n&\le \iint_{\R^{2n}} |G(x-z,y-z)-G(x,y)|\rho_\ve(z)\, dz\, d\nu_n(x,y)\\
&=\int_{|z|\le \ve} \rho_\ve(z)\left(\iint_{\R^{2n}}|G(x-z,y-z)-G(x,y)|\, d\nu_n(x,y)\right)\, dz.
\end{align*}
From this inequality, the result follows by the continuity of the $L^1-$norm.

Applying this result to $G=M(F)_\ve$ gives that $M(F)_\ve\to M(F)$ in $L^1(\R^{2n}, d\nu_n)$, Finally, observe that since $F$ has compact support, $F\in \LL_M(\R^{2n}, d\nu_n)$ implies that $F\in L^1(\R^{2n}, d\nu_n)$, then if we apply the same result to $G=F$ we get that $F_\ve\to F$ in $L^1(\R^{2n}, d\nu_n)$, so passing to a subsequence $\ve_k\to 0$ if necessary, we get the desired result.
\end{proof}

Now we are ready to prove the main result of this appendix
\begin{proof}[Proof of Theorem \ref{teo.density}]
Let $u\in W^s_0L_M(\Omega)$. We can assume, without loss of generality, that $u\in W^s_0L_M(\R^n)$ and that $u=0$ in $\R^n\setminus \Omega$.

Now, using Lemma \ref{lem1.4}, the segment property of $\Omega$, and observing that $D^su_t = (D^su)_t$, we can argue exactly as in the proof of \cite[Theorem 1.3]{Gossez} and assume, without loss of generality, that $u$ has compact support in $\Omega$.

Now, we can regularize $u$ by convolution $u_\ve=u*\rho_\ve$ and apply Lemmas \ref{lem1.6} and \ref{lem1.6bis} to conclude the desired result.
\end{proof}

\section{A Rellich-Kondrachov type result}
In this appendix we prove a Rellich-Kondrachov compactness result for the inclusion $W_0^s L_M(\Omega)\subset L_M(\Omega)$. In the case where $M$ satisfies the $\Delta_2-$condition this result was proof in \cite[Theorem 3.1]{FBS}.

It is worth mention that in \cite{ACPS2} optimal embeddings of the form $W^sL_M(\Omega)\subset L_N(\Omega)$ were obtained when $\Omega$ is Lipschitz and $M$ satisfies some subcritical conditions. See Theorem 9.1 in \cite{ACPS2}. 

The purpose of this appendix is to obtain the compact embedding result of \cite{FBS} whitout requiring the $\Delta_2-$condition on $M$. We want to stress that the main ideas in order to accomplish this task are taken from \cite{ACPS2}.
\begin{lem}
For all $u\in W^sL_M$ and $|h|<1/2$ we have
$$
\int_{\R^n}M\left (|u(x+h)-u(x)|\right)\,dx\leq \frac{2^{n+1}}{\omega_n}\iint_{\R^{2n}}M (2^{s+1}|h|^sD^s u)\,d\nu_n.
$$	
\end{lem}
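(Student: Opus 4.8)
The plan is to express the finite difference $u(x+h)-u(x)$ as a sum of two fractional Hölder quotients by inserting an intermediate point $z$, to average over $z$ in a ball, and then to use only the convexity and monotonicity of the Young function $M$ together with Tonelli's theorem.

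First I would fix $x\in\R^n$ and $h$ with $0<|h|<\tfrac12$ and set $B:=B_{|h|}(x)$, so $|B|=\omega_n|h|^n$. For $z\in B$ one has $|z-x|\le|h|$ and $|z-(x+h)|\le2|h|$, and $|u(x+h)-u(x)|\le|u(x+h)-u(z)|+|u(z)-u(x)|$. Since $M$ is even and convex, $M(a+b)\le\tfrac12M(2a)+\tfrac12M(2b)$; averaging this over $z\in B$, and then bounding $2|u(x+h)-u(z)|=2|x+h-z|^s|D^su(x+h,z)|\le2^{s+1}|h|^s|D^su(x+h,z)|$ and $2|u(z)-u(x)|=2|z-x|^s|D^su(z,x)|\le2^{s+1}|h|^s|D^su(z,x)|$ (using monotonicity of $M$), gives the pointwise estimate
$$
M(|u(x+h)-u(x)|)\le\frac{1}{2\omega_n|h|^n}\int_B\big(M(2^{s+1}|h|^s|D^su(x+h,z)|)+M(2^{s+1}|h|^s|D^su(z,x)|)\big)\,dz.
$$

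Next I would integrate this in $x$ over $\R^n$ and apply Tonelli's theorem (the integrands are nonnegative, so this is legitimate and the inequality is understood in $[0,+\infty]$). Setting $H(\eta):=\int_{\R^n}M\big(2^{s+1}|h|^s|u(x+\eta)-u(x)|\,|\eta|^{-s}\big)\,dx$ for $\eta\neq0$, the change of variables $y=x+\eta$ gives $\iint_{\R^{2n}}M(2^{s+1}|h|^s|D^su|)\,d\nu_n=\int_{\R^n}H(\eta)|\eta|^{-n}\,d\eta$. In the term containing $D^su(z,x)$ I substitute $\eta=z-x$ and use Tonelli to get $\int_{B_{|h|}(0)}H(\eta)\,d\eta$; in the term containing $D^su(x+h,z)$ I substitute $\eta=z-(x+h)$ and translate $x$ to get $\int_{B_{|h|}(-h)}H(\eta)\,d\eta$. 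Since $|\eta|\le2|h|$ on both of these balls, I have $1\le(2|h|)^n|\eta|^{-n}$ there, so each of these integrals is bounded by $2^n|h|^n\int_{\R^n}H(\eta)|\eta|^{-n}\,d\eta$; adding them and dividing by $2\omega_n|h|^n$ yields
$$
\int_{\R^n}M(|u(x+h)-u(x)|)\,dx\le\frac{2^n}{\omega_n}\iint_{\R^{2n}}M(2^{s+1}|h|^s\,|D^su|)\,d\nu_n,
$$
which is in fact slightly stronger than, and therefore implies, the claimed inequality.

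The main obstacle — really the only nontrivial bookkeeping — is the treatment of the ``mixed'' term $\int_BM(2^{s+1}|h|^s|D^su(x+h,z)|)\,dz$: after the substitution $\eta=z-(x+h)$ and a translation in $x$ it becomes a Gagliardo-type integral, but over the off-center ball $B_{|h|}(-h)$ instead of $B_{|h|}(0)$; the point is that $B_{|h|}(-h)\subset B_{2|h|}(0)$, so the weight $|\eta|^{-n}$ can be reinserted at the cost of the factor $2^n$. Everything else reduces to the convexity and monotonicity of $M$ and the translation invariance of Lebesgue measure.
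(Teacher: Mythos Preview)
Your proof is correct and in fact yields the slightly sharper constant $2^n/\omega_n$. The route is genuinely different from the paper's. You split the increment via the triangle inequality and the convexity estimate $M(a+b)\le\tfrac12M(2a)+\tfrac12M(2b)$, then average over the whole ball $B_{|h|}(x)$; the paper instead uses a ``good set'' argument: for each fixed $x$ it notes that $B_{|h|}(x)$ is covered by $S_1=\{y:|u(x+h)-u(y)|\ge\tfrac12|u(x+h)-u(x)|\}$ and $S_2=\{y:|u(x)-u(y)|\ge\tfrac12|u(x+h)-u(x)|\}$, so one of them has at least half the measure of the ball, and on that set only \emph{monotonicity} of $M$ is needed to bound $M(|u(x+h)-u(x)|)$ by an average of $M(2|u(\cdot)-u(\cdot)|)$. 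Your approach is cleaner in that it treats both terms symmetrically and avoids the case distinction (which in the paper is phrased as a ``without loss of generality'' that in fact depends on $x$); the paper's argument has the mild advantage of using only monotonicity of $M$ rather than convexity, but since $M$ is a Young function this makes no difference here.
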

\begin{proof}
Let $x,h\in \R^n$ with $|h|<1/2$ and $u\in W^sL_M$. We define the following sets 
\begin{align*}
S_1&=
 \left\{y\in B_{|h|}(x)\colon |u(x+h)-u(y)|\geq \frac{1}{2} |u(x+h)-u(x)|\right \},
 \\
S_2&= \left \{y\in B_{|h|}(x)\colon |u(x)-u(y)|\geq \frac{1}{2} |u(x+h)-u(x)|\right \}.
\end{align*}
Then $B_{|h|}(x)\subset S_1\cup S_2$. Therefore it follows that 
$$
|S_1|\geq \frac{1}{2} |B_{|h|}(x)|\quad \text{ or }\quad |S_2|\geq \frac{1}{2} |B_{|h|}(x)|.
$$
Without loss of generality we may assume that 
$$
\frac{1}{2}\omega_n |h|^n\leq |S_1|\leq \omega_n |h|^n.
$$
Hence we have
\begin{align*}
\int_{\R^n} M\left(|u(x+h)-u(x)| \right )\,dx&=\int_{\R^n}\frac{1}{|S_1|}\int_{S_1}M\left(|u(x+h)-u(x)| \right )\,dy\,dx
\\
&\leq \frac{2}{\omega_n|h|^n}\iint_{\R^n\times B_{|h|}(x)}M(2|u(x+h)-u(y)|)\,dy\,dx.
\end{align*}
The last integral is bounded by 
\begin{align*}
&\frac{2}{\omega_n|h|^n}\iint_{\R^n\times B_{|h|}(x)}M\left (2\frac{|u(x+h)-u(y)|}{|x+h-y|^s}|x+h-y|^s\right )|x+h-y|^n\frac{dydx}{|x+h-y|^n}
\\
&\leq  \frac{2^{n+1}}{\omega_n}\iint_{\R^{2n}}M\left ( 2^{s+1}|h|^s\frac{|u(x)-u(y)|}{|x-y|^s}\right )\,d\nu_n.
\end{align*}
This finish the proof.
\end{proof}
\begin{cor}\label{B2}
There exists a constant $C=C(n,s)>0$ such that
$$
\|\tau_h u-u\|_M\leq C|h|^s\|D^s u\|_{M,\nu_n},
$$ 	
for every $u\in W^sL_M$ and $|h|<1/2$
\end{cor}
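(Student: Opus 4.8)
The plan is to deduce Corollary \ref{B2} directly from the preceding lemma by a standard scaling argument relating the Luxemburg norm to the modular inequality. First I would unwind the definition of the Luxemburg norm: to bound $\|\tau_h u - u\|_M$ it suffices to produce a number $\lambda>0$ with $\int_{\R^n} M\big((\tau_h u - u)/\lambda\big)\,dx \le 1$, and then $\|\tau_h u - u\|_M \le \lambda$. The natural candidate comes from the lemma applied not to $u$ but to $u/\lambda$, since $D^s(u/\lambda) = (D^su)/\lambda$. This reduces everything to choosing $\lambda$ so that the right-hand side of the lemma, namely $\frac{2^{n+1}}{\omega_n}\iint_{\R^{2n}} M\big(2^{s+1}|h|^s (D^su)/\lambda\big)\,d\nu_n$, is at most $1$.

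The key step is the choice of $\lambda$. Set $\kappa := \|D^su\|_{M,\nu_n}$, so by definition of the Luxemburg norm on $L_M(\nu_n)$ we have $\iint_{\R^{2n}} M\big((D^su)/\kappa\big)\,d\nu_n \le 1$. I would then pick $\lambda$ so that $2^{s+1}|h|^s/\lambda \le c \cdot \frac{1}{\kappa}$ for a small constant $c$, i.e. $\lambda = C|h|^s \kappa$ with $C$ chosen large enough (depending only on $n$ and $s$) to absorb the factor $\frac{2^{n+1}}{\omega_n}$. Concretely, by convexity of $M$ and $M(0)=0$ one has $M(tr) \le t M(r)$ for $t \in [0,1]$, so if $2^{s+1}|h|^s/\lambda \le \theta/\kappa$ with $\theta \in (0,1)$ then
$$
\frac{2^{n+1}}{\omega_n}\iint_{\R^{2n}} M\left(2^{s+1}|h|^s\,\frac{D^su}{\lambda}\right)d\nu_n
\le \frac{2^{n+1}}{\omega_n}\,\theta \iint_{\R^{2n}} M\left(\frac{D^su}{\kappa}\right)d\nu_n
\le \frac{2^{n+1}}{\omega_n}\,\theta.
$$
Choosing $\theta = \omega_n 2^{-n-1}$ makes this quantity $\le 1$, which forces $\lambda = 2^{s+1}|h|^s\kappa/\theta = 2^{2n+s+2}\omega_n^{-1}|h|^s\kappa$ to be an admissible competitor, giving $\|\tau_h u - u\|_M \le C|h|^s\|D^su\|_{M,\nu_n}$ with $C = C(n,s)$.

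The only mild subtlety — and the step I would be most careful about — is making sure the lemma is genuinely applicable to $u/\lambda$: the lemma's hypothesis is just $u \in W^sL_M$ and $|h|<1/2$, and $W^sL_M$ is a linear space, so $u/\lambda \in W^sL_M$ for any $\lambda>0$, and the condition $|h|<1/2$ is untouched. One should also handle the degenerate case $\|D^su\|_{M,\nu_n}=0$ separately (then $D^su=0$ $\nu_n$-a.e., forcing $u$ constant, hence $u=0$ since $u\in L_M$ and the inequality is trivial), and note the convexity inequality $M(tr)\le tM(r)$ for $t\in[0,1]$ which is immediate from $M$ convex with $M(0)=0$. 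No genuine obstacle arises here; it is purely a bookkeeping argument converting the modular estimate of the lemma into a norm estimate.
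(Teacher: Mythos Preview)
Your proposal is correct and follows essentially the same route as the paper: apply the lemma to $u/\lambda$ with $\lambda$ a suitable constant times $|h|^s\|D^su\|_{M,\nu_n}$, then use convexity of $M$ (via $M(tr)\le tM(r)$ for $t\in[0,1]$) to absorb the factor $2^{n+1}/\omega_n$ and conclude from the definition of the Luxemburg norm. The paper's choice is $\lambda=2^{s+1}A|h|^s\|D^su\|_{M,\nu_n}$ with $A=\max\{1,2^{n+1}/\omega_n\}$, which is exactly your idea; your explicit treatment of the degenerate case $\|D^su\|_{M,\nu_n}=0$ and the minor arithmetic slip in the final constant (it should be $2^{n+s+2}\omega_n^{-1}$, not $2^{2n+s+2}\omega_n^{-1}$) do not affect the argument.
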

\begin{proof}
Take $\lambda=\|D^s u\|_{M,\nu_n}2^{s+1}|h|^s A$ where $A=\max\{1,2^{n+1}/\omega_n\}$, and apply the previous lemma to the function $u/\lambda$. And we get
\begin{align*}
\int_{\R^n} M\left ( \frac{|u(x+h)-u(x)|}{\lambda}\right )\,dx&\leq \frac{2^{n+1}}{\omega_n}\iint_{\R^{2n}}M\left (\frac{D^s u}{\|D^s u\|_{M,\nu_n}A} \right )
\\
&\leq 1.
\end{align*}
This finish the proof taking $C=2^{s+1}A$.
\end{proof}
Whit these preliminaries we are ready to prove the main result of this appendix.	
\begin{thm}\label{teo.compact}
Let $\Omega\subset \R^n$ be a bounded	domain that satisfies the segment property and let $M$ be a Young function. Then the inclusion $W_0^sL_M(\Omega)\subset E_M(\Omega)$ is compact. That is if $\{u_k\}_{k\in\N}\subset W_0^s L_M(\Omega)$ is bounded, there exists $u\in E_M(\Omega)$ and a subsequence $\{u_{k_j}\}_{j\in \N}\subset \{u_k\}_{k\in\N}$ such that $u_{k_j}\to u$ in $E_M(\Omega)$ as $j\to\infty$. 
\end{thm}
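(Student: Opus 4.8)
The plan is to prove a Fr\'echet--Kolmogorov--Riesz type compactness criterion in the Luxemburg norm, using Corollary~\ref{B2} as the source of equicontinuity under translations. The main obstacle is that, without the $\Delta_2$ condition, convolution is \emph{not} norm continuous on $L_M$, so the usual step ``regularize and extract a uniformly convergent subsequence'' is not immediately available; the remedy is that the fractional regularity recorded in Corollary~\ref{B2} upgrades the mollification convergence into a \emph{quantitative} norm estimate that is uniform over the bounded family, which is exactly what makes the classical truncation-by-mollification argument go through. A secondary point to watch is that the limit must be placed in $E_M(\Omega)$, not merely in $L_M(\Omega)$.

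\emph{Setup.} First I would record two elementary facts. Every $u\in W^s_0L_M(\Omega)$ is a function on $\R^n$ vanishing a.e. outside $\Omega$ and $W^s_0L_M(\Omega)\subset W^sL_M$, so Corollary~\ref{B2} applies to it. Since $\Omega$ is bounded and $M(t)/t\to\infty$, one has the continuous inclusion $L_M(\Omega)\hookrightarrow L^1(\Omega)$ (H\"older's inequality \eqref{young} against $1\in L_{\bar M}(\Omega)$), and moreover on a set of finite measure bounded functions belong to $E_M$ and $L^\infty$-convergence implies convergence in the Luxemburg norm. Now let $\{u_k\}_{k\in\N}\subset W^s_0L_M(\Omega)$ be bounded; set $R:=\sup_k\|D^su_k\|_{M,\nu_n}<\infty$, using the equivalent norm $\|D^su\|_{M,\nu_n}$ on $W^s_0L_M(\Omega)$, whence also $\sup_k\|u_k\|_{L^1(\Omega)}<\infty$ by Poincar\'e's inequality. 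For $0<\ve<1/2$ put $u_k^\ve:=u_k*\rho_\ve$, with $\rho_\ve$ the standard mollifier.

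\emph{The uniform mollification estimate and compactness for fixed $\ve$.} By Corollary~\ref{B2}, $\|\tau_z u_k-u_k\|_M\le C|z|^sR\le C\ve^sR$ for all $|z|\le\ve$ and all $k$, hence $\int_{\R^n}M\big((\tau_zu_k-u_k)/(C\ve^sR)\big)\,dx\le1$ for each such $z$; by Jensen's inequality with the probability density $\rho_\ve$ and the convexity of $M$,
\[
\int_{\R^n}M\Big(\frac{u_k^\ve(x)-u_k(x)}{C\ve^sR}\Big)\,dx\le\int_{\R^n}\rho_\ve(z)\Big(\int_{\R^n}M\Big(\frac{u_k(x-z)-u_k(x)}{C\ve^sR}\Big)\,dx\Big)\,dz\le1,
\]
so that $\|u_k^\ve-u_k\|_M\le C\ve^sR$ for all $k$, with $C=C(n,s)$ independent of $k$ and $\ve$. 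On the other hand, for fixed $\ve$ all $u_k^\ve$ are supported in a fixed ball $B$, with $\|u_k^\ve\|_{L^\infty}\le\|\rho_\ve\|_{L^\infty}\sup_k\|u_k\|_{L^1}$ and $\|\D u_k^\ve\|_{L^\infty}\le\|\D\rho_\ve\|_{L^\infty}\sup_k\|u_k\|_{L^1}$, so by Arzel\`a--Ascoli $\{u_k^\ve\}_k$ is precompact in $C(\overline B)$, hence in $L^\infty(B)$, hence in $L_M(\R^n)$ by the preliminaries.

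\emph{Extraction and identification of the limit.} Given $\delta>0$, pick $\ve$ with $C\ve^sR<\delta/3$ and cover the precompact family $\{u_k^\ve\}_k$ by finitely many $L_M$-balls of radius $\delta/3$; by the uniform estimate the $u_k$ are then covered by finitely many $L_M$-balls of radius $\delta$, so $\{u_k\}$ is totally bounded in $L_M(\R^n)$. By completeness a subsequence $u_{k_j}$ converges in the Luxemburg norm to some $u$ vanishing a.e. outside $\Omega$. To see that $u\in E_M(\Omega)$, fix $\ve$; along a further subsequence $u_{k_j}^\ve\to v_\ve$ in $L_M$-norm with $v_\ve\in E_M$ (a bounded function on a bounded set, and $E_M$ is closed in $L_M$); since $\|v_\ve-u\|_M=\lim_j\|u_{k_j}^\ve-u_{k_j}\|_M\le C\ve^sR$, letting $\ve\to0$ places $u$ in the closure of $E_M$, i.e. $u\in E_M(\Omega)$. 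Restricting to $\Omega$ we get $u_{k_j}\to u$ in $E_M(\Omega)$, which is the assertion. As stressed above, the whole argument hinges on the uniform estimate $\|u_k^\ve-u_k\|_M\le C\ve^sR$ extracted from Corollary~\ref{B2}, together with the final bookkeeping that keeps the limit inside $E_M$ rather than only in $L_M$.
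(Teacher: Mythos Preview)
Your argument is correct and follows the same strategy as the paper: use Corollary~\ref{B2} to obtain uniform equicontinuity under translations, and then derive precompactness in $E_M(\Omega)$ via a Fr\'echet--Kolmogorov--Riesz type argument. The only difference is presentational: the paper invokes \cite[Theorem~11.4]{KR} directly for the compactness criterion, whereas you have written that criterion out in full (mollification, the uniform estimate $\|u_k^\ve-u_k\|_M\le C\ve^sR$ via Jensen, Arzel\`a--Ascoli on the mollified family, total boundedness, and the closing check that the limit lies in $E_M$). Your version is more self-contained; the paper's is shorter by outsourcing to a standard reference.
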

\begin{proof}
With the help of Corollary \ref{B2} the proof of the theorem is an immediate consequence of \cite[Theorem 11.4]{KR}.
\end{proof}
 
\subsection*{Acknowledgements.} This work was partially supported by CONICET under grant  PIP 11220150100032CO and PIP 11220210100238CO, by ANPCyT under grants PICT 2019-3837 and PICT 2019-3530 and by University of San Luis under PROICO 03-2023.  Both authors are members of CONICET and are grateful for the support.

During the creation of this work, the second author benefitted from the collaborative environment at Instituto de Calculo -- CONICET, where a portion of the research was conducted. The gracious hospitality extended by the first author, along with the conducive work atmosphere, is sincerely acknowledged and greatly valued.

\bibliographystyle{amsplain}
\bibliography{biblio}

\end{document}